\newtheorem{theorem}{Theorem}[section]
\newtheorem{prop}[theorem]{Proposition}
\newtheorem{cor}[theorem]{Corollary}
\newtheorem{lem}[theorem]{Lemma}
\newtheorem{defn}[theorem]{Definition}
\theoremstyle{remark}
\newtheorem{rem}[theorem]{Remark}
\def\o{\otimes}
\def\b{\beta}
\def\a{\alpha}
\def\mc{\mathcal}
\def\lan{\langle}
\def\ran{\rangle}
\def\d{\delta}
\def\g{\gamma}
\def\&{\,\,\mbox{and}\,\,}
\def\S{\Sigma}
\def\i{\infty}
\def\G{\Gamma}
\def\Z{\mathbb{Z}}
\def\RP1{\mathbb{R}P^1}
\def\Nn{\mathbb{N}_n}
\begin{document}
\title{Fukaya category of surfaces and mapping class group action}
\author{Haniya Azam$^{1}$, Christian Blanchet$^{2}$}
\address{$^{1}$Department of Mathematics, SBA School of Science and Engineering, LUMS, Opposite sector U, DHA, Lahore-54792, Pakistan}
\email {haniya.azam@lums.edu.pk}
\address{$^{2}$ Univ Paris Diderot, IMJ-PRG, UMR 7586 CNRS,  F-75013, Paris, France}
\email{Christian.Blanchet@imj-prg.fr}

\keywords{Fukaya category, K-theory, mapping class groups of surfaces}

\maketitle
\pagestyle{myheadings} \markboth{\centerline {\scriptsize
H. AZAM, C. BLANCHET }}{\centerline {\scriptsize
Fukaya category of surfaces and mapping class group action}}

\textwidth=13cm

\pagestyle{myheadings}

\begin{abstract}
We construct the  Fukaya category of a surface with genus greater than one
and compute its Grothendieck group. In this paper we consider a topological variant, in which we disregard the area form and use instead an admissibility condition borrowed from Heegaard-Floer theory which ensures invariance under isotopy. 
 We also study action of the mapping class group of the surface on its Fukaya category and establish faithfulness.
\end{abstract}
\maketitle
\section{Introduction}

The Fukaya category was introduced by Fukaya in his work on Morse homotopy, $A_{\infty}$-categories and Floer homologies in \cite{fukaya1993morse}. Kontsevich's homological mirror symmetry conjecture \cite{kontsevich1994homological}, kindled wide interest in the Fukaya category which relates it to the category of coherent sheaves on a mirror manifold.

Let $\S$ be a closed connected surface of genus higher than one.
The Fukaya category of $\S$ equipped with an area form has been constructed by Abouzaid in \cite{abouzaid2008fukaya}
 who  also defined a derived extension using twisted complexes and computed its Grothendieck group.
	 Our treatment here follows Abouzaid's work, but without using the area form. We give a topological variant of the Fukaya category and show that the count of objects in the moduli space is finite, in the absence of the symplectic form.

For the genus two curve studied by Seidel in \cite{seidel2011homological} from the perspective of homological mirror symmetry, the Fukaya category is defined using `balanced curves'. Both constructions of the Fukaya category of a surface $\S$ in \cite{abouzaid2008fukaya} and \cite{seidel2011homological} use an auxilliary choice of a one-form on the unit tangent bundle of $\S$, whose differential equals the pullback of the symplectic form. In our topological variant we use `admissible closed curves' which makes it a model intrinsic to the topology of the surface, independent of any auxilliary choices. This ensures topological invariance and functoriality without further discussion.

 De Silva, Robbin and Salamon in their memoir \cite{de2014combinatorial} consider what they call combinatorial Floer homology, using non-isotopic curves and the differential counting smooth lunes. They give a one-one correspondence between smooth lunes (upto homotopy) and index one holomorphic strips (upto translation) showing that the combinatorial and analytic descriptions of Floer homology agree. The Floer homology for admissible curves defined in this paper is more general than their setup with non-isotopic curves. In addition to this we also describe higher products and show that all this coincides with the analytic Floer-Fukaya definitions.

 Basic objects in this paper are
oriented immersed curves which are {\em unobstructed} (that is, such a curve lifts to a properly embedded line in the universal cover);
the sign of intersection defines an absolute $\Z/2\Z$-grading on all Floer complexes.
We describe the Grothendieck group of the topological version of the derived Fukaya category of $\S$ and prove the following theorem:
\begin{theorem}
For a closed oriented surface $\S$ of genus greater than one, the Grothendieck group $K_0(Fuk(\S))$ of the derived topological Fukaya category of $\S$ is isomorphic to $$H_1(S\S;\mathbb{Z}),$$ where $S\S$ is the unit tangent bundle of $\S$.
\end{theorem}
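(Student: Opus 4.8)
The plan is to realise the isomorphism geometrically, via the tautological lift of a curve to the unit tangent bundle, matching the two sides through their natural presentations. Given an unobstructed oriented immersed curve $\g\colon S^1\to\S$, its lift $\tilde\g\colon S^1\to S\S$, $t\mapsto\g'(t)/|\g'(t)|$, is a smooth $1$-cycle; set $\Phi(\g)=[\tilde\g]\in H_1(S\S;\Z)$. The first step is to promote $\Phi$ to a homomorphism $\bar\Phi\colon K_0(Fuk(\S))\to H_1(S\S;\Z)$. As every object of the derived Fukaya category is a twisted complex built from curves, and the class of a twisted complex is the signed sum of the classes of its entries, $K_0(Fuk(\S))$ is generated by the classes $[\g]$ of curves; so it suffices to see that $\Phi$ kills the defining relations. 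Invariance under isotopy is immediate, an isotopy of curves being in particular a regular homotopy, so that $\tilde\g$ stays in its free homotopy class in $S\S$. For additivity over exact triangles one invokes the description of the $A_\infty$-operations recalled above: it is enough to treat the mapping-cone triangles $\g_1\xrightarrow{p}\g_2\to\g_3\to\g_1[1]$ attached to a single transverse intersection point $p$ of two curves, for which $\g_3$ is the oriented smoothing of $\ov\g_1\cup\g_2$ at $p$, the orientation reversal on $\g_1$ coming from the shift. Near $p$ the two arcs created by the oriented smoothing turn by opposite amounts, so the lift is unchanged in homology: $[\tilde\g_3]=[\tilde\g_2]+[\widetilde{\ov\g}_1]$ in $H_1(S\S;\Z)$; equivalently $\Phi(\g_3)=\Phi(\g_2)-\Phi(\g_1)$, which matches the triangle relation $[\g_3]=[\g_2]-[\g_1]$. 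Here one uses that orientation reversal is the shift functor, so $\Phi(\ov\g)=-\Phi(\g)$ --- consistently with the fibrewise antipodal involution of $S\S$ being isotopic to the identity. Hence $\bar\Phi$ is a well-defined homomorphism.

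Surjectivity is read off from the Gysin sequence of $S\S\to\S$, whose Euler number is $\chi(\S)=2-2g$: it produces a split short exact sequence
\[0\longrightarrow\Z/(2g-2)\longrightarrow H_1(S\S;\Z)\longrightarrow H_1(\S;\Z)\longrightarrow0,\]
with kernel generated by the fibre class $[F]$, of order $2g-2$. Choosing a geometric symplectic basis $a_1,b_1,\dots,a_g,b_g$ of $\S$ realised by embedded simple closed curves, the classes $\bar\Phi[a_i],\bar\Phi[b_i]$ project onto a basis of $H_1(\S;\Z)$. For the torsion summand, let $\g_0$ be an embedded separating simple closed curve cutting off a one-holed torus $T$; it is non-contractible, hence unobstructed, and $[\g_0]=0$ in $H_1(\S;\Z)$, so $\bar\Phi[\g_0]$ is torsion, equal by Poincar\'e--Hopf to $\chi(T)\,[F]=-[F]$, which generates $\Z/(2g-2)$. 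Thus $\bar\Phi$ is onto.

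The remaining point, and the one I expect to be most delicate, is injectivity: one must show that the surgery relations already collapse the free group on curves onto $H_1(S\S;\Z)$, so that $K_0(Fuk(\S))$ is no larger. I would produce a one-sided inverse $\Psi\colon H_1(S\S;\Z)\to K_0(Fuk(\S))$ sending the generators above to $[a_i],[b_i],[\g_0]$, and verify that each relation of the Gysin presentation lifts to a relation in $K_0$. The relations inherited from $H_1(\S;\Z)$ are routine: any two embedded curves with equal class in $H_1(S\S;\Z)$ are joined by a finite sequence of band-sum moves, each an instance of a surgery triangle, so they have the same class in $K_0$; together with the generation statement of the earlier sections --- a fixed finite family of curves generates the derived Fukaya category, as in Abouzaid's treatment --- this makes $\Psi$ a well-defined surjection. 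The genuine obstacle is the single torsion relation $(2g-2)\,[\g_0]=0$ in $K_0$: one must realise the Euler-number relation of $\S$ by an explicit finite composition of surgery triangles, a cyclic chain of non-split extensions running around the $2g-2$ handles --- the geometric shadow of the familiar phenomenon by which a cyclic relation of length $n$ produces $n$-torsion in the Grothendieck group of a stable category. Granting this, $\bar\Phi\circ\Psi=\mathrm{id}$ and $\Psi$ is surjective, so $\bar\Phi$ and $\Psi$ are mutually inverse isomorphisms, which is the statement of the theorem.
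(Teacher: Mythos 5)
Your map is the same as the paper's: the lift to the unit tangent bundle is exactly the pair (homology class, winding number relative to a vector field) under the splitting $H_1(S\S;\Z)\cong H_1(\S;\Z)\oplus\Z/\chi(\S)$, and your surjectivity argument and the strategy of realising the relations of $H_1(S\S;\Z)$ by surgery triangles mirror the paper's. However, there is a genuine gap at the very first step, which is in fact the technical heart of the paper's proof: well-definedness of $\bar\Phi$ on $K_0$. You reduce the verification to mapping-cone triangles attached to a \emph{single} transverse intersection point and check that the oriented smoothing does not change the lifted homology class. But the relations in $K_0(Tw(Fuk(\S)))$ are indexed by \emph{all} twisted complexes quasi-isomorphic to zero, and such a complex $(\G,\Delta)$ is in general an iterated extension with a full matrix of degree-one connecting morphisms; it is nowhere established that these relations are generated by your elementary single-point smoothings. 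The paper handles this directly: from a primitive $H$ with $\hat m_1(H)=F$ (the canonical quasi-isomorphism to a $C^1$-perturbed copy $\G'$) it assembles an explicit integral $2$-chain $U$ out of the polygons contributing to $\hat m_1(\Delta)$, proves $\partial U=\sum_k\g_k$ (killing the $H_1(\S)$ component), and then computes the Euler measure of $U$ to show the winding number vanishes modulo $\chi(\S)$. Without an argument of this kind your $\bar\Phi$ is not known to descend to $K_0$, and everything downstream is conditional.

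Two further points are asserted rather than proved. First, the torsion relation $(2g-2)[\g_0]=0$, which you flag as "the genuine obstacle" and then grant: the paper derives it not from a cyclic chain of extensions but from a symmetry argument --- a curve $\g$ separating $\S$ into two homeomorphic halves satisfies $[\g]=-\tfrac{\chi(\S)}{2}T$ by the pair-of-pants relations, while a homeomorphism exchanging the halves identifies $\g$ with its orientation reversal $\g[1]$, so $-\chi(\S)T=[\g]+[\g[1]]=0$. Second, surjectivity of your $\Psi$ implicitly uses that every class in $K_0$ is an integer combination of $[a_i],[b_i],[\g_0]$; the paper obtains this not from a split-generation statement for the category (which it never proves) but from explicit $K_0$-relations: the cone/Dehn-twist formula reducing arbitrary non-separating curves to Lickorish generators, the pair-of-pants relation $[\a_1]+[\a_2]+[\a_3]=T$, and the Torelli-group analysis showing homologous curves with equal winding number agree in $K_0$. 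You would need to supply these computations (or an actual generation theorem) for your one-sided inverse argument to close.
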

 We also study the action of the mapping class group $MCG(\S)$ on our topological Fukaya category $Fuk(\S)$ by functors and prove the following faithfulness result inspired by the work of Khovanov and Seidel in \cite{khovanov2002quivers}.

\begin{theorem}
Let $f\in MCG(\S)$ and suppose that the following isomorphism holds for any `admissible'
closed curves $\g_1 \& \g_2$,
$$HF(f(\g_1),\g_2)\cong HF(\g_1,\g_2).$$
Then $f$ is isotopic to the identity in $MCG(\S).$
\end{theorem}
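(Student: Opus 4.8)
\section*{Proof proposal}

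The plan is to feed the hypothesis into the combinatorial model of $HF$ established above in order to recover geometric intersection numbers, and then to invoke the rigidity of surfaces: an essential simple closed curve is determined up to isotopy by its intersection numbers with all essential simple closed curves, and a homeomorphism fixing every such isotopy class is isotopic to the identity (with the single exception of the hyperelliptic involution in genus two, which will be ruled out by the $\Z/2\Z$-grading). Concretely, I would first observe that for essential simple closed curves $\a\not\simeq\b$ one has $\dim_\Z HF(\a,\b)=i(\a,\b)$, the geometric intersection number: since $HF$ is isotopy invariant one may compute with the geodesic representatives for a hyperbolic metric, where any immersed lune would lift to an immersed geodesic bigon in $\mathbb{H}^2$, forbidden by Gauss--Bonnet, so the Floer differential vanishes; and for $\a\simeq\b$ a small Hamiltonian perturbation gives $HF(\a,\b)\cong H_*(S^1;\Z)$ of rank two. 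Taking ranks in the assumed isomorphism then yields $i(f(\a),\b)=i(\a,\b)$ for all pairs with $\a\not\simeq\b$ and $f(\a)\not\simeq\b$; the finitely many remaining configurations are harmless and are not needed below. Since $f$ is a homeomorphism we also have $i(f(\a),f(\b))=i(\a,\b)$, so $i(f(\a),\b)=i(f(\a),f(\b))$, and since $f$ permutes isotopy classes of essential simple closed curves bijectively, writing $\d=f(\a)$ gives
$$i(\d,\b)=i(\d,f(\b))\qquad\text{for all essential simple closed curves }\d,\b.$$

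Next I would apply the classical injectivity of the total intersection--number map on isotopy classes of essential simple closed curves (equivalently, Dehn--Thurston coordinates, or the fact that such a curve is detected by its intersection numbers with a fixed filling system of curves): the displayed identity forces $f(\b)\simeq\b$ for every essential simple closed curve $\b$. Applying the Alexander method to a filling system of curves --- equivalently, using faithfulness of the mapping class group action on the curve complex --- one concludes $f\simeq\mathrm{id}$ when the genus is at least three, and $f\simeq\mathrm{id}$ or $f\simeq\iota$, the hyperelliptic involution, when the genus is two.

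It then remains to exclude $f\simeq\iota$ in genus two, and here the absolute $\Z/2\Z$-grading by sign of intersection is essential. Since $\iota$ acts as $-\mathrm{id}$ on $H_1(\S;\Z)$, for a non-separating oriented curve $\a$ the curve $\iota(\a)$ is isotopic to $\a$ with reversed orientation; reversing the orientation of one factor flips the sign of every intersection point and hence reverses the $\Z/2\Z$-grading, so $HF(\iota(\a),\b)$ is $HF(\a,\b)$ with its even and odd parts interchanged. Choosing $\a,\b$ with $\langle[\a],[\b]\rangle\neq0$ --- for instance a symplectic pair --- the Euler characteristic of $HF(\a,\b)$ equals $\pm\langle[\a],[\b]\rangle\neq0$, so its two graded pieces have different ranks and the interchange yields a non-isomorphic graded group, contradicting the hypothesis. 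Hence $f\simeq\mathrm{id}$ for all genus $\geq 2$.

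The step I expect to be the main obstacle is the first one: one must make sure the identification of $\dim HF$ with the geometric intersection number (and of self-Floer homology with $H_*(S^1)$) is available in exactly the generality needed to run the intersection-number argument, and one must not overlook the genus-two exception in passing from ``fixes every curve'' to ``isotopic to the identity'' --- which is precisely what makes the $\Z/2\Z$-grading indispensable. Everything else is standard two-dimensional topology.
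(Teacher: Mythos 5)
Your proposal is correct, and while it shares the paper's overall strategy (show that $f$ fixes the isotopy class of every essential simple closed curve, then invoke the Alexander method), the implementation of both halves is genuinely different. For the first half, the paper pins down $f(\a_1)$ by two specific test computations --- vanishing of $HF$ against a curve bounding the genus-one piece containing $\a_1$, and rank one against the dual curve $\b_1$ --- and then propagates this to a fixed filling system; you instead use the identity $\dim_\Z HF(\a,\b)=i(\a,\b)$ globally together with the injectivity of the total intersection-number map on isotopy classes, which is cleaner and avoids the case analysis, at the cost of having to note (as you do) that the hypothesis only applies to admissible pairs, so the test curves $\d$ must be chosen, say, non-homologous to the curves being compared --- this is always possible by change of coordinates, but deserves the one line you give it. For the second half, the paper cuts $\S$ along the filling system and applies Alexander's trick to the resulting discs, whereas you cite the standard statement that only the identity (and, in genus two, the hyperelliptic involution $\iota$) acts trivially on isotopy classes of curves. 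Your explicit treatment of the genus-two exception is the most valuable divergence: the paper's proof does not visibly address $\iota$, and since $\iota$ fixes every unoriented isotopy class, no intersection-number argument alone can exclude it. Your observation that $\iota(\a)\simeq\a[1]$ for non-separating $\a$, that reversing orientation swaps the even and odd parts of $HF$, and that a pair with nonzero algebraic intersection number therefore violates the hypothesis, is exactly right --- with the caveat, which you should state explicitly, that this forces the isomorphism in the hypothesis to be read as an isomorphism of $\Z/2\Z$-graded groups (the ungraded ranks agree, so an ungraded reading would leave the theorem false in genus two). Modulo that reading, your argument is complete and in this respect more careful than the paper's.
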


The content of this paper is described as follows.
In Section 2, we review intersection Floer homology
for unobstructed immersed curves
 on a Riemann surface of genus greater than one. Here we introduce the admissibility condition 
 and   show isotopy invariance of Floer homology.
  The $A_{\infty}$-structure and the Fukaya category in this setup are defined in Section 3. For the general treatment one can see
 \cite{fukaya2001floer, seidel2008fukaya}. In Section 4, we give an explicit construction of a quasi-isomorphism between two $C^1$-close curves, which is used to show that any two isotopic curves are quasi-isomorphic objects. These quasi-isomorphisms serve as weak identity morphisms in the sense of  \cite{alex2009proof}. The Grothendieck group of the derived Fukaya category is computed in Section 5. Following Abouzaid, we derive the Fukaya category using twisted complexes and identify the zero objects. Next we construct the map from $K_0(Fuk(\S))$ to $H_1(S\S;\mathbb{Z})$ and use well-known results about mapping class groups to prove injectivity. In the last section we define an action of the mapping class group on this category and prove that this action is faithful using the Alexander's trick. 

In the Appendix we discuss Maslov index and show that the combinatorial counting of polygons coincides with that in Floer theory.

\subsection*{Acknowledgments}
The authors are grateful to Mohammed Abouzaid and Paolo Ghiggini for helpful discussions. This research was supported by the LUMS faculty research startup grant awarded to the first author.  The second author would like to thank the Abdus Salam School of Mathematical Sciences, Lahore for their invitation.

\section{Reviewing Floer Homology}\label{Floer}
 Basic objects in this paper are {\em unobstructed curves}, which means
 oriented immersed closed curves whose lifting to the universal cover is a properly embedded line.
 A closed connected curve is unobstructed if and only if it is an immersion which is neither null-homotopic and nor does it bound an immersed fishtail \cite[Lemma 2.2]{abouzaid2008fukaya}.

In this section we review the definition of Floer homology
for unobstructed curves.
 Floer chain complex will be generated by intersection points, which requires the usual transversality
 condition on the pair of curves giving these intersection points. Defined in this naive manner our chain complexes are not invariant under isotopy. For a given choice of disjoint representatives of two curves the intersection could be empty, whereas a little perturbation may give rise to intersection points. To overcome this we introduce the following \emph{admissibility condition}. This forbids us from considering a pair of curves bounding a cylinder.
By a $2$-chain we mean a linear combination of the connected components of the complement of curves. Any map from a surface with boundary $(S,\partial S)$ to $(\S,\gamma_1\cup\gamma_2)$ defines a $2$-chain. 
An Euler zero $2$-chain is one which is defined with a surface $S$ whose Euler characteristic is zero.

\begin{defn}\label{admissibility} A pair of  unobstructed curves $(\g_1,\g_2)$ is admissible, if any
 Euler zero 2-chain between them has regions with positive and negative coefficients.
In particular, two non-homologous curves always form an admissible pair.
\end{defn}

All curves as objects of our category are going to be equipped with a  {\em marked point}.
Marked points are going to be useful later in the context of smoothing of intersection points between curves (see Section \ref{cones}).
 We define below the Floer chain complex for an admissible pair of  unobstructed  curves with marked point $(\g_1,\g_2)$.

 We will shorten and say {\em admissible pair} for admissible pair of  unobstructed  curves with marked point and assume transverse intersection.
\begin{defn}
The Floer chain complex for the admissible pair 
$(\g_1,\g_2)$ is the $\Z/2\Z$-graded module over $\mathbb{Z}$ generated by intersection points $p$ between $\g_1$ and $\g_2$, with even degree (resp. odd degree)
 if  the intersection sign $\epsilon_p$ is negative (resp. positive).
  Namely
  $$CF^i(\g_1,\g_2)=\mathop{\mathop{\sum}\limits_{p\in\g_1\cap\g_2}}\limits_{\epsilon_p=(-1)^{i+1}}\mathbb{Z}\,p$$
\end{defn}

For points $p,q\in \g_1\cap \g_2$, a {\em bigon} $u$ from $p $ to $q$ is, up to a smooth reparametrization,  an oriented immersion of the half disc 
$ \mathbb{D}=\{z\in \mathbb{C}\ :\ |z|=1 \text{ and } \Im(z)\geq 0\}$, $$u:\mathbb{D}\to \S$$ which is mapped as follows:\\
the point $1,-1\in \mathbb{D}$ are mapped to the points $p,q \in \partial u$ respectively. The clockwise and anti-clockwise paths from $p$ to $q$ are mapped to subsets of $\g_1$ and $\g_2$ respectively (see Figure \ref{bigon}).
Note that our immersed bigons are equivalent to what the authors call smooth lunes in \cite{de2014combinatorial}.
\begin{figure}
\begin{picture}(0,110)
 \put(-95,0){\def\svgwidth{0.45\textwidth}
 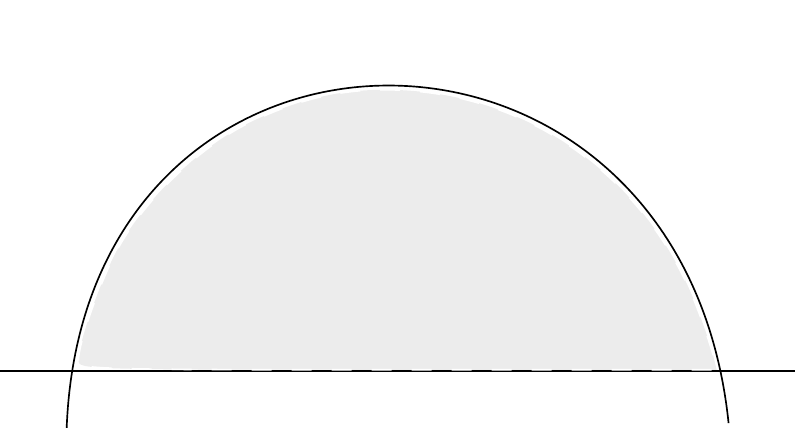} 
\end{picture}
\caption{A bigon from $p$ to $q$}
\label{bigon}
\end{figure}

By the Riemann mapping theorem, each bigon  admits an holomorphic representative which is unique up to
reparameterization by automorphisms of $(\mathbb{D};1,-1)$.

We show in Appendix \ref{appendix_bigons} that the boundary  map defined below  coincides with the counting of rigid holomorphic discs in lagrangian Floer theory.

\begin{defn}
Let $p,q\in \g_1\cap \g_2$, we define $\mc{M}(q,p) $ to be the set of bigons from $p$ to $q$.
\end{defn}
 One can choose a \emph{local model near $q$} by sending $\g_1$ to the $x$-axis and $\g_2$ to the $y$-axis. The intersection of the bigon with a neighborhood of -1 lies either in the first quadrant $\{x>0,y>0\}$ or the third quadrant $\{x<0,y<0\}$.
The \emph{differential} \mbox{$d: CF^i(\g_1,\g_2)\to CF^{i+1}(\g_1,\g_2)$} is defined by counting immersed bigons.

\begin{defn} We define the differential by the following formula $$d(p)=\mathop{\sum}\limits_{q\in u\in \mc{M}(q,p)} (-1)^{s(u)}q,$$
where $s(u)$ is the sum of a contribution of 1 for each marked point on $\partial u$ and a contribution of 1 whenever the given orientation of $\g_2$ disagrees with its orientation as the boundary of $u$.
\end{defn}
Note that the sign is fixed so that the boundary map in Floer complex has degree $+1$.
The admissibility condition ensures finitenes of the sum. We give a proof for polygons in Section \ref{infinity}.
The following lemma shows that $(CF^*(\g_1,\g_2),d)$ is indeed a chain complex.

\begin{lem}
The Floer differential satisfies $d^2=0.$
\end{lem}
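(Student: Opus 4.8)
The plan is to prove $d^2 = 0$ by the standard Floer-theoretic degeneration argument, adapted to the combinatorial setting of immersed bigons. The coefficient of $r$ in $d^2(p)$ is a signed count of pairs $(u_1, u_2)$ where $u_1 \in \mc{M}(q,p)$ and $u_2 \in \mc{M}(r,q)$ for some intermediate intersection point $q \in \g_1 \cap \g_2$. To show this vanishes, I would set up a free involution on the set of such broken configurations that reverses the sign, so that contributions cancel in pairs. Concretely, each broken pair $(u_1,u_2)$ should be viewed as an endpoint of a one-dimensional moduli space of "immersed annuli/discs with two corners" (the combinatorial analogue of the Gromov-compactified index-two strip), whose two ends correspond to the two ways of breaking; matching up the two endpoints of each such one-dimensional family gives the involution.

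The key steps, in order, would be: (1) Given a broken pair $(u_1, u_2)$ meeting along the curve $q$, analyze the local picture near $q$: using the local model sending $\g_1$ to the $x$-axis and $\g_2$ to the $y$-axis, the two bigons occupy specified quadrants, and the union $u_1 \cup u_2$ near $q$ looks like either a smooth strip passing through $q$ or a configuration with a genuine corner. (2) In the "smooth" case one can glue $u_1$ and $u_2$ to a slightly larger immersed bigon and then re-break it at a nearby intersection point, or alternatively resolve at $q$ to get a bigon from $p$ to $r$ that contains another intersection point on its boundary where it can be re-broken — this produces the partner pair $(u_1', u_2')$. (3) Check that the sign $s(u_1) + s(u_2)$ and $s(u_1') + s(u_2')$ differ by one: the marked-point contributions are unchanged since the total boundary is the same, and the orientation-disagreement contribution along $\g_2$ flips by exactly one because the intermediate point switches the local side on which $\g_2$ traverses the boundary. (4) Verify the involution is fixed-point-free, which is where unobstructedness enters: a fixed configuration would force a bigon that is "its own partner," contradicting the properly-embedded-line property in the universal cover (a genuine immersed disc or fishtail would appear). (5) Invoke the admissibility condition to guarantee the relevant sets are finite so the sums make sense — this is already granted by the earlier discussion.

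I expect the main obstacle to be step (2)–(4): making the gluing/re-breaking correspondence precise as an honest bijection on combinatorial objects, rather than gesturing at the analytic picture. In the smooth analytic theory this is the standard "no boundary of a compact $1$-manifold" argument, but here one must argue directly with immersions of discs, and the subtlety is that gluing two immersed bigons need not obviously produce an immersed bigon — one has to rule out that the glued object develops a boundary branch point or fails the corner conditions, and this is exactly where one lifts to the universal cover and uses that each curve lifts to a properly embedded line. A clean way to organize this is to pass to the universal cover $\widetilde{\S}$, where $\g_1, \g_2$ lift to properly embedded lines, every bigon lifts to an embedded bigon bounded by a pair of such lines, and the degeneration argument becomes the elementary combinatorics of two embedded lines in the plane bounding half-disc regions. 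The correspondence between broken pairs is then visibly a fixed-point-free involution, and one descends back to $\S$. I would also remark that this is precisely the content established, in the non-isotopic case, in de Silva--Robbin--Salamon \cite{de2014combinatorial}, and that the admissibility hypothesis is exactly what is needed to extend their argument to our more general setting.
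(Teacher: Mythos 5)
Your proposal is correct and follows essentially the same route as the paper: the paper also cancels each broken pair $(u,v)$ against a partner pair $(u',v')$ obtained by re-breaking the concatenated configuration at the non-convex corner $q'$ on the other curve, checks that the parities of $s(u)+s(v)$ and $s(u')+s(v')$ differ, and uses unobstructedness to guarantee the partner decomposition exists. Your additional remarks on lifting to the universal cover and on fixed-point-freeness flesh out points the paper leaves implicit, but the underlying argument is the same.
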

\begin{proof}It is enough to show that for any pair of bigons $(u,v)$: $u\in \mc{M}(q,p)$, $v\in \mc{M}(r,q)$, there exists a non-convex corner point $q'$ lying on $\g_1$ (see Figure \ref{fig:differential}). This point gives rise to a new pair of bigons $(u',v')$ such that the signed contributions cancel pairwise. Since the curves we consider are unobstructed immersed curves, the tangent vector to $\g_2$ at $r$ points upward thus the curve only intersects $\g_1$ and not itself. Let the resulting intersection point with $\g_1$ be denoted by $q'$, then $q'$ gives rise to bigons $u'\in \mc{M}(q',p)$ and $v'\in \mc{M}(r,q')$. The parity of signed contributions $s(u)+s(v)$ and $s(u')+s(v')$ differs and so they cancel pairwise giving
 $$d^2p=d((-1)^{s(u)}q+(-1)^{s(u')}q')=(-1)^{s(u)+s(v)}r+(-1)^{s(u')+s(v')}r=0.$$
\end{proof}

\begin{figure}
\begin{picture}(0,110)
 \put(-135,0){\def\svgwidth{0.35\textwidth}
 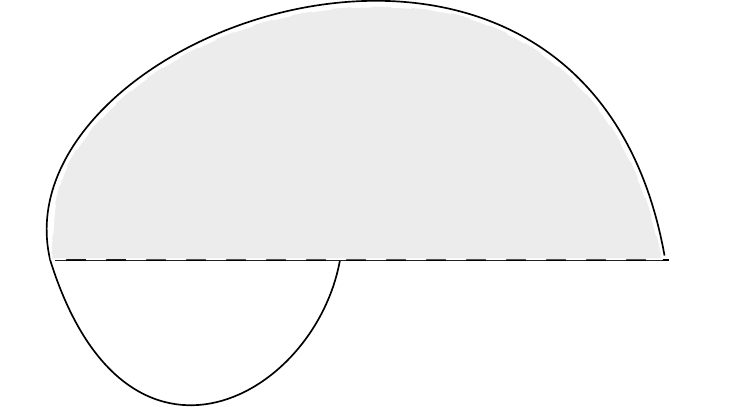} 
\put(10,0){\def\svgwidth{0.35\textwidth}
 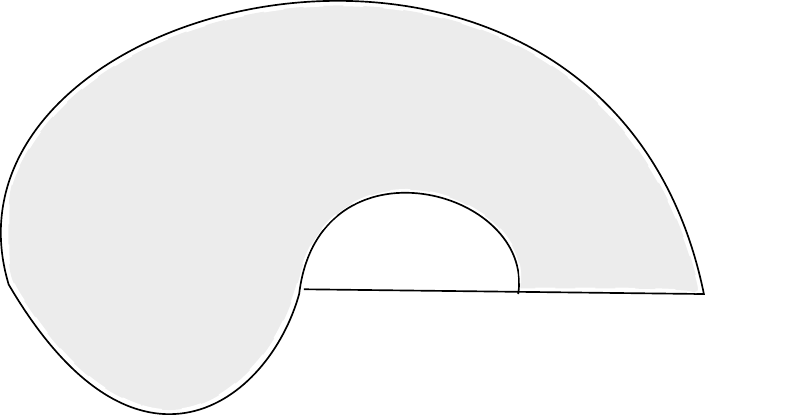} 
\end{picture}

\caption{Bigons whose contributions cancel pair-wise}
\label{fig:differential}
\end{figure}

\begin{defn} Given an  admissible pair of curves $(\g_1,\g_2)$, Floer cohomology is defined as the cohomology of the chain complex $(CF^*(\g_1,\g_2),d)$ and is denoted by $$HF^*(\g_1,\g_2).$$
\end{defn}

Following Bouchair \cite{bouchair2009cat} we can prove the following invariance theorem. A stronger result involving quasi-isomorphisms is given in Section \ref{quasi}.
\begin{theorem}[Isotopy invariance]\label{E:isotopy}
Let $\g_1,\g_2$, be admissible curves.  If  $\g'_1,\g'_2$  are admissible and respectively isotopic to $\g_1,\g_2$, then
$HF^*(\g_1,\g_2)$ and $HF^*(\g'_1,\g'_2)$ are isomorphic.
\end{theorem}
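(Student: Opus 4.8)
The plan is to follow Bouchair \cite{bouchair2009cat}: cut the isotopy into elementary moves and exhibit, for each move, an explicit isomorphism of the Floer complex. Since $\g_1'$ is isotopic to $\g_1$ and $\g_2'$ to $\g_2$ separately, composing the two isotopies lets us factor the problem as an isotopy of $\g_1$ to $\g_1'$ with $\g_2$ fixed, followed by an isotopy of $\g_2$ to $\g_2'$ with $\g_1'$ fixed, and the two cases are handled identically; so it suffices to treat an ambient isotopy $(\g_2^t)_{t\in[0,1]}$ from $\g_2$ to $\g_2'$ with $\g_1$ held fixed. Because $\g_2^t$ is the image of $\g_2$ under a diffeomorphism, it has the same self-intersection pattern for every $t$; in particular it stays unobstructed and $[\g_2^t]$ is constant. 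After perturbing the isotopy rel endpoints, I would arrange that $\g_2^t$ is transverse to $\g_1$, misses the self-intersection points, and avoids the marked points for all but finitely many parameters $t_1,\dots,t_N$, and that at each $t_k$ exactly one of three degenerations occurs: (i) a single Morse tangency of $\g_2^{t_k}$ with $\g_1$; (ii) $\g_1$ running through a double point of $\g_2^{t_k}$; (iii) the marked point of $\g_2^{t_k}$ sitting on $\g_1\cap\g_2^{t_k}$. On each complementary interval the combinatorial type of $(\g_1,\g_2^t)$ is constant and the complexes are canonically identified, so only a single move of each type must be analysed.

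Moves (iii) and (ii) are the easy ones. A move of type (iii) leaves the generators and the bigons unchanged and only toggles the sign exponent $s(u)$ for those bigons whose $\g_2$-boundary runs over the marked point, so rescaling the affected generators by $-1$ intertwines the differentials. A move of type (ii) is a Reidemeister-III type reshuffling in which two intersection points collide and re-emerge; there is a canonical sign-preserving bijection of the nearby bigons, hence an isomorphism of complexes.

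The only substantial move is (i). Treating a birth (a death being its inverse), just after $t_k$ two intersection points $p,q$ are created, cobounding a small embedded bigon $u_0$ supported in the disc of the move; making the move localized enough, the matrix coefficient $\langle d\,p,q\rangle$ equals $\pm1$. I would then show that Gaussian elimination of the pair $(p,q)$ along $u_0$ turns the new complex back into $(CF^*(\g_1,\g_2),d)$, which yields the isomorphism on cohomology. The geometric input is a bijection between bigons of the two pairs: a bigon of the old pair that misses the move disc survives verbatim; one whose $\g_2$-boundary crosses the move disc either survives, pushed off the finger, or splits off as a bigon with an endpoint in $\{p,q\}$ attached to $u_0$; and every new bigon with an endpoint in $\{p,q\}$ arises by concatenating with $u_0$. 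A bookkeeping check that the exponents $s(\cdot)$ behave correctly under these surgeries identifies the eliminated differential with the old one.

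The step I expect to be the real obstacle is keeping \emph{admissibility} (equivalently, the finiteness of the bigon counts, see Definition \ref{admissibility}) along the whole isotopy, not merely at its endpoints and the junction pair $(\g_1',\g_2)$. When $[\g_1]\neq[\g_2]$ this is free, since the classes are locally constant and non-homologous curves always form an admissible pair; in general a failure of admissibility is an Euler--zero $2$-chain with non-negative coefficients between $\g_1$ and $\g_2^t$ (in the extreme case an embedded annulus, forcing $\g_1$ and $\g_2^t$ to be parallel), and one has to argue that the isotopy can be steered clear of such configurations in its interior given that they are absent at the endpoints. Once that is secured, the move-by-move verification above is a routine local computation, and the statement obtained here is in any case subsumed by the quasi-isomorphism result of Section \ref{quasi}.
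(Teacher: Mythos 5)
Your proposal follows essentially the same route as the paper: reduce the isotopy to elementary moves (the paper invokes Fathi to restrict to births/deaths of a cancelling pair of intersection points) and, for the birth of a pair joined by a unique small bigon, apply Gaussian elimination following Bouchair to identify the resulting complex, up to homotopy equivalence, with the original one. The admissibility-along-the-isotopy issue you flag is genuine, but the paper's proof does not address it either, so your treatment is, if anything, the more careful of the two.
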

\begin{proof}
Using \cite[Section 3.3]{fathi1991travaux}, it is enough to consider an elementary isotopy of one of the curves which create two intersection points $a$ and $b$
connected by a bigon from $a$ to $b$.
Let us consider the case where $\g_2'=\g_2$, and $\g_1'\cap \g_2=(\g_1\cap \g_2)\cup\{a,b\}$.
Inclusion of intersection points identifies $CF(\g_1,\g_2)$ with a submodule  $\mathcal{C}\subset CF(\g_1',\g_2)$.
The inclusion is not a chain map. However there is only one bigon from $a$ to $b$, and following Bouchair argument \cite[Theorem 3.3]{bouchair2009cat} (see also \cite{bar2007fast}  ) we may apply to the complex $\mathcal{C}'$ a Gaussian elimination which produces a subcomplex $\mathcal{C}''\subset \mathcal{C}'$ which is homotopy equivalent to $\mathcal{C}'$ and  isomorphic to $\mathcal{C}$.
\end{proof}

\section{The $A_{\i}$-structure}\label{infinity}
 The $A_{\i}$-structure constitutes higher order compositions of morphisms. Each composition is only associative upto homotopies forming a sequence of obstructions for associativity of some higher product. For details one can see \cite{fukaya1993morse,seidel2008fukaya}.
 
 The definition of the Fukaya category requires a coherent extension of the admissibility condition.
\begin{defn}\label{admis_polygon}Admissibility: A finite set of curves is admissible if any integral Euler zero  $2$-chain representing a homology between some of these curves has positive and negative coefficients. Here the $2$-chain is represented by a linear combination of the connected components of the complement of the curves and Euler zero means that it can be defined by a map from a surface whose Euler characreristic is zero.
\end{defn}

Let us consider an admissible collection of unobstructed  transverse curves $\{\g_i\}_{i=0}^k$ without having triple intersections.
A {\em $(k+1)$-gon} is an orientation preserving map from the disc $D$ to $\S$, with $k+1$ marked points $a_0$,\dots,$a_k$ on the boundary such that the consecutive arcs
$\wideparen{a_ia_{i+1}}$  on  $\partial D$ are mapped to the respective curves $\g_i$.

The higher product $m_k$ is the following map
$$m_k:CF^*(\g_{k-1},\g_{k})\o\ldots \o CF^*(\g_{1},\g_{2})\o CF^*(\g_{0},\g_{1})\to CF^*(\g_{0},\g_{k}) $$  defined by counting oriented immersed $(k+1)$-gons
with convex corners.
We denote by $p_{i,i+1}$ an intersection point between curves $\g_i$ and $\g_{i+1}$.

One can choose a \emph{local model near $p_{i,i+1}$} by sending $\g_i$ to the $x-axis$ and $\g_{i+1}$ to the $y-axis$. The polygon thus lies either in the second quadrant $\{x<0,y>0\}$ or the fourth quadrant $\{x>0,y<0\}$. Near $p_{0,k}$ we have to choose the opposite local model by sending $\g_k$ to the $x-axis$ and $\g_{0}$ to the $y-axis$ for the polygon to lie in even quadrants.
\begin{defn}\label{polygons}
Let $\mc{M}(p_{0,k},p_{k-1,k},\ldots,p_{0,1})$ denote the set of $(k+1)$-gons $u$ such that the boundary $\partial u$ is a union of segments $\overline{p_{i-1,i},p_{i,i+1}}\subset \g_i$,
the corners are convex and the natural orientation of the boundary of $u$ corresponds to increasing $i$.
\end{defn}

Here $(k+1)$-gons are considered modulo an action of $Aut(D)$ by reparameterization. By the Riemann mapping theorem, one can fix the positions of three marked points and the resulting moduli space of conformal structures on $D$ is $k-2$ dimensional. For a fixed conformal structure and a homotopy class $[u]$, assuming transversality holds, the expected dimension of the moduli space is $$\dim{\mc{M}(p_{0,k},p_{k-1,k},\ldots,p_{0,1})}=k-2+ Ind([u]).$$
Here $Ind([u])$ is the index of the Cauchy-Riemann operator with appropriate boundary conditions. 
Note that $(k+1)$-gons need not be embedded and finiteness is not obvious.
 The admissibility condition ensures finiteness as shown below. 
 
The Euler measure of a surface $S$ with corners on the boundary is $\frac{1}{2\pi}$ times the
integral over $S$ of the curvature of a metric on $S$ for which the boundary  is geodesic and the corners  are right angles. We use the notation $e(S)$. An important property
is additivity for gluing along segments of the boundary. In the case of smooth boundary it coincides with the Euler characteristic. For a $n$-gon $D$ with convex corners, the value is $e(D)=1-\frac{n}{4}$.

Let $C:=C_2(\S;\mathbb{Z})$ denote the free module over $\mathbb{Z}$ with basis consisting of the connected components of $\S\setminus (\cup_i \g_i)$. The Euler measure $e$ extends to a linear form on $C$. Let $H\subset C$ be the group of $2$-chains in the kernel of $e$ whose boundary is represented by a linear combination of the curves $\g_i$. 
We will denote by $H_{\mathbb{F}}$ and $C_2(\S; \mathbb{F})$ the $\mathbb{F}$-span of $H$ and $C$ respectively, where $\mathbb{F}$ is a field. We let the positive cone in $C$ and $C_2(\S,\mathbb{R})$ be denoted by $C_2^+(\S;\mathbb{Z})$ and $C_2^+(\S;\mathbb{R})$ respectively. Recall that a collection of curves $\Gamma=\cup_i \g_i$ is admissible if and only if any non-zero $b\in H$ has positive and negative coordinates in the canonical basis of $C$ over $\mathbb{Z}$. Then the property will also holds for any $b\in H_\mathbb{Q}$.

\begin{lem}
If  $\Gamma$ is admissible then any non-zero $b\in H_{\mathbb{R}}$ has positive and negative coordinates in the canonical basis of $C$.
\end{lem}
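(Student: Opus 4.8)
The plan is to argue by contradiction, approximating a hypothetical ``one--signed'' real $2$--chain in $H_{\mathbb{R}}$ by a rational, hence (after scaling) integral, $2$--chain that still lies in $H$, and to which the admissibility hypothesis then applies.

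So suppose some nonzero $b\in H_{\mathbb{R}}$ has no negative coordinate or no positive coordinate in the canonical basis of $C$; after replacing $b$ by $-b$ if necessary, we may assume $b=\sum_R b_R\,R$ (the sum over the basis regions $R$) with $b_R\geq 0$ for all $R$ and $b\neq 0$. Let $Z=\{R:\ b_R=0\}$; since $b\neq 0$, $Z$ is a proper subset of the basis and $b_R>0$ for every $R\notin Z$. The crucial observation is that $H_{\mathbb{R}}$ is a \emph{rational} linear subspace of $C_2(\S;\mathbb{R})$: by definition it is the $\mathbb{R}$--span of the subgroup $H\subset C$, so $H_{\mathbb{R}}=H_{\mathbb{Q}}\o_{\mathbb{Q}}\mathbb{R}$ with $H_{\mathbb{Q}}=\mathrm{span}_{\mathbb{Q}}(H)$; hence $H_{\mathbb{R}}\cap C_2(\S;\mathbb{Q})=H_{\mathbb{Q}}$ and the rational points of $H_{\mathbb{R}}$ are dense in it. The same holds for the subspace $W:=H_{\mathbb{R}}\cap\{x_R=0\ :\ R\in Z\}$, because the coordinate subspace is also defined over $\mathbb{Q}$: $W$ has an $\mathbb{R}$--basis of vectors from $H_{\mathbb{Q}}$, its rational points lie in $H_{\mathbb{Q}}$ and are dense in $W$, and $b\in W$. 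Pick a rational $v\in W$ with $\|v-b\|<\frac{1}{2}\min_{R\notin Z}b_R$. Then $v_R=0$ for $R\in Z$ and $v_R>0$ for $R\notin Z$, so $v\in H_{\mathbb{Q}}$ is nonzero and all of its coordinates are $\geq 0$. Clearing denominators gives a positive integer $N$ with $Nv\in H$; thus $Nv$ is a nonzero element of $H$ with no negative coordinate, contradicting the admissibility of $\Gamma$ (which forces every nonzero element of $H$ to have both a positive and a negative coordinate). Hence no such $b$ exists, which is the claim.

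The only delicate point is the treatment of the vanishing coordinates of $b$: one cannot simply move $b$ to a nearby rational point of $H_{\mathbb{R}}$, since this could make a zero coordinate slightly negative --- this is exactly why one first restricts to the rational subspace $W$ cut out by those coordinates, on which $b$ has strictly positive remaining coordinates, so that the approximation does no harm. Everything else is the elementary fact that a $\mathbb{Q}$--subspace of $C_2(\S;\mathbb{Q})$ becomes, after $\o\,\mathbb{R}$, a real subspace in which the rational points are dense and recover the original $\mathbb{Q}$--subspace, applied here to $H_{\mathbb{Q}}$ and to its intersection with a coordinate subspace. (Equivalently, one may cast the argument as the assertion that the rational polyhedral cone $H_{\mathbb{R}}\cap C_2^+(\S;\mathbb{R})$ is generated by finitely many rational rays, and then scale a rational generator into $H$.)
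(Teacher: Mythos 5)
Your argument is correct and is essentially the proof given in the paper: both reduce to the rational coordinate subspace $W=H_{\mathbb{R}}\cap\{x_R=0:R\in Z\}$ cut out by the vanishing coordinates of $b$, use that $W$ is spanned by its rational points to approximate $b$ by elements of $H_{\mathbb{Q}}$ with the same zero set and strictly positive remaining coordinates, and then contradict admissibility (the paper invokes it directly for $H_{\mathbb{Q}}$, you clear denominators to land in $H$). Your write-up only makes more explicit the justification that $W$ is defined over $\mathbb{Q}$, which the paper asserts without detail.
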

\begin{proof}
Suppose $b\in H_{\mathbb{R}}\cap C_2^+(\S;\mathbb{R})$ is non-zero. Let $J=\{i| b_i=0 \}$. Note that $ H_{\mathbb{R}}\cap \{x_i=0| i\in J\}$ is the $\mathbb{R}$-span of $ H\cap \{x_i=0| i\in J\}$, hence
 there exists a sequence $b_n\in H_{\mathbb{Q}}\cap \{x_i=0| i\in J\}$ such that $\mathop{\lim}\limits_{n\to \infty} b_n=b$. The non-zero components of $b$ are all positive, thus corresponding components of $b_n$ are all positive for $n$ big enough. This contradicts the fact that $b_n$ have positive and negative coordinates.
\end{proof}

\begin{prop}\label{compact}
If $\Gamma$ is admissible and $u\in C_2^+(\S; \mathbb{R})$, then $(u+H_{\mathbb{R}})\cap C_2^+(\S; \mathbb{R})$ is compact.
\end{prop}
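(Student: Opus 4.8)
The plan is to show that $(u+H_{\mathbb{R}})\cap C_2^+(\S;\mathbb{R})$ is closed and bounded in the finite-dimensional vector space $C_2(\S;\mathbb{R})$. Closedness is immediate: the affine subspace $u+H_{\mathbb{R}}$ is closed, and the positive cone $C_2^+(\S;\mathbb{R})$ (the set of chains with all nonnegative coordinates) is closed, so the intersection is closed. The entire content is therefore boundedness, and this is where the admissibility hypothesis, via the preceding lemma, must enter.

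For boundedness I would argue by contradiction. Suppose there is a sequence $v_n = u + b_n$ with $b_n \in H_{\mathbb{R}}$, $v_n \in C_2^+(\S;\mathbb{R})$, and $\|v_n\| \to \infty$. Passing to a subsequence, we may assume $v_n/\|v_n\|$ converges to some unit vector $w$. Since each $v_n$ has nonnegative coordinates, so does $w$, i.e. $w \in C_2^+(\S;\mathbb{R})$, and $\|w\|=1$ so $w\neq 0$. On the other hand, $v_n/\|v_n\| = u/\|v_n\| + b_n/\|v_n\|$, and since $u/\|v_n\| \to 0$ while $H_{\mathbb{R}}$ is a closed linear subspace containing each $b_n/\|v_n\|$, the limit $w$ lies in $H_{\mathbb{R}}$. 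Thus $w$ is a nonzero element of $H_{\mathbb{R}}\cap C_2^+(\S;\mathbb{R})$: a nonzero element of $H_{\mathbb{R}}$ all of whose coordinates are nonnegative. This directly contradicts the conclusion of the preceding lemma, which asserts that every nonzero element of $H_{\mathbb{R}}$ has both positive and negative coordinates. Hence no such unbounded sequence exists, and the set is bounded.

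The main obstacle — really the only substantive point — is the verification that the rescaled limit $w$ genuinely lands in $H_{\mathbb{R}}$ and is genuinely nonzero and sign-nonnegative; all three follow from elementary properties (closedness of linear subspaces, the normalization $\|w\|=1$, and closedness of the orthant), so once the preceding lemma is invoked the argument closes with no further work. I would remark that this compactness is exactly what is needed to guarantee that only finitely many homotopy classes of polygons contribute to each $m_k$, since the relevant moduli spaces are indexed (after fixing boundary data) by lattice points $u + H$ inside the compact region $(u+H_{\mathbb{R}})\cap C_2^+(\S;\mathbb{R})$, and a compact set contains only finitely many points of the discrete set $C_2(\S;\mathbb{Z})$.
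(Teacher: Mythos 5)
Your proof is correct, and it reaches boundedness by a slightly different route than the paper. The paper first notes that for each direction $b\in H_{\mathbb{R}}$ the affine line $u+\mathbb{R}b$ meets the positive cone in a compact segment (this is where the preceding lemma enters), then defines the diameter of that segment as a continuous function on the unit sphere $S(H_{\mathbb{R}})$ and takes its maximum $R$ to conclude $(u+H_{\mathbb{R}})\cap C_2^+(\S;\mathbb{R})\subset B(u,R)$ — an explicit uniform bound obtained from the extreme value theorem. You instead argue by contradiction with a rescaling limit: an unbounded sequence $v_n=u+b_n$ in the set would yield, after normalizing and passing to a subsequence, a nonzero $w\in H_{\mathbb{R}}\cap C_2^+(\S;\mathbb{R})$, directly contradicting the preceding lemma. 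Both arguments hinge on exactly the same input, namely that $H_{\mathbb{R}}$ meets the closed positive orthant only in $0$; yours trades the quantitative bound for a shorter qualitative argument and, as a bonus, sidesteps the small imprecisions in the paper's version (the diameter function there is written for $\mathbb{R}b\cap C_2^+$ where $(u+\mathbb{R}b)\cap C_2^+$ is meant, and its continuity is asserted without comment). Your closing remark correctly identifies how the proposition is used in the corollary that follows it.
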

\begin{proof}
For any $b\in H_{\mathbb{R}}$, let $\mathbb{R}b$ denote the $\mathbb{R}$-span of $b$. Then the intersection of the affine line $(u+\mathbb{R}b )$ with the positive cone $ C_2^+(\S; \mathbb{R})$ is  compact. 
We put the natural Euclidean structure on $C_2(\S;\mathbb{R})$ and
define a map $$d: S(H_{\mathbb{R}})\to \mathbb{R}^+,$$ where $ S(H_{\mathbb{R}})$ is the unit sphere in $H_{\mathbb{R}}$, by sending each $b$ to the diameter of the segment $\mathbb{R}b\cap C_2^+(\S; \mathbb{R})$. This map is continuous on a compact set, and for \mbox{$R=\max\{d(b)|  b\in S(H_{\mathbb{R}})\}$} we have $$(u+H_{\mathbb{R}})\cap C_2^+(\S; \mathbb{R})\subset B(u,R),$$
where $B(u,R)$ is a ball centered at $u$ with radius $R$. Thus we have a closed set in a Euclidean space which is also bounded.\end{proof}

\begin{cor}
Given an admissible  collection of unobstructed transverse curves $\Gamma=\cup_{i=0}^k\g_i$, the  set of polygons $\mc{M}(p_{0,k},p_{k-1,k},\ldots,p_{0,1})$ as in Definition \ref{polygons} is finite.
\end{cor}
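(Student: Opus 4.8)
The plan is to associate to every polygon a positive $2$-chain, to show that all of these $2$-chains lie in a single coset of $H_{\mathbb{R}}$ inside the positive cone, to invoke Proposition \ref{compact} for the compactness of that coset, and finally to use integrality to conclude that only finitely many $2$-chains --- and hence only finitely many polygons --- can occur. Given $u\in\mc{M}(p_{0,k},p_{k-1,k},\ldots,p_{0,1})$, let $c_u:=\sum_R \deg_R(u)\,[R]\in C_2^+(\S;\Z)$, where $R$ runs over the connected components of $\S\setminus\Gamma$ and $\deg_R(u)\in\Z_{\geq0}$ is the degree with which $u$ covers $R$; since $\Gamma$ has no triple points and $u$ is an orientation-preserving immersion with $\partial u\subset\Gamma$, the restriction of $u$ over each open region is a proper local diffeomorphism, so $\deg_R(u)$ is well defined.

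The next step is to show that $c_u-c_{u'}\in H$ for any two $u,u'\in\mc{M}(p_{0,k},\ldots,p_{0,1})$. Restricted to a curve $\g_i$, the boundary $1$-chain $\partial c_u$ is a sum of the oriented segments of $\g_i$ traversed by $\partial u$, and its boundary as a $0$-chain equals $[p_{i,i+1}]-[p_{i-1,i}]$, which depends only on the prescribed corners; hence $\partial(c_u-c_{u'})|_{\g_i}$ is a $1$-cycle, i.e. an integer multiple of $[\g_i]$, so $\partial(c_u-c_{u'})$ is a linear combination of the $\g_i$. As for the Euler measure, cutting the domain disc along $u^{-1}(\Gamma)$ exhibits it as a gluing, along boundary segments, of coverings of the closed regions $\bar R$; additivity of $e$ and the normalization $e(D)=1-\tfrac{k+1}{4}$ recorded above then give $e(c_u)=\sum_R\deg_R(u)\,e(\bar R)=e(D)=1-\tfrac{k+1}{4}$, independently of $u$, so $e(c_u-c_{u'})=0$ and $c_u-c_{u'}\in H$. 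Fixing one $u_0\in\mc{M}(p_{0,k},\ldots,p_{0,1})$ (if the set is empty there is nothing to prove), we obtain $\{\,c_u : u\in\mc{M}(p_{0,k},\ldots,p_{0,1})\,\}\subseteq\bigl(c_{u_0}+H_{\mathbb{R}}\bigr)\cap C_2^+(\S;\mathbb{R})$, which is compact by Proposition \ref{compact}; being contained in the integer lattice $C_2^+(\S;\Z)$, it is discrete, hence finite. Thus only finitely many $2$-chains arise.

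It remains to pass from finiteness of the set of $2$-chains to finiteness of $\mc{M}(p_{0,k},\ldots,p_{0,1})$ itself, by showing that a fixed positive $2$-chain is realized, modulo reparametrization by $\mathrm{Aut}(D)$, by at most finitely many polygons with the prescribed convex corners. The $2$-chain prescribes the sheets lying over each region, and there are only finitely many ways to glue them along the arcs of $\Gamma$ compatibly with convexity of the corners and with the cyclic order of the curves --- the same ``unfolding'' reconstruction that recovers a lune from its combinatorial type in \cite{de2014combinatorial}, now carried out for $(k+1)$-gons. I expect this last step to be the main obstacle: the first two steps are essentially bookkeeping together with the already-established Proposition \ref{compact}, whereas making the reconstruction of a $(k+1)$-gon from its $2$-chain precise (and genuinely finite) is the point that needs care.
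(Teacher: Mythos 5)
Your proposal is correct and follows essentially the same route as the paper: associate to each polygon its positive $2$-chain, observe that any two such chains differ by an element of $H$ (equal boundary data and equal Euler measure), and apply Proposition \ref{compact} together with integrality of the chains. The final step you flag --- that a fixed $2$-chain is realized by at most finitely many polygons up to reparametrization --- is elided in the paper, which simply writes $\mc{M}(p_{0,k},\ldots,p_{0,1})\subset C_2^+(\S;\mathbb{Z})$, i.e.\ identifies a polygon with its $2$-chain; your instinct that this identification deserves justification is sound, and the Riemann mapping theorem argument of Appendix B (a convex immersed polygon factors through the abstract disc built from the regions with their prescribed multiplicities, which determines it up to reparametrization) supplies exactly what is needed.
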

\begin{proof}
If the set of polygons $\mc{M}(p_{0,k},p_{k-1,k},\ldots,p_{0,1})$ is non-empty, we choose \\$u\in 
\mc{M}(p_{0,k},p_{k-1,k},\ldots,p_{0,1})\subset C_2^+(\S;\mathbb{Z})$.
Then for any $v\in 
\mc{M}(p_{0,k},p_{k-1,k},\ldots,p_{0,1})$, we have $v=u+h$ with $h\in H$, indeed $u$ and $v$ have equal Euler measure.
From Proposition \ref{compact}, $(u+H )\cap C_2^+(\S;\mathbb{Z})$ is finite which proves the corollary.
\end{proof}
Given an admissible  collection of unobstructed transverse curves $\Gamma=\cup_{i=0}^k\g_i$, it follows from the above corollary that the  set of polygons $\mc{M}(p_{0,k},p_{k-1,k},\ldots,p_{0,1})$ is finite.
\begin{rem}
If we consider an immersed square $u$ and a cylinder $v$ bounded by two homologous curves, whose bounding curves are coincident with two opposite sides of $u$ (see Figure \ref{finiteness}). Then the 2-chain $u+kv$ for any integer $k$, is also an immersed square which contributes to the product $m_3$. This highlights the need for admissibility, which ensures finiteness, so that the higher products are well defined.
\end{rem}
\begin{figure}[ht]
\def\svgwidth{0.45\textwidth}
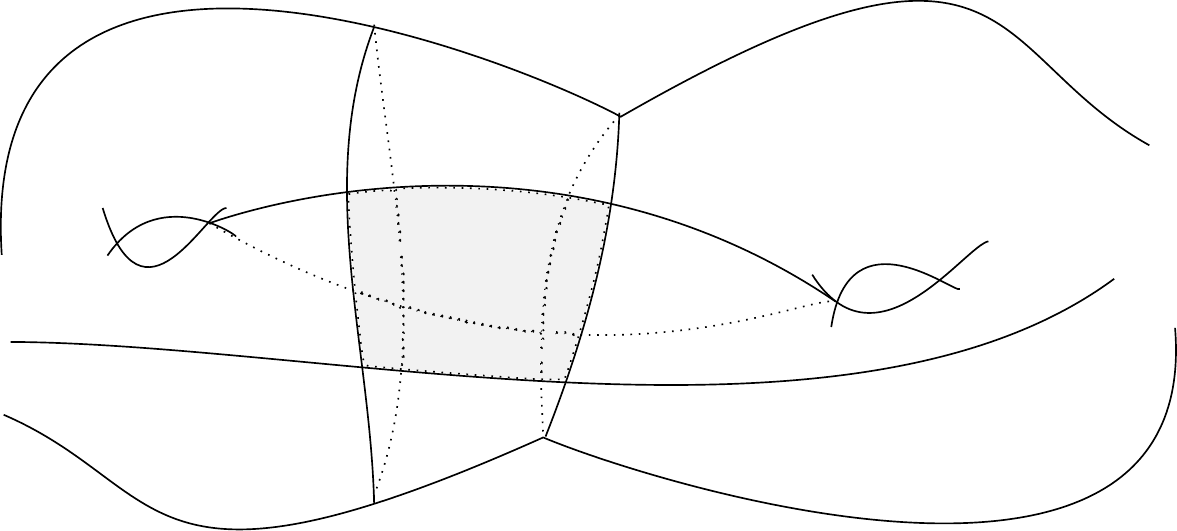
\label{finiteness}
\caption{The 2-chain $u+kv.$}
\end{figure}

\begin{defn}The higher product
$$m_k:CF^*(\g_{k-1},\g_{k})\o\ldots \o CF^*(\g_{1},\g_{2})\o CF^*(\g_{0},\g_{1})\to CF^*(\g_{0},\g_{k}) $$
is defined by the formula: $$m_k(p_{k-1,k},\ldots,p_{0,1})=\mathop{\sum}\limits_{p_{0,k}\in u\in \mc{M}(p_{0,k},p_{k-1,k},\ldots,p_{0,1})}(-1)^{s(u)}p_{0,k},$$
where $s(u)$ is the sum of a contribution of 1 for each marked point on $\partial u$ and a contribution of 1 for each odd degree intersection point $p_{i,j}$, for which the given orientation of $\g_j$ disagrees with its orientation as the boundary of $u$. For instance, $m_1$ is just the differential $d$.
\end{defn}

\begin{lem} The product $m_2:CF^*(\g_{1},\g_{2})\o CF^*(\g_{0},\g_{1})\to CF^*(\g_{0},\g_{2}) $ is a chain map.
\end{lem}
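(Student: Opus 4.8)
The plan is to show that $m_2$ is a chain map in the standard way: verify the Leibniz-type relation
$$d(m_2(q,p)) + m_2(dq,p) + m_2(q,dp) = 0$$
for intersection points $p \in \g_0 \cap \g_1$ and $q \in \g_1 \cap \g_2$, where signs are absorbed into the $\Z/2\Z$-grading convention. Each of the three terms is a signed count of a ``degenerate'' configuration: a triangle $u \in \mc{M}(p_{0,2}, q, p)$ together with a bigon glued along one of its three edges — along the $\g_0$-edge (for $d \circ m_2$), along the $\g_1$-edge (for $m_2(-,dp)$ or $m_2(dq,-)$), or along the $\g_2$-edge. So the content is a one-dimensional moduli space argument: the broken configurations contributing to the left-hand side are precisely the boundary points of the one-dimensional moduli space of immersed triangles with one ``almost-convex'' corner that is allowed to degenerate, and these boundary points match up in canceling pairs.

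Concretely, I would follow the same local picture used in the proof that $d^2 = 0$. Fix $p \in \g_0 \cap \g_1$, $r \in \g_0 \cap \g_2$, and look at the one-parameter family of immersed triangles (or triangle-plus-bigon configurations) from the inputs to $r$. Generically such a family has two types of endpoints: (i) the corner at $q$ becomes non-convex and the triangle splits off a bigon at $q$, giving a term in $d \circ m_2$ (if the bigon attaches along $\g_0$ near $r$) or in $m_2(q, dp)$ / $m_2(dq, p)$ (if it attaches along $\g_1$ near $p$); and (ii) a new non-convex corner $q'$ appears on one of the three edges, exactly as in the $d^2 = 0$ argument, producing a second configuration with the opposite total sign $s(u')$. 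The crucial inputs are: \emph{finiteness}, which is guaranteed by admissibility via Corollary following Proposition \ref{compact} (the relevant $2$-chains all have equal Euler measure, so they differ by elements of $H$ and the compactness argument applies); and the sign bookkeeping, where one checks that the contribution $s(u)$ — one per marked point on $\partial u$, one per odd-degree corner where $\g_j$'s orientation disagrees with the boundary orientation — changes parity exactly once when a bigon is split off or a corner $q'$ is created, so the two members of each canceling pair indeed contribute with opposite signs.

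I expect the main obstacle to be the \textbf{sign verification}, not the topology of the degeneration. The degeneration analysis is essentially a repeat of the $d^2=0$ picture with an extra edge, and admissibility already handles all finiteness/compactness concerns. But the sign rule $s(u)$ mixes two different sources (marked points and odd-degree disagreeing corners), and one must track carefully how each changes under (a) splitting off a bigon at an interior point of an edge versus at a corner, (b) the appearance of a fresh non-convex corner $q'$, and (c) the interplay with the $\Z/2\Z$-grading shift built into $CF^i$. The cleanest route is to normalize once and for all the local model near each corner (as set up before Definition \ref{polygons}, with the polygon forced to lie in even quadrants), deduce the parity change of $s$ in each elementary degeneration from that local model, and then observe that the two endpoints of each arc in the one-dimensional moduli space are related by exactly one such elementary move, hence cancel. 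I would also remark that this is the base case of the general $A_\infty$-relation $\sum m_{k-j+1}(\ldots, m_j(\ldots), \ldots) = 0$, whose proof for all $k$ runs along identical lines (degeneration of the $(k-2)$-dimensional moduli space of $(k+1)$-gons, with finiteness from admissibility), so it suffices here to spell out the $k=2$ case in detail.
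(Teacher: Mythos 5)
Your outline is correct and is exactly the argument the paper intends: the paper in fact states this lemma without proof, treating it as the $k=2$ instance of the $A_\infty$-relation whose mechanism it only spells out for $d^2=0$ and for $k=3$, and your pairing of broken triangle-plus-bigon configurations via the appearance of a non-convex corner is the same cancellation device used in both of those proofs. Two small points: the bigon in each broken configuration is attached at one of the three \emph{corners} of the triangle (its two boundary arcs lying on the two curves meeting there), not along an edge, so the three terms of the Leibniz relation correspond to the three corners $p_{0,1}$, $p_{1,2}$, $p_{0,2}$ rather than to the three edges; and the sign verification you defer is genuinely the only remaining content, which one settles exactly as in the paper's $k=3$ computation by comparing the contributions of $s$ at the two corners involved in the two decompositions of the common $2$-chain.
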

The product $m_2$ is only associative upto homotopy and so a homotopy can be constructed, which is just $m_3$, due to the $A_{\infty}$-relation $\pm m_2(m_2\o 1)\pm m_2(1\o m_2)= d\circ m_3 \pm m_3(d\o 1\o 1)\pm m_3(1\o d\o 1)\pm m_3(1\o 1\o d).$
\begin{lem}
The higher products $m_k$ satisfy the following $A_{\infty}$-equation
$$\mathop{\sum}\limits_{l,s,t}(-1)^{\maltese_t}m_l(p_{k-1,k},\ldots, m_t(p_{s+t,s+t+1},\ldots,p_{s,s+1}),\ldots,p_{0,1})=0$$
where $\maltese_t =t+\mathop{\sum}\limits_{i=1}^t \deg{p_{i-1,i}}.$
\end{lem}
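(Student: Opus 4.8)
The plan is to establish the $A_\infty$-relation by the standard degeneration argument for moduli spaces of polygons, adapted to the combinatorial setting of immersed polygons with convex corners. Fix intersection points $p_{0,1},\dots,p_{k-1,k}$ and a target point $p_{0,k}$, and consider the moduli space $\mc{N}$ of immersed $(k+2)$-gons with one non-convex ("broken") corner whose two sides both lie on the \emph{same} curve $\g_j$ — equivalently, configurations obtained by gluing an immersed $(t+1)$-gon to an immersed $(l+1)$-gon along a matched pair of arcs and intersection points. More precisely, one looks at the space of once-broken configurations $u'\# u$ where $u\in\mc{M}(q,p_{s+t,s+t+1},\dots,p_{s,s+1})$ and $u'\in\mc{M}(p_{0,k},p_{k-1,k},\dots,q,\dots,p_{0,1})$. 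Each such configuration carries the same Euler measure as the $(k+1)$-gons counted by $m_{l+t-1}$, so by Proposition \ref{compact} and its Corollary the total set of these broken configurations is finite. The terms in the $A_\infty$-equation are exactly the signed counts of these broken configurations, one family for each choice of $(l,s,t)$.

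Next I would identify a one-dimensional combinatorial family whose boundary realizes the cancellation. Just as in the $d^2=0$ lemma above, a broken configuration with non-convex corner point $q$ on $\g_j$ has, near $q$, the curve $\g_j$ crossing the boundary; rotating/pushing this corner across $q$ produces a second broken configuration with a \emph{different} non-convex corner $q'$, on a (possibly different) curve, but representing the \emph{same} underlying $2$-chain. Concretely one can set up, for each broken configuration, a small interval of immersed polygons with one degenerate slit, whose two endpoints are the two ways of resolving the slit into honest convex-cornered polygons; these two endpoints are precisely two terms of the $A_\infty$-sum (for suitable $(l,s,t)$ and $(l',s',t')$). A parity computation — matching the marked-point contributions and the orientation-disagreement contributions on the two resolutions, exactly as in the $d^2=0$ proof but now also keeping track of the $\maltese_t$ sign which records the Koszul reordering of the inserted block — shows that the two endpoint contributions appear with opposite signs. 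Summing over the whole finite collection of broken configurations, every term cancels in pairs, giving the stated identity.

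I would organize the sign bookkeeping by first checking it for $k=2$ (the relation $m_2(m_2\o 1)+m_2(1\o m_2)=dm_3\pm m_3(d\o1\o1)\pm\cdots$ quoted in the text), where the only signs are $s(u)$ for bigons and triangles, and then observing that the general $\maltese_t$ sign is forced: it is the unique consistent choice making the gluing of a $t$-gon into position $s$ commute with the degree shifts, and it agrees with Seidel's convention in \cite{seidel2008fukaya}. The appendix's comparison with holomorphic polygons can also be invoked to transport the analytic $A_\infty$-relations, but I prefer the direct combinatorial proof so that the category is self-contained.

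The main obstacle I anticipate is the sign verification, not the finiteness or the geometric degeneration picture. The $s(u)$ convention here is nonstandard (a contribution of $1$ per marked point, plus $1$ per odd-degree corner at which the boundary orientation disagrees with the curve's orientation), and it interacts with the Koszul sign $\maltese_t$ in a way that must be checked on the two resolutions of each slit simultaneously; in particular one must verify that passing the slit across $q$ flips exactly one of: the number of odd-degree disagreement corners, the marked-point count, or the reordering sign — never zero or two. This is the analogue of the parity statement "$s(u)+s(v)$ and $s(u')+s(v')$ differ" in the $d^2=0$ proof, now in the presence of the extra inserted inputs, and it is where the bulk of the careful work lies. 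A secondary, minor point is to confirm that no \emph{triple} intersections or other degenerate corner configurations arise in the one-parameter family — this is handled by the no-triple-intersection hypothesis on $\Gamma$ together with the unobstructedness of each $\g_i$, exactly as in the lemma on $d^2=0$.
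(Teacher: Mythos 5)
Your proposal follows essentially the same route as the paper: a degeneration argument for the one-dimensional moduli spaces of polygons, with pairwise cancellation of the two decompositions of each broken configuration and a parity check on the signs $s(\cdot)$ and $\maltese_t$. The paper likewise only carries out the explicit sign verification for $k=3$ (the two splittings of a square $w$ at $p_{0,2}$ and $p_{1,3}$), deferring the general case to the compactification picture, exactly as you anticipate.
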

The proof relies on an analysis of the one dimensional moduli spaces of rigid polygons
(see Appendix B for details) and their compactifications. For marked points $z_0,\ldots, z_{k-1},w\in \partial D$, fix the images to be the corners $p_{0,1},\ldots, p_{k-1,k},p_{0,k}$ for representatives of a fixed homotopy class $[u]$ with $Ind([u])=2-k$.

For instance, when $k=3$, $$\pm m_2(m_2(\cdot,\cdot),\cdot)\pm m_2(\cdot, m_2(\cdot,\cdot))= d\circ m_3 \pm m_3(d(\cdot),\cdot,\cdot)\pm m_3(\cdot, d(\cdot),\cdot)\pm m_3(\cdot, \cdot,d(\cdot))$$ corresponds to the degenerate configurations of discs which form the boundary of the compactification of the moduli space in question.
In general for $k>3$, the moduli space compactifies to a one dimensional manifold whose boundary points correspond either to a Maslov index 1 disc degenerating at one of the $k+1$ marked points or a pair of discs with each component having at least two marked points. The first case of degeneration corresponds to terms in $A_{\infty}$-equation involving $m_1$, while the second case corresponds to the remaining terms. For $k=3$, the configurations with a strip of Maslov index 1 together with an index -1 disc with 4 marked points gives the right hand side. Whereas those consisting of a pair of index 0 discs with at least two marked points account for the two terms on the left hand side. We give a combinatorial proof for $k=3$.

\begin{figure}
\begin{picture}(0,150)
 \put(-135,10){\def\svgwidth{0.4\textwidth}
 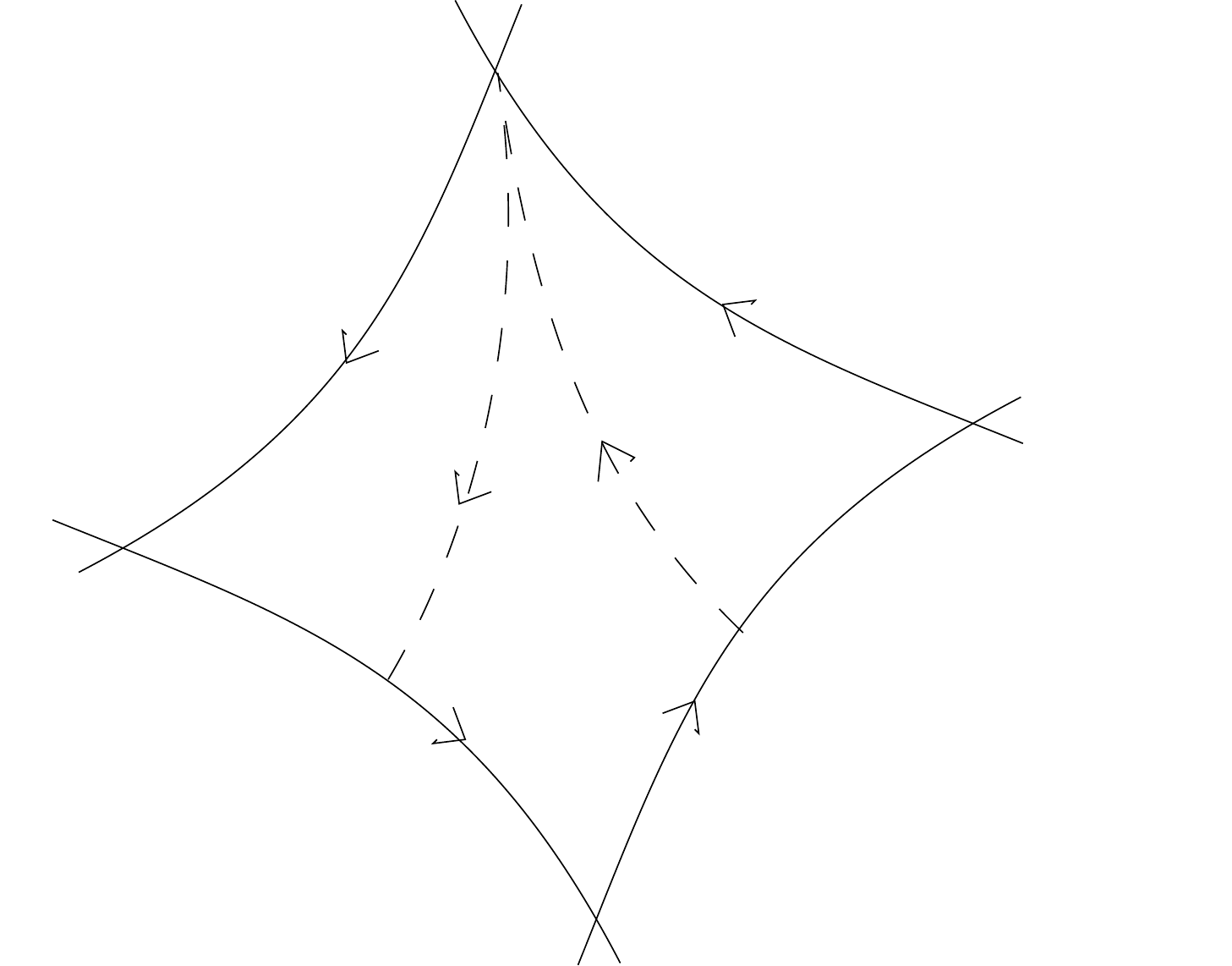} 
 \put(60,55){\def\svgwidth{0.16\textwidth}
 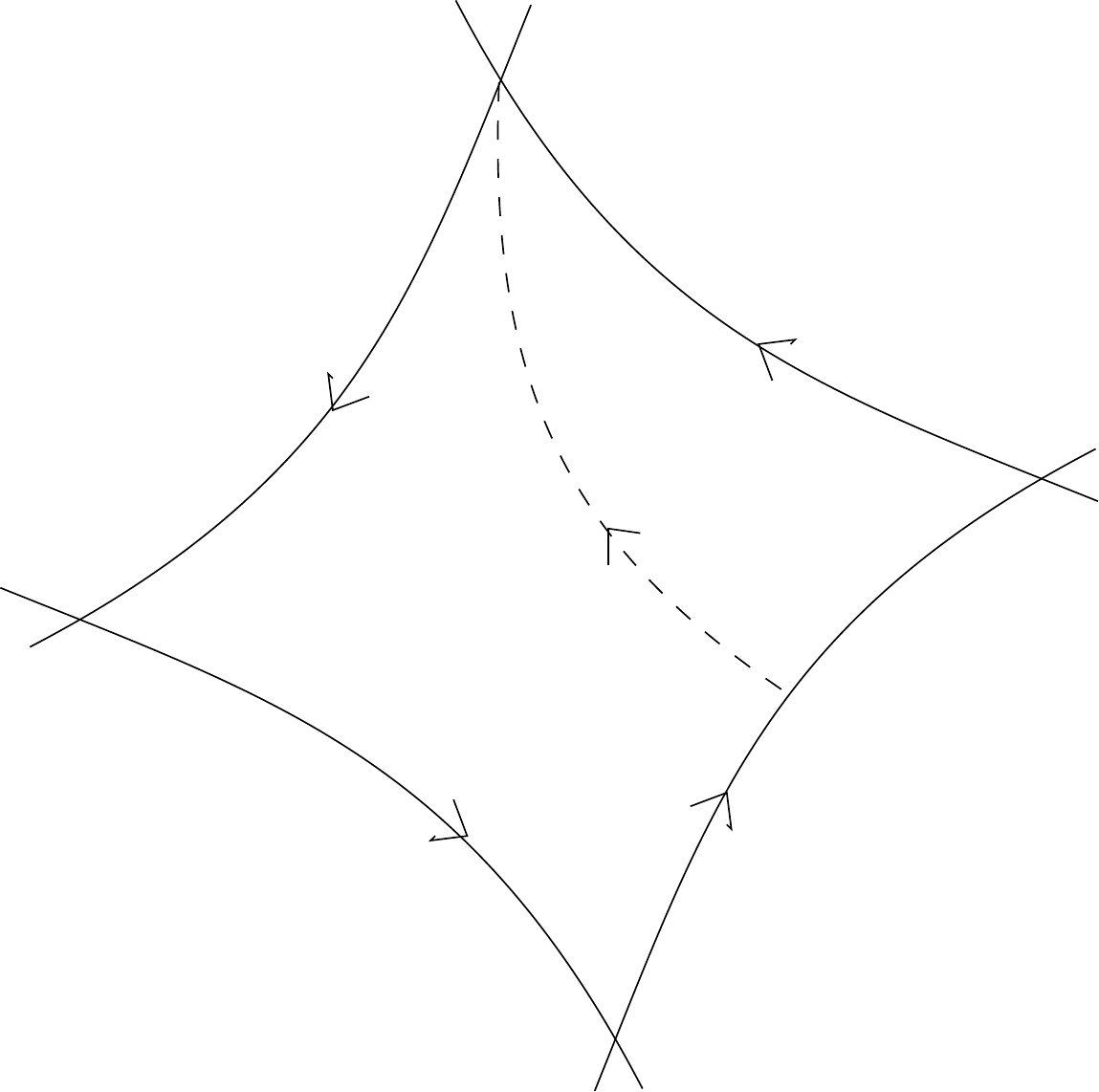} 

 \put(20,15){\def\svgwidth{0.16\textwidth}
 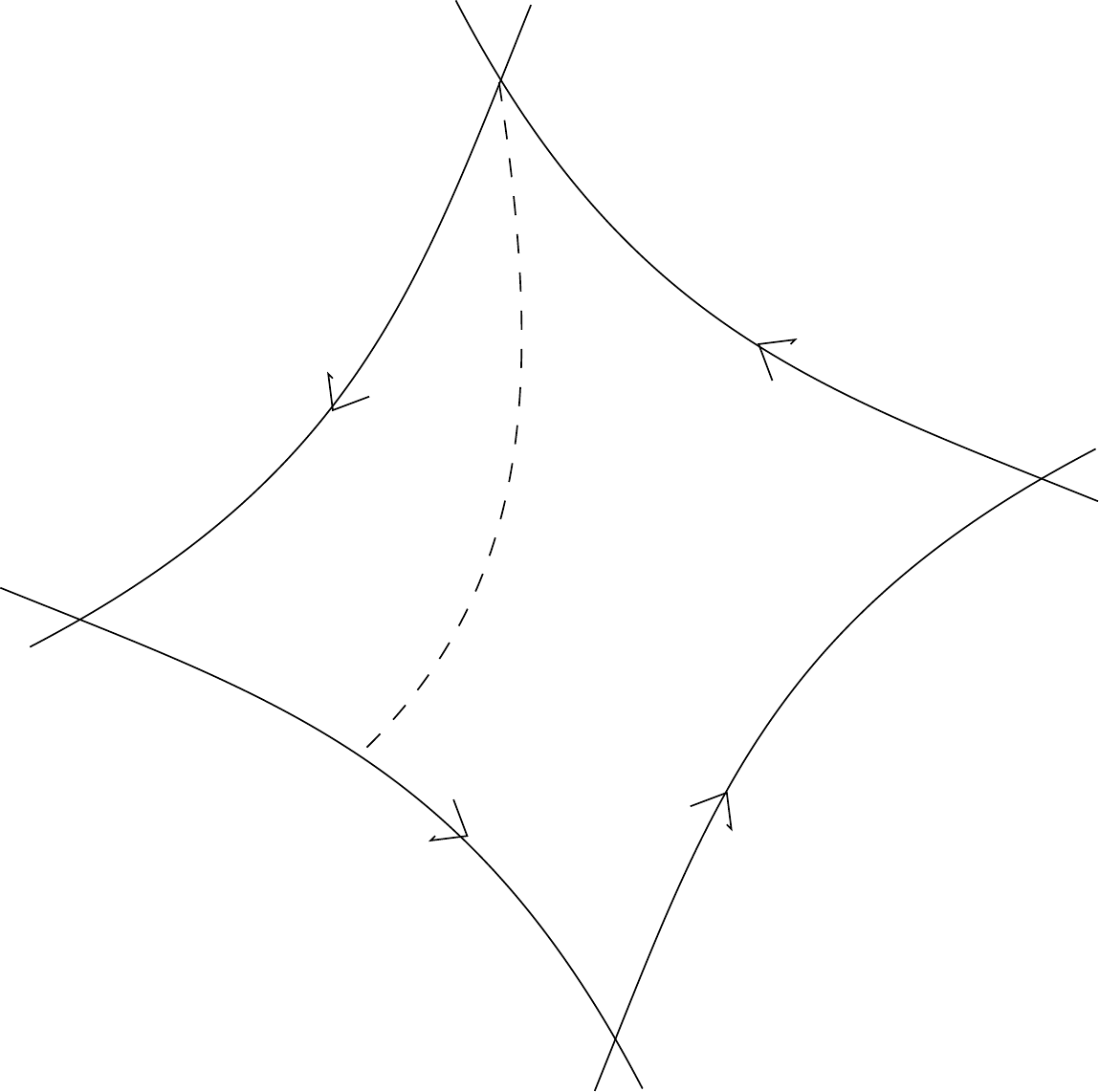} 
\end{picture}

\caption{Case of a square when $k=3.$}
\label{fig:k=3}
\end{figure}

\begin{proof}
By definition of $m_3$, one must show that the contributions of signs for each pair of decompositions $(u,v)$, $(u',v')$ for a given polygon $w$ cancel pairwise. That is, their total signs differ in parity:
$$\maltese_0+s(u)+s(v)+\maltese_{1}+s(u')+s(v')\equiv 1 \,mod\,2.$$
Let there be two points on the boundary of the 4-gon $w$: $p_{0,2},p_{1,3}$ (see Figure \ref{fig:k=3}). These points give two decompositions of $w$ into a pair of polygons $(u,v)$ and $(u',v')$. Since the contributions of all vertices except $p_{0,2},p_{1,3},p_{1,2}$ are the same for $s(u)+s(v)$ and $s(u')+s(v')$, thus we can safely ignore them. Also since all the marked points contribute the same to $s(u)+s(v)$ and $s(u')+s(v')$, they can be ignored as well. The corner $p_{1,2}$ contributes a sign to each $s(u')+s(v')$ and $s(u)+s(v)$, while $p_{0,2}$ contributes a sign to $s(u)+s(v)$. The contribution of $p_{1,3}$ is 1 to $s(u')+s(v')$ and so we have
$$\maltese_0+2+\maltese_{1}+2\equiv 1 (mod2) \Leftrightarrow 1+\deg{p_{0,1}}\equiv 1\,mod\,2.$$
This proves the $A_{\infty}$-relation for $k=3$.
\end{proof}

	Since a curve cannot be transverse to itself, there is a difficulty in defining a notion analogous to that of identity morphism for $A_{\infty}$-categories. Following Abouzaid we also adopt the Kontsevich-Soibelman framework \cite{kontsevich2001homological} of $A_{\infty}$-precategories.
For a detailed treatment of the subject the reader may refer to {\em A proof of the Kontsevich-Soibelman conjecture} by Efimov in \cite{alex2009proof}. 
According to Efimov's terminology, the following defines a non-unital $A_{\infty}$-precategory


(i) The objects are unobstructed immersed curves in $\S$ with a marked point.

(ii) The admissible transverse subsets $\mc{O}b(Fuk(\S))^{tr}_n$ are given by $n$-tuples of admissible curves such that no three curves have a common point.

(iii) Morphisms are Floer chain complexes $CF(\g_1,\g_2)$ generated by intersection points between curves $\g_1$ and $\g_2$. Among the transversal objects, the higher compositions are given by $m_k$.


\noindent The Fukaya category $Fuk(\S)$ of a surface $\S$ is defined by the above data. 

In any $A_{\infty}$-category $\mathcal{C}$, an $m_1$-cohomology class $[e]$ of a morphism $e:A\to A$ is called a `weak identity morphism', if for any closed morphisms $i:A\to B$ and $j:C \to A$, $[e]\cdot [i]=[i]$ and $[j]\cdot [e]=[j]$, for any objects $B,C$ (see \cite[Definition 2.7]{alex2009proof}). In the following section we construct quasi-isomorphisms that serve as weak identity morphisms making the above an $A_{\infty}$-precategory.
\section{weak identity morphisms}\label{quasi}
Assuming the curves we consider are transverse, it is not possible to speak of endomorphisms of objects $CF(\g,\g)$, but one could still speak of quasi-isomorphisms of objects. Although isotopy invariance has been discussed in Theorem \ref{E:isotopy}, in this section we construct an explicit quasi-isomorphism between two $C^1$-close curves and use it to find a sequence of quasi-isomorphisms between any two isotopic curves. 
 These canonical quasi-isomorphisms between $C^1$-close curves play the role of weak identity morphisms providing the extension property required for an $A_{\infty}$-precategory.

We begin with reminding the following definition of quasi-isomorphism between objects of an $A_{\infty}$-precategory in general. 

\begin{defn}
An element $f\in Mor(A,A')$ satisfying $m_1(f)=0$ is called a quasi-isomorphism if the maps $$\begin{array}{cc}
                                                                                                m_2(\cdot,f): & Mor(A',B')\to Mor(A,B') \\
                                                                                                m_2(f,\cdot): & Mor(B,A) \to Mor(B,A')
                                                                                              \end{array}
$$ are quasi-isomorphisms of complexes, whenever $(B,A,A')$ or $(A,A',B')$ are triples of transverse objects.
\end{defn}
We consider the equivalence relation generated by existence of a quasi-isomorphism.
\begin{defn}
Two objects $A$ and $B$ are quasi-isomorphic if and only if there exists a sequence of objects $A=A_0$, $A_1$, \dots , $A_m=B$,
and quasi-isomorphisms $f_i$ either from $A_{i-1}$ to $A_i$, or from $A_i$ to $A_{i-1}$, $0<i\leq m$.
\end{defn}
An $A_{\infty}$-precategory is a non-unital $A_{\infty}$-precategory which satisfies the following extension property (see  \cite[Definition 2.15]{alex2009proof}). For any set of transversal sequences $\Gamma_{\alpha}$ of closed immersed curves and a given curve $\g$, one can always find objects $\g_-, \g_+$ and quasi-isomorphisms $f_-:\g_-\to\g$ and $f_+:\g\to\g_+$ such that all sequences $(\g_-,\Gamma_{\alpha},\g_+)$, $\alpha\in I$ are transversal. These quasi-isomorphisms serve as weak identity morphisms in this $A_{\infty}$-precategory and these are what we want to construct in this section. 
 
We say that a curve $\b$ is \emph{jointly transverse} to $C^1$-close curves $\g_1$ and $\g_1 '$, if $\b$ is tranverse to both curves and $\g_1$, $\g_1'$ are $C^1$-close with respect to $\b$. In particular, there is a canonical bijection between the sets $\b\cap\g_1$ and $\b\cap\g_1'$.
\begin{prop}\label{jointly_prop}
Consider a curve $\g_1$ and let $\g_1'$ be $C^1$-close to $\g_1$, such that $|\g_1\cap\g_1'|=2$. Let the degree zero intersection point be denoted by $p$. Then,

(i) $p$ is closed as a morphism between the curves $\g_1$ and $\g_1'$,

(ii) for any curve $\b$ jointly transverse to $\g_1$ and $\g_1'$, the map $$m_2(p, \cdot):CF(\b,\g_1)\to CF(\b,\g_1')$$ is an isomorphism of complexes.
\end{prop}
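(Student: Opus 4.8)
The plan is to set up a $C^1$-small local model near $\g_1$, understand the two intersection points $\g_1\cap\g_1'$ in that model, and then analyze which $(k+1)$-gons (here triangles) can contribute to $m_2(p,\cdot)$. Write $\g_1'$ as the graph of a small section of the normal bundle of $\g_1$, i.e. on an annular neighborhood $N(\g_1)\cong \g_1\times(-\e,\e)$ the curve $\g_1'$ is $\{(x,\phi(x))\}$ where $\phi$ is a $C^1$-small function with exactly two zeros; the condition $|\g_1\cap\g_1'|=2$ forces $\phi$ to change sign, so it has one transverse zero where it goes $-\to+$ and one where it goes $+\to-$. Checking the local intersection-sign conventions from Section~\ref{infinity}, exactly one of these two points is the degree-zero generator $p$ and the other, call it $q$, is degree one. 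For part~(i), $m_1(p)=d(p)$ counts bigons from $p$; since $\g_1$ and $\g_1'$ are $C^1$-close, any bigon between them stays in $N(\g_1)$ and is hence determined by the sign changes of $\phi$, so the only candidate bigon runs from $q$ to $p$, not from $p$. (One must also rule out ``long'' bigons leaving the neighborhood: a bigon from $p$ to some $q'$ with one boundary arc on $\g_1$ and one on $\g_1'$ would, after pushing $\g_1'$ back onto $\g_1$, give a null-homotopic annular region between $\g_1$ and a $C^1$-close copy, which is excluded either by unobstructedness or directly by the $C^1$-smallness, since its Euler measure/area is too small — this is the point to be careful about.) Hence $d(p)=0$.

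For part~(ii), consider $m_2(p,\cdot):CF(\b,\g_1)\to CF(\b,\g_1')$ for $\b$ jointly transverse to $\g_1,\g_1'$. Since $\b$ is transverse to both and $\g_1,\g_1'$ are $C^1$-close, there is a canonical bijection $\b\cap\g_1\leftrightarrow\b\cap\g_1'$; let $a\mapsto a'$ denote it. The claim is that in the local model each $a$ bounds a unique ``thin'' triangle with corners $a\in\b\cap\g_1$, $p\in\g_1\cap\g_1'$, $a'\in\b\cap\g_1'$ lying in $N(\g_1)$, and that this triangle has convex corners and the correct orientation, so $m_2(p,a)=\pm a'+(\text{possibly other terms})$. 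The other terms must be shown to vanish or to be accounted for: any triangle contributing to $m_2(p,a)$ with output $b'\neq a'$ would have a long boundary arc on $\g_1$ and a long arc on $\g_1'$, and collapsing $\g_1'$ onto $\g_1$ turns it into a bigon-like region between $\b$ and $\g_1$ of arbitrarily small Euler measure, again impossible. So in the ordering $a\mapsto a'$ the matrix of $m_2(p,\cdot)$ is triangular with $\pm1$ on the diagonal — in fact, once the signs are pinned down, it is the canonical bijection up to sign and strictly lower-order corrections — hence an isomorphism of $\Z$-modules. Finally, since $m_2(p,\cdot)$ is a chain map (the relevant $A_\infty$-relation together with $m_1(p)=0$), an isomorphism of the underlying modules that is a chain map is an isomorphism of complexes.

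The key remaining point is to fix signs: one checks that the sign $s(u)$ of the thin triangle $u_a$ from $a$ to $a'$ through $p$ is independent of $a$ (marked points on $\g_1,\g_1'$ can be chosen in the $C^1$-small region so they either both contribute or cancel, and the orientation-disagreement contributions are the same for every thin triangle), so up to an overall sign $m_2(p,\cdot)$ is exactly the canonical bijection plus strictly upper-triangular terms, which does not affect invertibility.

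\medskip

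\textbf{Main obstacle.} The delicate step is the exclusion of ``long'' holomorphic/immersed polygons that leave the $C^1$-neighborhood $N(\g_1)$ — both the long bigons from $p$ in part~(i) and the off-diagonal triangles in part~(ii). The clean way to handle this is an area/Euler-measure estimate: making $\g_1'$ sufficiently $C^1$-close forces any polygon with boundary arcs on both $\g_1$ and $\g_1'$ and with at least one corner at $p$ or $q$ to have Euler measure bounded by that of the thin model polygon, while admissibility (Proposition~\ref{compact}) bounds the complexity from the other side; combining these rules out all but the thin polygons. Alternatively one argues topologically using unobstructedness, since a long polygon would produce a contractible bigon/annulus between $\g_1$ and a pushoff.
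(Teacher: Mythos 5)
Your part (i) rests on a false description of the moduli space of bigons. With the paper's conventions the differential has degree $+1$, so the bigons counted by $d(p)$ run \emph{from} the degree-zero point $p$ \emph{to} the degree-one point $q$; and both complementary regions of the thin annulus between $\g_1$ and $\g_1'$ are such bigons (one traverses $\g_1'$ with its orientation, the other against it). The assertion that ``the only candidate bigon runs from $q$ to $p$, not from $p$'' is therefore wrong on two counts: there are two bigon regions, and both are bigons from $p$ to $q$. The actual mechanism --- and the paper's proof --- is cancellation, not emptiness: the two bigons contribute with opposite signs, giving $m_1(p)=-q+q=0$. (If your description were correct, $d(q)$ would contain $\pm p$ and the differential would lower degree, contradicting the stated conventions and the computation $HF(\g,\g')\cong\mathbb{Z}^2$ used later in the paper.)

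For part (ii) your plan --- thin triangles realize the canonical bijection $a\mapsto a'$, and everything else must be excluded --- matches the paper's intent; its proof is literally ``follows by definition of jointly transverse.'' But the tool you propose for killing long polygons is not available here: this is the topological Fukaya category, there is no area form, and the Euler measure of a convex $n$-gon is the constant $1-\frac{n}{4}$ no matter how far the polygon travels, so ``arbitrarily small Euler measure'' is not an obstruction, and Proposition \ref{compact} gives only finiteness of the count, not absence of extra terms. What is supposed to do the work is joint transversality itself ($\g_1,\g_1'$ being $C^1$-close \emph{with respect to} $\b$) together with the input local model at the corner $p$; if you want to make the exclusion airtight, argue from the corner conditions rather than from a size estimate.
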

\begin{proof}
Let $q$ denote the odd degree intersection point. The curves $\g_1$ and $\g_1'$ are closed, hence $m_1(p)=-q+q=0$. The second implication follows by definition of jointly transverse.  \end{proof}

In the rest of the section we prove two statements. First we generalise the second assertion in the previous proposition, to the case of any two $C^1$-close curves (see Proposition \ref{C^1}) and then we show that for any two isotopic curves there is an isomorphism at the level of homology (see Theorem \ref{isotopic}).
To this end, we prove that there are no non-trivial higher products in our setting, and from the $A_{\infty} $ equation the associativity of $m_2$ follows.
\begin{lem}
Consider $C^1$-close curves $\g_1$ and $\g_1'$.  Given a finite collection of mutually transverse curves $\{\a_j\}\cup \g_1$, the moduli space of polygons (4-gons and higher) through $p$ is empty.
\end{lem}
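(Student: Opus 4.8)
My plan is to argue in the universal cover, exploiting that a $C^1$-small pushoff creates only very thin bigons, which cannot carry a corner of a genuine $(k{+}1)$-gon with $k\ge 3$. First I record the local picture at $p$. Since $\g_1,\g_1'$ are $C^1$-close and $p$ is the degree-zero, hence negative-sign, intersection point, the two branches of $\g_1\cup\g_1'$ through $p$ span a very small angle. The sign conventions built into $m_k$ — at a corner $p_{i,i+1}=p$ one sends $\g_1$ to the $x$-axis, $\g_1'$ to the $y$-axis, and the polygon occupies the second or fourth quadrant, with the opposite convention at an output corner — then force the corner of $u$ at $p$ into one of the two thin wedges cut out by $\g_1\cup\g_1'$, that is, into one of the two embedded bigons cobounded by $\g_1$ and $\g_1'$. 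This is the same bookkeeping already used to prove $d^2=0$: bigons sit in the thin wedges at their corners, and it is precisely the negative sign of $p$ that places the corner of $u$ there as well. Call this bigon $\Delta$, with corners $p,q$ and sides $a\subset\g_1$, $a'\subset\g_1'$; its interior angle at $p$ shrinks with the size of the pushoff.

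Next I lift. Choose lifts so that the two edges of the lift $\tilde u$ at the chosen lift $\tilde p$ of $p$ lie on lifts $\tilde\g_1$ and $\tilde\g_1'$. Because $\g_1'$ is isotopic and $C^1$-close to $\g_1$, the lines $\tilde\g_1,\tilde\g_1'$ share the same pair of ideal endpoints, meet in exactly the two points $\tilde p$ and a lift $\tilde q$ of $q$, and cobound two thin embedded lunes; $\tilde\Delta$, the lift of $\Delta$ with corner $\tilde p$, is one of them, and $\tilde u$ near $\tilde p$ lies in $\tilde\Delta$ with its edges at $\tilde p$ running along $\tilde a\subset\tilde\g_1$ and $\tilde a'\subset\tilde\g_1'$.

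Now comes the confinement step. Follow $\partial\tilde u$ away from $\tilde p$ along $\tilde\g_1$: it stays on $\tilde a$ until the next corner $x\in\tilde\g_1\cap\tilde\a_j$ for some curve $\a_j$ of the collection. By convexity at $x$ the following edge, carried by the transverse lift $\tilde\a_j$, turns into the thin lune $\tilde\Delta$ and hence immediately crosses the opposite wall $\tilde a'\subset\tilde\g_1'$; since $\g_1$ and $\g_1'$ are not consecutive around $u$, that crossing is not a corner of $u$, and its sign and orientation are incompatible with $u$ lying on the side prescribed by convexity — this is exactly the elimination performed in the $d^2=0$ argument. Thus the first corner met from $\tilde p$ along $\tilde\g_1$ cannot be an $\a_j$-crossing; as it also cannot be $\tilde q$ unless $\g_1'$ recurs among the curves of $u$, we reach a contradiction whenever $\g_1$ and $\g_1'$ each occur once. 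The residual configurations, where $\g_1$ or $\g_1'$ recurs, or where the second corner of $u$ on $\g_1\cup\g_1'$ is $q$, leave $u$ with at most the two corners $p,q$ on $\g_1\cup\g_1'$ and visiting only $\g_1,\g_1'$ between them, so $u$ is a bigon or a triangle, which is impossible for $k\ge 3$.

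The decisive point, and the one I expect to be most delicate, is the convexity and sign computation showing that a convex corner of $u$ on one wall of a thin lune forces a non-corner crossing on the other wall with the wrong orientation; I expect this to be the same bookkeeping as in the proofs of $d^2=0$ and of the $k=3$ $A_\infty$ relation, now applied to the pair $(\g_1,\g_1')$, together with the fact that the two walls are $C^1$-close. As a consistency check, a rigid $(k{+}1)$-gon has Maslov index $2-k<0$ when $k\ge 3$, so by the index computation of the Appendix no such polygon can pass through the canonical morphism $p$.
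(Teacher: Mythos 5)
Your reduction to the thin lune at $p$, and the observation that the $\a_j$-edge adjacent to the corner $x\in\g_1\cap\a_j$ must immediately cross $\g_1'$ at a nearby point $x'$, both match the paper's setup. The gap is exactly in the step you flag as decisive: you assert that this non-corner crossing of $\g_1'$ has ``sign and orientation incompatible with $u$ lying on the side prescribed by convexity,'' but no such local incompatibility exists. A boundary edge of an immersed polygon may perfectly well cross one of its other boundary curves transversally away from the corners --- this is precisely what happens to the square $w$ in the paper's proof of the $k=3$ $A_\infty$-relation, where the non-corner crossings $p_{0,2}$ and $p_{1,3}$ are used to decompose $w$, not to rule it out. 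Likewise the $d^2=0$ argument you invoke does not show such configurations are impossible; it shows they come in pairs whose signed contributions cancel, which is a statement about a signed count, not about emptiness of a moduli space. So the contradiction you need at $x'$ is not established, and I do not believe it can be obtained by local bookkeeping. The paper's proof is instead global: it forms the triangle $u_1$ with vertices $x$, $x'$, $p$ cut out of the lune by $\a_j$, takes the component $D'$ of $u^{-1}(u_1)$ containing the marked point over $p$, and uses the absence of critical points of $u$ along the part of $\partial D'$ interior to $D$ to force $D'=D$, so that $u$ is that triangle; this is what excludes four or more corners.

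Two further points. In the universal cover the lifts $\tilde\g_1$ and $\tilde\g_1'$ with common ideal endpoints meet in infinitely many points (one lift of $p$ and one of $q$ per fundamental domain of the strip they cobound), not in exactly two; this does not damage the local picture you use, but the claim as written is false. More seriously, the closing ``consistency check'' is vacuous: by the Appendix every immersed convex $(k+1)$-gon has index $2e(u)-\tfrac{k-1}{2}=2-k$, so negative index is the defining condition for \emph{all} polygons contributing to $m_k$ with $k\geq 3$ (for instance $m_3$ counts index $-1$ squares and is generally nonzero), and therefore cannot by itself show that the moduli space through $p$ is empty.
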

\begin{proof}
Let the curves $\g_1$ and $\g_1'$ intersect in points $p$ and $q$, we denote by $v$ the bigon from $p$ to $q$.
For such a given finite collection of mutually transverse curves $\{\a_j\}\cup\g_1$ (here $\g_1$ and $\g_1'$ are $C^1$-close with respect to $\{\a_j\}$), the above assertion amounts to showing that $$m_k(x_k,\ldots,x_{d+2},p,x_{d},\ldots,x_1)=0$$ whenever
$x_{i}\in \a_{n_i}\cap \a_{n_{i+1}}$ and $k>2$.
Let $u$ be an element of the corresponding moduli space, which agrees with $v$ near $p$. Being $C^1$-close to $\g_1$, the curve $\g_1'$ intersects $\a_{n_d}$ at a point $x_d'$ splitting $v$ into two triangles.

Let us denote by $u_1$ the triangle with vertices $x_d, x_d' $ and $p$. The polygon $u$ is a map $D\to \S$. Consider the region $D'$ in $D$ corresponding to the inverse image of $u_1$ under $u$, containing the marked point $z_p\in \partial D$ that maps to $p.$ By our assumption there are two points $z_1$ and $z_2$ on $\partial D'$ and segments from $z_p$ to $z_i$ along $\partial D$ which map to $\g_1\cup \a_{n_{d}}$ and $\g_1'$ respectively.
The map $u$ has critical points only at inverse images of  $p,x_d,x_{d+2}$ and at marked points that are inverse images of intersection points between $\a_{n_i}$ and $\a_d$. Since $\g_1$ and $\g_1' $ are $C^1$-close, no marked points lie in $v$, hence no other critical points lie on the segment connecting $z_1$ and $z_2$ in $\partial D'$. The map $u$ is thus an immersion on the remaining segment, but this means that $z_1$ and $z_2$ are in fact the same point. This proves there are no more than three marked points, thus no 4-gons (or higher polygons) either. Therefore $m_{\geq3}=0$.
\end{proof}

Let $\{u_k\}_{k=0}^{2n-1}$ denote bigons with disjoint interiors bounded by isotopic $C^1$-close curves $\g_1$ and $\g_{1}'$. Further let each of these bigons lie between several even and odd degree intersection points $\{p_i\}_{i=0}^{n-1}$ and $\{q_i\}_{i=0}^{n-1}$ respectively.
\begin{figure}[ht]
\def\svgwidth{0.55\textwidth}
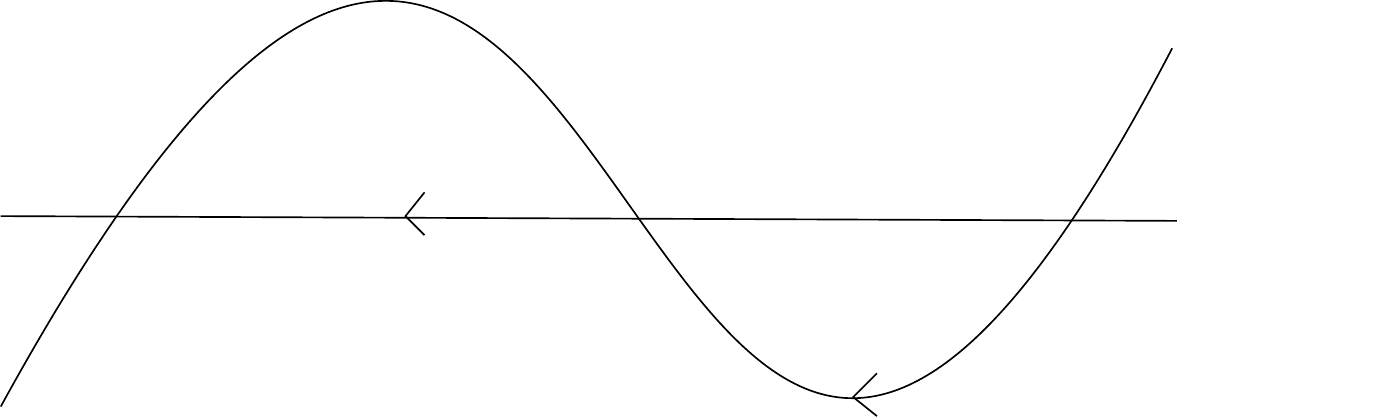
\caption{}
\label{closed}
\end{figure}

For each $p_i$ (see Figure \ref{closed}) one can write $$d(p_i)=-q_{i-1}+q_i.$$  We define $$f_{1,1'}=\mathop{\sum}\limits_{i=0}^{n-1}p_i$$ and obtain $d(f_{1,1'})=m_1(f_{1,1'})=0$, which follows from the fact that the curves are closed. 
Here as before we implicitly denote by $f_{i,j}$ the morphism between objects $\g_i$ and $\g_j$. 

\begin{lem}
Let $\g_1$ and $\g_2$ be isotopic closed curves bounding immersed discs with disjoint interiors, then 
$$m_2(f_{2,1'},f_{1,2})=f_{1,1'}.$$

\end{lem}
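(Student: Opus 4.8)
Here is my plan for proving the identity $m_2(f_{2,1'},f_{1,2})=f_{1,1'}$.

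\textbf{Setup.} The plan is to choose representatives so that $\g_1$, $\g_2$, $\g_1'$ are mutually $C^1$-close curves with $\g_1$ and $\g_2$ isotopic and bounding immersed discs with disjoint interiors, and likewise for $\g_2$ and $\g_1'$. By the previous lemma, the Floer cocycles are $f_{1,2}=\sum_i a_i$ and $f_{2,1'}=\sum_j b_j$, where the $a_i\in\g_1\cap\g_2$ and $b_j\in\g_2\cap\g_1'$ are the degree zero intersection points, one in each bigon between the respective pairs of curves. I would first set up a clean local picture: along a common parameter, the three curves are graphs of three $C^1$-close functions, so the pattern of intersection points of $(\g_1,\g_1')$ is governed by the relative position of the graphs of $\g_1$ and $\g_1'$, while those of $(\g_1,\g_2)$ and $(\g_2,\g_1')$ are governed by the two intermediate comparisons. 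The degree zero point $p_i\in\g_1\cap\g_1'$ sits where $\g_1$ crosses $\g_1'$ positively (in the relevant sense), and near each such crossing there is, by the $C^1$-closeness, a unique way the graph of $\g_2$ interpolates.

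\textbf{Triangle count.} The key step is to show that $\mc{M}(p, b, a)$ — the set of triangles with convex corners at a degree zero point $p\in\g_1\cap\g_1'$, then $b\in\g_2\cap\g_1'$, then $a\in\g_1\cap\g_2$ — contains exactly one element when $a,b,p$ are "the three degree zero points belonging to the same bigon region" and is empty otherwise. This is where I would do the real work: because $\g_1,\g_2,\g_1'$ are pairwise $C^1$-close, any triangle contributing to $m_2$ must be $C^1$-small, hence supported in a thin neighborhood of the curves; an argument parallel to the "no higher polygons" lemma already proved (tracking critical points of the map $u:D\to\S$ and using that no marked points lie in the thin bigons) forces the triangle to be an embedded small disc cut out by the three graphs near a single mutual near-crossing, and there is exactly one such. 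I would phrase this as: the unique triangle through $a_i$ is the small disc bounded by arcs of $\g_1$, $\g_2$, $\g_1'$ lying in the $i$-th bigon between $\g_1$ and $\g_1'$, with convex corners at $a_i$, $b_i$, $p_i$ after relabeling so indices match.

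\textbf{Sign bookkeeping.} Having identified the triangles, the remaining step is to check that each contributes with coefficient $+1$ to the coefficient of $p_i$, so that $m_2(f_{2,1'},f_{1,2})=\sum_i p_i=f_{1,1'}$. Here I would unwind the definition of $s(u)$: there are no marked points on the boundary of these small triangles (the marked points of all three curves can be pushed outside the thin region, exactly as in the construction of $f_{1,1'}$), so $s(u)$ only counts odd degree corners at which the curve's given orientation disagrees with the boundary orientation of $u$; since all of $a_i$, $b_i$ are degree zero (even) intersection points and $p_i$ is the degree zero corner of the output, the degree zero corners contribute nothing, and one checks directly from the chosen local models (the "even quadrant" conventions fixed before Definition \ref{polygons}) that the orientations are compatible, giving sign $+1$. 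The main obstacle is the triangle-counting step: making rigorous that $C^1$-closeness of all three curves, together with the absence of marked points in the thin bigons, forces every contributing triangle to be one of these $n$ standard small discs and nothing else — essentially an adaptation of the immersion-vs-critical-point argument from the preceding lemma, applied now to $3$-gons rather than to $(\geq 4)$-gons.
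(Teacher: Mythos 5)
Your overall strategy coincides with the paper's: expand $m_2(f_{2,1'},f_{1,2})$ as a double sum over the degree-zero generators of $CF(\g_1,\g_2)$ and $CF(\g_2,\g_1')$, show that exactly one triangle contributes to each degree-zero point of $\g_1\cap\g_1'$, and read off the coefficient. The paper's own justification of the triangle count is a single sentence (``the only term that survives is the one corresponding to the bigon on which $p$ lies''), and it settles for the conclusion $\pm p$.

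Two caveats on your execution. First, you quietly strengthen the hypotheses: the lemma assumes only that $\g_1$ and $\g_2$ are isotopic and bound immersed discs with disjoint interiors, while $\g_1'$ is a $C^1$-perturbation of $\g_1$; it does \emph{not} assume $\g_2$ is $C^1$-close to the other two. Your classification of contributing triangles (three graphs of $C^1$-close functions over a common parameter, one small embedded disc per mutual near-crossing) uses that mutual closeness essentially. In the paper's setting the bigons between $\g_1$ and $\g_2$ may be large; the relevant triangle is a piece of such a bigon cut off by the nearby copy $\g_1'$, with a possibly long edge on $\g_2$, and ruling out other, non-local triangles in that generality is not covered by your ``thin neighborhood'' argument. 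As written, your plan proves the identity only in the special case where all three curves are mutually $C^1$-close. Second, your sign bookkeeping claims strictly more than the paper establishes (the paper stops at $\pm p$). The step ``the marked points of all three curves can be pushed outside the thin region'' is exactly what needs the triangles to be small, so it again leans on the extra closeness hypothesis; if the $\g_2$-edge of a surviving triangle is a long arc, one cannot a priori keep the marked point of $\g_2$ off it, and a stray marked point flips the sign of that term, breaking the exact equality $m_2(f_{2,1'},f_{1,2})=f_{1,1'}$. So either restrict to the mutually close case (and say so), or supply a separate argument for the general triangle count and for the placement of marked points.
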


\begin{figure}[ht]
\def\svgwidth{0.65\textwidth}
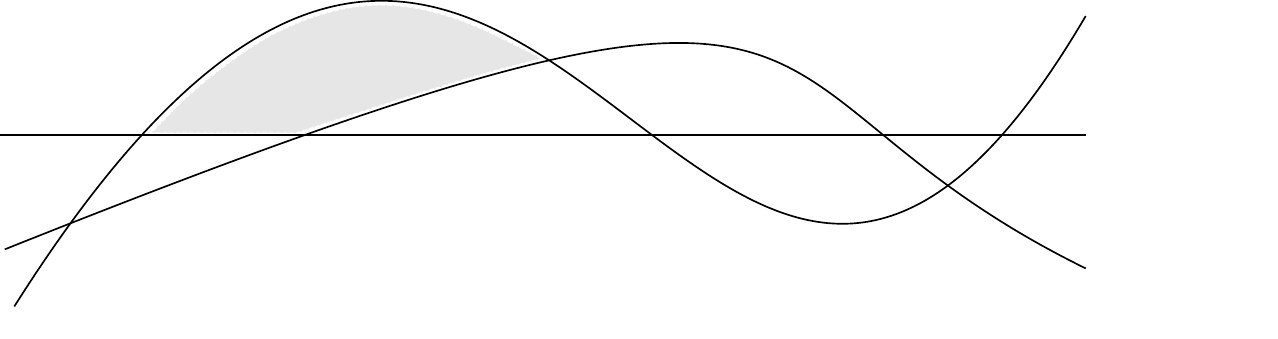
\caption{}
\label{fig:figure_5_v7}
\end{figure}

\begin{proof}We denote the bigons bounded by $\g_1\cup\g_2$ with corners $p_i$ and $q_i$ by $u_k$ as in the discussion above. The corresponding bigons bounded by the curves $\g_1'$ and $\g_2$ are denoted by $u_k'$ having corners $p_i'$ and $q_i'$ which are $C^1$-close to the corners of $u_k$. We have to show $$\mathop{\sum}\limits_{i,j=0}^{n-1}m_2(q_j',p_i)=\pm p$$
Indeed, the only term that survives on the left hand side is the one corresponding to the bigon $u_k$ on which $p$ lies. Thus the result is $p$ upto a sign.
\end{proof}

\begin{prop}\label{C^1}
Let $\g_1$ and $\g_1'$ be $C^1$-close isotopic curves  bounding $m$ bigons as above. Then $f_{1,1'}$ is a closed morphism and,
for any curve $\b$ jointly transverse to $\g_1$ and $\g_1'$, $$m_2(f_{1,1'}, \cdot):CF(\b,\g_1)\to CF(\b,\g_1')$$ is an isomorphism of complexes.

\end{prop}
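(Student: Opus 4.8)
The plan is to reduce the general case to the model situation already treated in Proposition \ref{jointly_prop}, using the bigon decomposition $\{u_k\}_{k=0}^{2n-1}$ of the thin region between $\g_1$ and $\g_1'$. First I would verify that $f_{1,1'}$ is closed; this has essentially been done in the discussion preceding the statement, where the identity $d(p_i)=-q_{i-1}+q_i$ was observed for each even-degree corner, so that $m_1(f_{1,1'})=\sum_i(-q_{i-1}+q_i)=0$ by the telescoping cancellation. (One should also check that there are no bigons running the ``other way'', i.e.\ with both corners odd-degree contributing to $m_1$ of the odd generators, but this is not needed for $f_{1,1'}$ to be closed.) The substantive part is the second assertion, that $m_2(f_{1,1'},\cdot)$ is an isomorphism of complexes $CF(\b,\g_1)\to CF(\b,\g_1')$.

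For this, I would first observe that by the definition of \emph{jointly transverse}, there is a canonical bijection $\b\cap\g_1\leftrightarrow\b\cap\g_1'$ obtained by following each intersection point across the thin region; call it $x\mapsto x'$. I claim $m_2(f_{1,1'},x)=\pm x'$ for each such generator, with the remaining (``long'') triangles contributing nothing. Indeed, a triangle contributing to $m_2(f_{1,1'},x)$ has one corner at some $p_i$, one corner at $x\in\b\cap\g_1$, and one corner at some $y\in\b\cap\g_1'$; its $\g_1$-edge and $\g_1'$-edge lie in the thin band while its $\b$-edge runs between $x$ and $y$. Because $\g_1$ and $\g_1'$ are $C^1$-close with respect to $\b$, the only such triangle with image contained in an arbitrarily small neighbourhood of the band is the small one at $x$ with $y=x'$, occupying (part of) the single bigon $u_k$ on which $x'$ lies; any triangle forced to travel along $\b$ to reach a different $y$ would have area bounded below and its $\b$-edge would have to re-enter the band, contradicting $C^1$-closeness. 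This is exactly the local computation carried out in the preceding lemma (the one showing $m_2(f_{2,1'},f_{1,2})=f_{1,1'}$) and in the proof of Proposition \ref{jointly_prop}(ii); I would cite those and adapt the bookkeeping. The upshot is that in the bases $\{x\}$ and $\{x'\}$ the map $m_2(f_{1,1'},\cdot)$ is (signed-)diagonal, hence a bijection, and since $m_1(f_{1,1'})=0$ together with the $A_\infty$-relation $m_1 m_2(f_{1,1'},\cdot)=\pm m_2(f_{1,1'},m_1(\cdot))\pm m_2(m_1 f_{1,1'},\cdot)=\pm m_2(f_{1,1'},m_1(\cdot))$ shows it is a chain map, it is an isomorphism of complexes.

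A complementary point worth recording: admissibility of the triple $(\b,\g_1,\g_1')$ is what guarantees the sums defining $m_2(f_{1,1'},\cdot)$ are finite (Corollary after Proposition \ref{compact}), so the map is well defined on the nose and no completion is required.

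The main obstacle I anticipate is the sign bookkeeping rather than the geometry: one must check that the diagonal entries $m_2(f_{1,1'},x)=\pm x'$ are all \emph{coherent} in the sense that the resulting map commutes with the differentials $m_1$ on $CF(\b,\g_1)$ and $CF(\b,\g_1')$ — equivalently, that the sign $(-1)^{s(\text{small triangle at }x)}$ varies with $x$ exactly as dictated by the bigon differentials $d(p_i)=-q_{i-1}+q_i$. This is the same parity computation as in the $d^2=0$ lemma and the $k=3$ case of the $A_\infty$-relation, counting contributions of the marked point (if $p_i$ carries one), of the odd-degree corner $x$ or $x'$ when the boundary orientation of $\g_1$ resp.\ $\g_1'$ disagrees with its given orientation, and matching them against the sign in $f_{1,1'}=\sum_i p_i$. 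Once the diagonal form with compatible signs is established, invertibility is immediate since a signed permutation matrix over $\Z$ is invertible over $\Z$.
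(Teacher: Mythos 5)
Your proposal is correct and follows essentially the same route as the paper: closedness of $f_{1,1'}$ via the telescoping sum $d(p_i)=-q_{i-1}+q_i$, and the isomorphism via the canonical bijection $x\mapsto x'$ coming from joint transversality, under which $m_2(f_{1,1'},\cdot)$ is signed-diagonal. You actually supply more detail than the paper's one-line proof (which simply invokes the bijection of intersection points); just note that your appeal to ``area bounded below'' is out of place in this arealess setting --- the exclusion of long triangles should rest purely on the $C^1$-closeness of $\g_1,\g_1'$ relative to $\b$, as in the paper's definition of jointly transverse.
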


\begin{proof}
  Since $\g_1$ and $\g_1'$ are $C^1$-close this gives a bijection between points in $\b\cap\g_1$ and $\b\cap\g_1'$, and consequently an isomorphism of chain complexes $CF(\b,\g_1)\cong CF(\b,\g_1')$. 
\end{proof}
\begin{prop}\label{jointly_outside}
For any $\b$ jointly transverse to $\g_1$ and $\g_1'$ outside the neighbourhood of a bigon between $\g_1$ and $\g_1'$
 where it has two additional intersection points $\{a,b\}$ with one of the curves (see Figure \ref{jointly_local}), the map $m_2(\cdot,f_{1,2})$ induces an isomorphism in homology.

\begin{figure}[ht]
\def\svgwidth{0.55\textwidth}
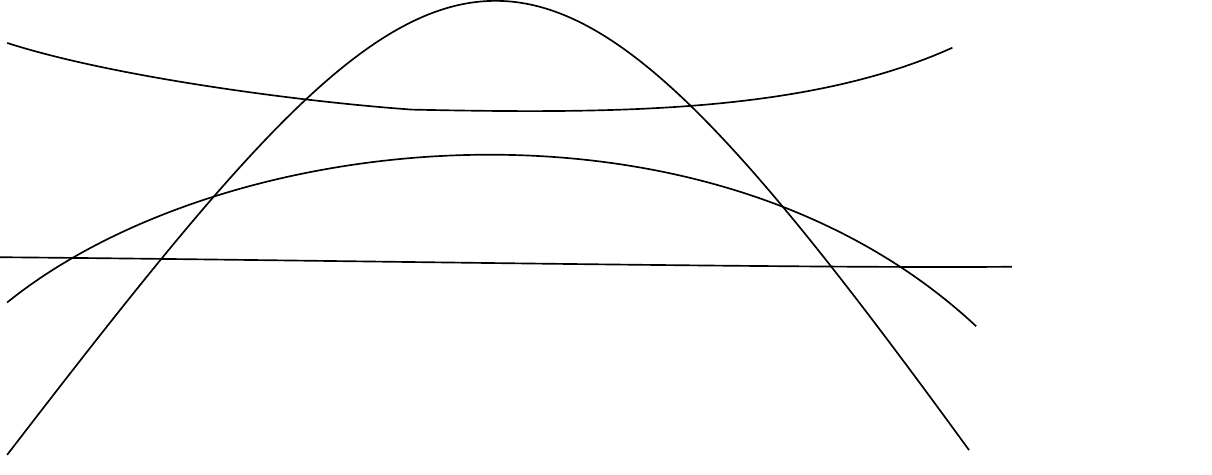
\caption{}
\label{jointly_local}
\end{figure}

\end{prop}
\begin{proof}
If $\b$ is jointly transverse to $\g_1$ and $\g_2$ in the complement of one bigon where it has two additional intersection points $\{a,b\}$ with one of the curves, say $\g_1.$  We can find another curve $\g_1'$ which is $C^1$-close to $\g_1$ such that $\b$ is jointly transverse to $\g_1,\g_1'$. There is thus an isomorphism of complexes $CF(\b,\g_1')\cong CF(\b,\g_1)$.  This means that the composition $m_2(m_2(\cdot, f_{1,2}),f_{1,1'})$ is an isomorphism of complexes which implies injectivity of $m_2(\cdot, f_{1,2})$. In the absence of higher polygons, the product $m_2$ is associative and injectivity at the level of homology follows. Surjectivity can also be shown by a similar argument as well as isomorphism for $m_2(f_{1,2},\cdot)$.
\end{proof}

We generalise the above statement for any isotopic closed curves and show that there is an isomorphism at the level of homology.
\begin{theorem}\label{isotopic}
If $\g_1$ is isotopic to $\g_2$ then these curves are quasi-isomorphic objects of the Fukaya category.
\end{theorem}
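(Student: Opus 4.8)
The plan is to reduce the general case of two isotopic closed curves $\g_1 \simeq \g_2$ to the local situation already handled, namely Proposition~\ref{C^1} for $C^1$-close curves and Proposition~\ref{jointly_outside} for the elementary modification that creates a bigon. Using \cite[Section 3.3]{fathi1991travaux} as in the proof of Theorem~\ref{E:isotopy}, any isotopy from $\g_1$ to $\g_2$ factors, up to ambient isotopy, as a finite composition of two kinds of elementary moves: a $C^1$-small perturbation (an ambient isotopy supported near the curve, producing a $C^1$-close curve with a controlled collection of bigons between old and new position), and an elementary isotopy that pushes a strand of the curve across an intersection point with a fixed auxiliary curve, creating or cancelling a pair of intersection points joined by a single bigon. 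So it suffices to produce a quasi-isomorphism of objects across each elementary move; composability of quasi-isomorphisms (Definition of quasi-isomorphic objects) then chains these together to give $\g_1 \simeq_{\mathrm{qis}} \g_2$.

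First I would treat the $C^1$-close move. Suppose $\g_1'$ is $C^1$-close to $\g_1$ and bounds a finite family of bigons as in the setup preceding Proposition~\ref{C^1}. Set $f = f_{1,1'} = \sum_{i=0}^{n-1} p_i$. The discussion before Proposition~\ref{C^1} shows $m_1(f)=0$, and Proposition~\ref{C^1} shows $m_2(f,\cdot)\colon CF(\b,\g_1)\to CF(\b,\g_1')$ is an isomorphism of complexes for every $\b$ jointly transverse to the pair. For the other slot, $m_2(\cdot,f)\colon CF(\g_1',\b')\to CF(\g_1,\b')$, I would argue symmetrically, using the reversed family of bigons and the fact that the orientation-reversed intersection points $\{q_i\}$ assemble to a closed morphism $f_{1',1}$ with $m_2(f_{1',1},f_{1,1'})\simeq \mathrm{id}$ and $m_2(f_{1,1'},f_{1',1})\simeq\mathrm{id}$ at the chain level — this is exactly the computation in the lemma $m_2(f_{2,1'},f_{1,2})=f_{1,1'}$ specialised appropriately, together with the vanishing of higher products near $p$ proved in the lemma on empty moduli spaces. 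Hence $f$ is a quasi-isomorphism of objects in the sense of the definition.

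Next I would treat the elementary isotopy that introduces a bigon. Here $\g_1'$ differs from $\g_1$ by a finger move across an auxiliary curve $\b$, creating two new points $\{a,b\}$ with one of the curves and one bigon; this is precisely the configuration of Proposition~\ref{jointly_outside}, which supplies the required isomorphism in homology for both $m_2(\cdot,f_{1,2})$ and $m_2(f_{1,2},\cdot)$, using the associativity of $m_2$ (no higher polygons through the relevant point, again by the empty-moduli-space lemma) and the Gaussian-elimination identification of $CF(\g_1',\b)$ with an explicit deformation of $CF(\g_1,\b)$ as in Theorem~\ref{E:isotopy}. Thus each elementary move is realised by a quasi-isomorphism of objects, and concatenating them along the decomposition of the isotopy yields a chain of quasi-isomorphisms $\g_1 = A_0, A_1, \dots, A_m = \g_2$, proving that $\g_1$ and $\g_2$ are quasi-isomorphic objects.

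The main obstacle I anticipate is the bookkeeping required to make the decomposition of an arbitrary isotopy into the two elementary types genuinely fit the hypotheses of Propositions~\ref{C^1} and~\ref{jointly_outside}: one must ensure at each stage that the intermediate curves remain unobstructed and that the relevant triples stay admissible and transverse (perturbing within the $C^1$-close family if necessary so that a fixed test curve $\b$ is jointly transverse), and one must check that the auxiliary curve witnessing the quasi-isomorphism at one step can be chosen compatibly with the next step. Verifying the two chain-homotopy identities $m_2(f_{1,1'},f_{1',1})\simeq\mathrm{id}$ in both orders, with signs, is the other place where care is needed, but this is a local computation of the same flavour as the lemmas already proved and should present no essential difficulty.
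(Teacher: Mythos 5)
Your proposal is correct and follows essentially the same route as the paper: reduce the isotopy to a chain of elementary steps (curves meeting in two points / $C^1$-close curves), handle each step via Propositions~\ref{jointly_prop}, \ref{C^1} and \ref{jointly_outside} according to whether a test curve $\b$ is jointly transverse or jointly transverse outside a bigon (refining the chain relative to $\b$ otherwise), and compose the resulting quasi-isomorphisms using the vanishing of higher products. The bookkeeping concerns you flag at the end are exactly the points the paper's proof also has to (and does) address.
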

\begin{proof}
If $\g_1$ is isotopic to $\g_2$, then there exists a sequence of isotopic curves \\$\g_1=\g_1^1, \ldots, \g_1^k=\g_2$ such that $|\g_1^i\cap\g_1^{i+1}|=2$. This reduces us to the case  where \mbox{$|\g_1\cap\g_2|=2$.} 

Let $\b$ be a curve transverse to $\g_1 $ and $\g_2$. Then we may have the following three possibilities for $\b$. First possibility is that $\b$ is jointly transverse to  $\g_1 $ and $\g_2$ and we are done by Proposition \ref{jointly_prop}. Next we consider the situation when $\b$  is jointly transverse to  $\g_1 $ and $\g_2$ outside a bigon
 where it has two additional intersection points $\{a,b\}$ with one of the curves, by Proposition \ref{jointly_outside} we have the required quasi-isomorphism. 
Lastly we consider $\b$ which has neither of the above properties. In this case we can add a sequence of extra curves $\g_1=\g_1^1,\ldots,\g_1^k=\g_2$ such that for each $1\leq i\leq k$ we have $|\g_1^i\cap\g_1^{i+1}|=2$ and $\b$ is either jointly transverse to a pair of consecutive curves $\g_1^i $ and $\g_1^{i+1}$ or it will be jointly transverse to such a pair outside a bigon 
where it has two additional intersection points $\{a,b\}$ with one of the curves. This can be arranged because every isotopy between two curves on a surface can be realised by adding or removing bigons. In either situation, we have for each $1\leq i\leq k$ induced isomorphisms in homology, since each intersection point $f_{i,i+1}\in CF^0(\g_1^i,\g_1^{i+1})$ is a closed morphism for all $1\leq i\leq k$, thus $\g_1^i $ is quasi-isomorphic to $\g_1^{i+1}$.  In the absence of higher polygons, the quasi-isomorphism between $\g_1$ and $\g_2$ is simply obtained by composition of these intermediate quasi-isomorphisms between $\g_1^i $ and $\g_1^{i+1}$ for each $i$.

\end{proof}

\section{The Grothendieck Group $K_0(Fuk(\S))$}

In this section we construct a map $\Phi$ from the Grothendieck group of the Fukaya category $$K_0(Fuk(\S))\mathop{\to}\limits^{\Phi} H_1(S\S;\mathbb{Z})$$ where $S\S$ is the unit tangent bundle over $\S$ and the domain of $\Phi$ is the Grothendieck group of the derived Fukaya category of $\S$. This map  will take a curve in the Fukaya category to its lift in $S\S$ defined by the unit tangent vector.
Let $X$ be a non singular vector field on $\S \setminus z$.

 Using $X$, the tangent space of $\S\setminus z$ is trivialized and this gives a splitting
$$H_1(S\S;\mathbb{Z})\cong H_1(\S;\mathbb{Z})\oplus \mathbb{Z}/\chi(\S).$$
We start with a map $\phi$ defined on objects of the Fukaya category by $$Fuk(\S)\to H_1(\S,\mathbb{Z})\oplus \mathbb{Z}  / \chi(\S)$$ $$\g
\mathop{\to}\limits^{\phi}  ([\g], wd_X(\g))\ .$$ Note that the winding number of a curve $\gamma$ modulo $\chi(\S)$ does not change if $\g$ is
 isotoped over $z$, so that we may safely assume that $\gamma$ is generic with respect to $z$.
The map $\phi$ is easily seen to be surjective. The goal of this section is to show that this maps $\phi$ extends to the Grothendieck group which defines the expected map $\Phi$.

We use twisted complexes over $Fuk(\S)$ to derive this $A_{\i}$-precategory. Recall that the category of twisted complexes over a given $A_{\infty}$-precategory $\mc{A}$ is itself an $A_{\infty}$-precategory with twisted complexes over $\mc{A}$ as its objects and with morphisms between transversal pairs defined as follows.

We use the notation $\Nn$ for the set $\{0,1,2,\dots,n\}$. A twisted complex $\Gamma$ is a finite sequence of unobstructed immersed closed curves $(\g_k)_{k\in \Nn}$ 
 equipped with degree 1 morphisms $\Delta=(\Delta_{k,l})_{k<l}$ such that $\hat{m}_1(\Delta)=0.$  For the sake of convenience, we briefly recall the definition of the twisted differential $\hat{m}_1$ (for details see \cite{fukaya1993morse}, \cite{seidel2008fukaya}). Given twisted complexes $A=(A_i,\Delta^A)$ and $B=(B_j, \Delta^B) $ such that the sequence of objects $(A_{i_1},\ldots,A_{i_c},B_{j_1},\ldots, B_{j_d})$ is transverse, $\hat{m}_1$ is the differential induced by $\Delta^A$ and $\Delta^B$ on the graded algebra $\mc{M}or(A,B)= \mathop{\bigoplus}\limits_{i,j}\mc{M}or(A_i,B_j).$
\begin{defn}
Given an increasing sequence of integers $I$, let $|I|$ denote the number of elements in $I$,then
$$\hat{m}_1(F)=\mathop{\sum}\limits_{I,J} m_{|I|+|J|+1}(\Delta_J^B,f_{i,j},\Delta_I^A)$$ for all $F=(f_{i,k})\in \mc{M}or(A,B)$, where the sum is taken over all meaningful choices of sequences $I,J$.
\end{defn}

We extend  the map $\phi$ to twisted complexes in the obvious way
$$Tw(Fuk(\S))\to H_1(\S;\mathbb{Z})\oplus \mathbb{Z} / \chi(\S)$$
$$\G=(\g_k)_k\mapsto ([\G], wd_X(\G))=\mathop{\sum}\limits_k ([\g_k],wd_X(\g_k))$$ where $\G$ is an arbitrary twisted complex over $Fuk(\S)$.
\subsection{$C^1$-close twisted complexes}
Let $(\Gamma,\Delta)$ be a twisted complex where $$\G=(\g_k)_{k\in \Nn}\ ,
 \ \Delta=(\Delta_{k,l})_{k<l}\ .$$

 For a fixed index $k$, consider the finite set of intersection points between $\g_k$ and $\g_l$, for all $l$, equipped with the natural $\mathbb{Z}_2$-grading in $CF(\g_k,\g_l)$. These points subdivide the curves $\g_k$ into segments. Let us denote the set of these segments by $\mc{I}_k$. We choose  $\g_k'$, which is a  $C^1$-perturbation of $\g_k$ with the following properties:
\begin{enumerate}
  \item The curves $\g_k$ and $\g_k'$ intersect twice in each $I\in \mc{I}_k$. If we orient the segment $I$ according to the orientation of $\g_k$, then the degree 0 intersection point occurs before the intersection point of degree 1.
  \item The curve $\g_k'$ is parallel to $\g_k'$  outside a disc neigbourhood of each intersection point $c$, where it follows the local model in Figure \ref{local}.
  \end{enumerate}
\begin{figure}[ht]
\def\svgwidth{0.4\textwidth}
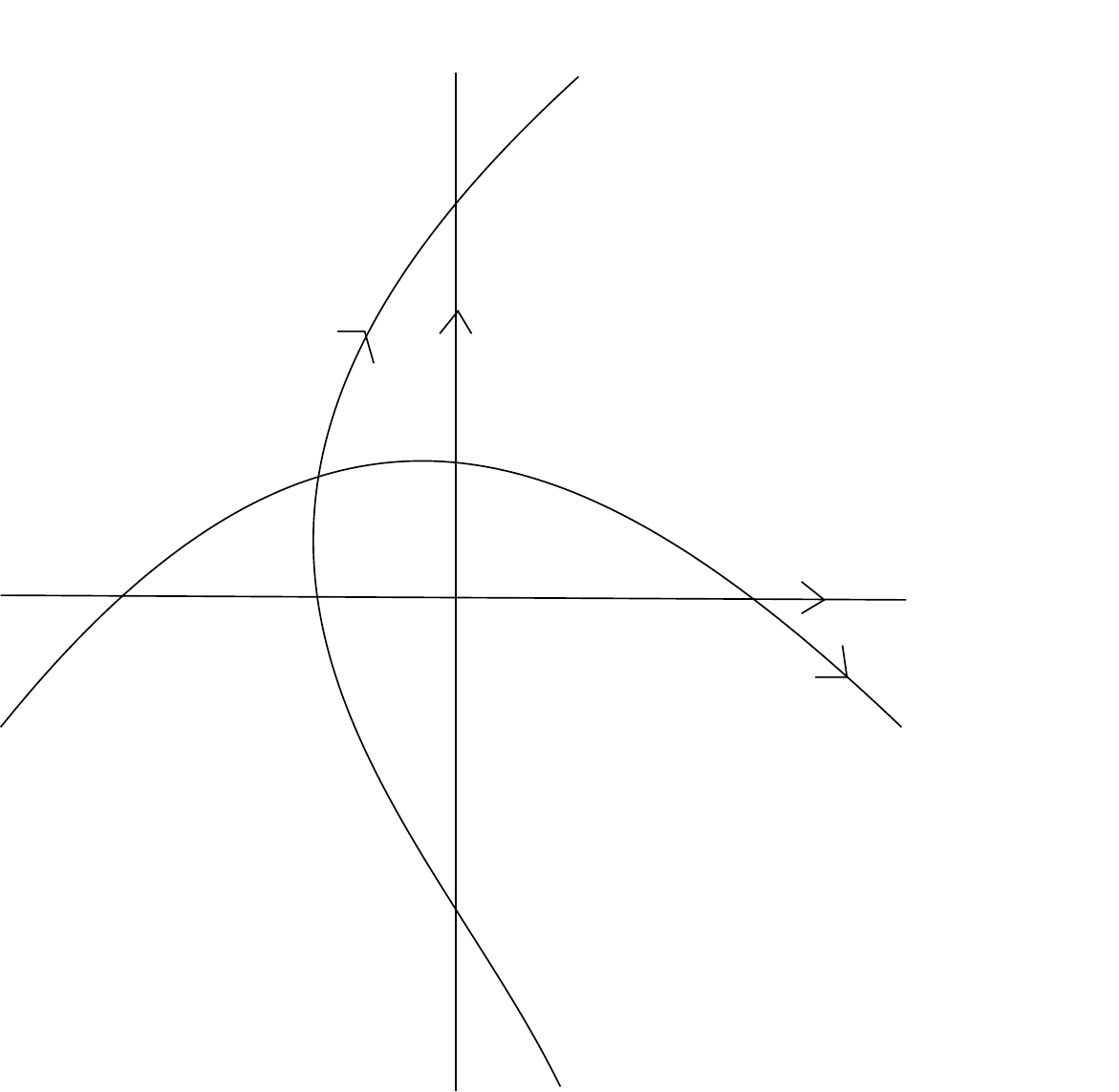
\caption{}
\label{local}
\end{figure}

In terms of the intersection points $c$ between $\g_k$ and $\g_l$, we decompose the morphisms $\Delta_{k,l}$ as $$\Delta_{k,l}=\mathop{\sum}\limits_{c\in\g_k\cap \g_l} \delta_{c}\, c.$$
By definition of $\Delta$, the coefficients in $\Delta_{k,l}$ vanish whenever the degree of $c$ in $CF^*(\g_k,\g_l)$ is different from 1.
 Each $c'$ in $\g_k'\cap \g_l'$ corresponds by $C^1-$closeness to a unique intersection point $c$ in $\g_k\cap \g_l$. Let $$\Delta_{k,l}'=\mathop{\sum}\limits_{c'\in\g_k'\cap \g_l'} \delta_{c}\,c'.$$
We denote by $\Gamma'$ the collection of curves $\g_k'$ and by $\Delta '$ the collection of morphisms $\Delta_{k,l}'.$

For each $k\in\Nn$, let $f_k$ be the quasi-isomorphism between $\g_k$ and $\g_k'$ similar to the explicit one constructed in Section 4: $$f_k=\mathop{\sum}\limits_{p\in \g_k'\cap \g_k,\deg{p}=0} p\in CF^0(\g_,\g'_k).$$ Taking a sum over all curves $\g_k$ we define: $$ F=\mathop{\oplus}\limits_{k}f_k\in \mathop{\bigoplus}\limits_{k}CF^0(\g_k,\g_k')\subset CF^0(\Gamma,\Gamma'). $$
\begin{lem}
a) $(\Gamma',\Delta')$ is a twisted complex.\\
b)  $F\in CF^0(\G,\G')$ is a quasi-isomorphism, in particular it is closed.\\
\end{lem}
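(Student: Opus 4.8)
The plan is to verify the two assertions separately, using the local model of Figure \ref{local} to reduce everything to the ``building block'' statements of Section \ref{quasi}. For part (a), I must show $\hat m_1(\Delta') = 0$, i.e. that for each pair $k<l$ the sum $\sum m_{|I|+|J|+1}(\Delta'_{J},\Delta'_{k,l},\Delta'_{I})$ over increasing index sequences $I$ (ending before $k$) and $J$ (starting after $l$) vanishes. The key observation is that, because $\Gamma'$ is a $C^1$-perturbation of $\Gamma$ supported near the intersection points, every rigid polygon contributing to a term $m_{t}(\dots)$ among the curves $\g'_{i_1},\dots,\g'_{k},\g'_{l},\dots$ is $C^1$-close to a corresponding polygon among $\g_{i_1},\dots,\g_k,\g_l,\dots$, and conversely: the correspondence $c'\leftrightarrow c$ between intersection points extends to a bijection of the relevant moduli spaces. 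Since $\delta_{c'} = \delta_c$ by definition of $\Delta'$, each term in $\hat m_1(\Delta')$ equals the corresponding term in $\hat m_1(\Delta)$, and the latter vanishes because $(\Gamma,\Delta)$ is a twisted complex. One must check that no \emph{new} polygons appear in $\Gamma'$ that had no counterpart in $\Gamma$; this is exactly the content of the ``no higher polygons through $p$'' lemma of Section \ref{quasi} applied locally at each bigon between $\g_k$ and $\g'_k$, together with the fact that the perturbation is $C^1$-small so that any such polygon would be trapped in a thin region and forced to degenerate.

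For part (b), I must show $F$ is a quasi-isomorphism of objects of $Tw(Fuk(\S))$, and in particular $\hat m_1(F)=0$. Closedness is the twisted-complex analogue of Proposition \ref{jointly_prop}(i): $\hat m_1(F) = \sum m_{|I|+|J|+1}(\Delta'_J, F, \Delta_I)$, and since $F=\oplus_k f_k$ with each $f_k = \sum_{p}p$ the degree-zero part of $CF^0(\g_k,\g'_k)$, the terms group, by the same cancellation as in $m_1(f_k) = -q+q = 0$, into contributions that cancel in pairs; the bookkeeping is that a polygon with one input $f_k$ either extends on each side to a polygon differing only by sliding the $\g_k$-side to the $\g'_k$-side, and the two resulting bigon-type contributions cancel. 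To see $F$ is a quasi-isomorphism, I invoke Proposition \ref{C^1} componentwise: for a twisted complex $B$ transverse to both $\Gamma$ and $\Gamma'$, the map $\hat m_2(F,\cdot)\colon \mathcal{M}or(B,\Gamma)\to \mathcal{M}or(B,\Gamma')$ is, up to the lower-order terms coming from $\Delta,\Delta'$, the direct sum over $k$ of the isomorphisms $m_2(f_k,\cdot)\colon CF(\b_j,\g_k)\to CF(\b_j,\g'_k)$ from Proposition \ref{C^1}; a filtration argument on the twisted differential (filter by the partial order on indices, so that on the associated graded $\hat m_2(F,\cdot)$ becomes exactly this direct sum) then shows $\hat m_2(F,\cdot)$ is a quasi-isomorphism of complexes, and symmetrically for $\hat m_2(\cdot, F)$.

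The main obstacle I anticipate is the \emph{sign and index bookkeeping} in part (a): one needs the bijection of moduli spaces to respect the sign $s(u)$ appearing in the definition of $m_k$, i.e. that a polygon $u$ among the $\g'$'s and its partner $\bar u$ among the $\g$'s satisfy $s(u) \equiv s(\bar u) \pmod 2$. Marked points contribute identically under $C^1$-closeness, but the orientation-disagreement contributions must be matched corner by corner, and one must be careful at the corners $c'$ lying near a bigon between $\g_k$ and $\g'_k$. This is essentially the same computation performed in the $k=3$ case of the $A_\infty$-lemma and in the proof of Theorem \ref{isotopic}, so I would handle it by citing those local model computations rather than redoing them. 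The filtration argument in part (b) is routine once one observes that $\Delta$ raises index, hence is strictly lower-order for the partial order filtration, so the associated graded of $(\mathcal{M}or(B,\Gamma),\hat m_1)$ is $\bigoplus_k (CF(\b,\g_k), m_1)$ with no twisting.
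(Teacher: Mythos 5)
Your proposal is correct in outline and, for part (a) and for the closedness of $F$, follows essentially the paper's route: a $C^1$-closeness bijection between the polygons contributing to $\hat{m}_1(\Delta)$ and to $\hat{m}_1(\Delta')$ gives (a), and closedness of $F$ reduces to $m_1(f_l)=0$ together with the local cancellation of the two \emph{triangles} computing $m_2(f_l,\Delta_{k,l})+m_2(\Delta_{k,l}',f_k)$ — not ``bigon-type'' terms — after one notes that all higher products with an $f_l$ input vanish by $C^1$-closeness; the paper states this vanishing explicitly, and you should too, since it is precisely what reduces $\hat{m}_1(F)$ to the $m_1$ and $m_2$ contributions. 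Where you genuinely diverge is the quasi-isomorphism part of (b). The paper first reduces to testing against a single unobstructed curve $\b$ (via Abouzaid's Lemma D.6), isotopes $\b$ to a curve $\b'$ jointly transverse to $\G$ and $\G'$ outside neighbourhoods of the intersection points, obtains an honest isomorphism of complexes as in Proposition \ref{jointly_prop}, and then transports the statement back to $\b$ through a chain of quasi-isomorphisms. You instead keep the test object as a twisted complex and run a filtration argument: $\hat{m}_2(F,\cdot)$ is triangular for the index filtration, with associated graded $\bigoplus_k m_2(f_k,\cdot)$, each summand an isomorphism by Proposition \ref{C^1}. Your route is the more standard homological-algebra one and avoids the external reduction lemma, but it is not yet complete as written: Proposition \ref{C^1} requires the test curve to be \emph{jointly} transverse to $\g_k$ and $\g_k'$, which an arbitrary transverse test object need not be, so you still need the paper's isotopy step (or Theorem \ref{isotopic}) before the diagonal entries of your filtration are known to be isomorphisms. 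With that step added, both arguments close, and yours has the mild advantage of working directly with twisted-complex test objects.
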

 \begin{proof}
Statement a) is \cite[Lemma 6.2]{abouzaid2008fukaya}. Closeness of $F$ is \cite[Lemma 6.3]{abouzaid2008fukaya}. The quasi-isomorphism is mentionned in 
\cite[Remark 6.4]{abouzaid2008fukaya}. Here we do not care about areas and the situation is slighty simpler. For the convenience of the reader we reproduce briefly the proofs here.
 
 Figure \ref{local} is an accurate local model at $c\in\g_k\cap \g_l$, showing bigons, triangles and squares contributing to $\hat{m}_1(\Delta)$. Note that this means for $CF^*(\cdot,\cdot)$  and all input intersection points, the source is the horizontal curve and target is the vertical curve whereas for the output point it is the opposite.
It can be seen geometrically that every polygon $u$ contributing to $\hat{m}_1(\Delta)$ can be perturbed to a polygon $u'$ by going along curves $\g_k'$ instead of $\g_k$. We have $\hat{m}_1(\Delta')=\hat{m}_1(\Delta)$, since $(\Gamma,\Delta)$ is a twisted complex $\hat{m}_1(\Delta')$ also vanishes which proves a)

For statement b), we first show that $F$ is closed morphism. 
The curves $\g_l$ and $\g_l'$ being $C^1$-close with respect to $\g_k$, the product map $$m_2(f_l, \cdot): CF(\g_k,\g_l)\to CF(\g_k,\g_l')$$ is an isomorphism of vector spaces hence invertible. Also the maps $$m_{s+1}(x_s',\ldots,x_l',f_l,x_{l-1},\ldots ,x_1)=0,\,\,\,\,\,\,\,\,\, \mbox{ for any }s>2,$$ since all $\g_i,\g_i'$ are $C^1$-close to each other relative to $\g_j,\g_j'$ for $i\neq j.$ So the only polygons contributing to $\hat{m}_1(f_l)$ are the ones contributing to $m_1$ and $m_2$. In order to show $\hat{m}_1(f_l)=0$ we need to show
$$\begin{array}{l}
                                                                  m_1(f_l)=  0 \\
                                                                  m_2(f_l,\Delta_{k,l})+m_2(\Delta_{k,l}',f_k) =  0
                                                                \end{array}
$$The first holds since $f_l$ is a quasi-isomorphism of curves $\g_l,\g_l'$, hence is closed. For each $p_l$, there exists at most one intersection point $c\in \g_k\cap \g_l$  adjacent to $p_l$, that contributes to the first term in the second condition. Given a degree zero intersection point $p_k$ between $\g_k$ and $\g_k'$ which is adjacent to $c$, we have $$m_2(p_l,c)+m_2(c',p_k) =  0 .$$ The computation reduces to the local model at $c$ shown in Figure \ref{local} and it can easily be checked that the triangles corresponding to each of these terms contribute opposite signs. 

For proving statement b), by \cite[Lemma D.6]{abouzaid2008fukaya} it is enough to show that the following induces isomorphisms in homology for any unbstructed curve $\beta$ transverse to $\G$, $\G'$:
$$\hat m_2(\ ,F): CF(\G,\beta)\to CF(\G',\beta)\ ,$$
$$\hat m_2(F,\ ): CF(\beta,\G)\to CF(\beta,\G')\ .$$
Any curve  $\beta$ can be isotoped to a curve $\beta'$
which is jointly transverse to $\G$, $\G'$ outside neighbourhoods of the intersection points $h\in \gamma_k\cap \gamma_l$.
 As in Proposition \ref{jointly_prop}, we get
  isomorphisms of complexes
 $$\hat m_2(\ ,F): CF(\G,\beta')\simeq CF(\G',\beta')\ ,$$
$$\hat m_2(F,\ ): CF(\beta',\G)\simeq CF(\beta',\G')\ .$$ 
 Using a sequence of quasi-isomorphisms between $\beta'$ and $\beta$ 
 we obtain the expected quasi-isomorphisms. 
\end{proof}
\subsection {Zero objects in the Grothendieck group} Here we identify the zero objects in the derived Fukaya category.
\begin{prop}If $(\G,\Delta)$ is a twisted complex quasi-isomorphic to the zero object then, $$([\G],wd_X(\G))=0\in H_1(\S,\mathbb{Z})\times\Z/\chi(\S)\ .$$
\end{prop}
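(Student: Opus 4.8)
The plan is to reduce to the two elementary operations which generate the equivalence relation ``quasi-isomorphic to zero'' on twisted complexes and check that $([\G],wd_X(\G))$ is invariant under each, with value $0$ on the honest zero object. Concretely, a twisted complex is quasi-isomorphic to zero if and only if it can be reached from the empty complex by a finite sequence of (a) adding or deleting a pair of consecutive summands $\g_k,\g_{k+1}$ with $\g_{k+1}$ a $C^1$-small pushoff of $\g_k$ connected by the canonical morphism $f_{k,k+1}\in CF^0(\g_k,\g_{k+1})$ (an ``acyclic'' two-term piece), and (b) changing the twisted differential $\Delta$ by a quasi-isomorphism that fixes the underlying sequence of curves; one also needs the $C^1$-perturbation move from the previous lemma, which replaces $(\G,\Delta)$ by $(\G',\Delta')$ without changing the multiset of isotopy classes of curves. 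I would first record that $\phi$, extended to twisted complexes as $\G=(\g_k)_k\mapsto\sum_k([\g_k],wd_X(\g_k))$, depends only on the isotopy classes of the $\g_k$: the class $[\g_k]\in H_1(\S;\Z)$ is an isotopy invariant, and $wd_X(\g_k)\in\Z/\chi(\S)$ is isotopy invariant by the remark already made in the text (isotoping over $z$ does not change the winding number modulo $\chi(\S)$). In particular the value is unchanged by move (b) and by the $C^1$-perturbation move, since none of these alters the underlying curves up to isotopy.

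The key computation is then that move (a) does not change $([\G],wd_X(\G))$. Here the two added summands are $\g_k$ and a $C^1$-close pushoff $\g_k'$; they are isotopic, so $[\g_k]+[\g_k']=2[\g_k]$ in $H_1(\S;\Z)$, which is \emph{not} zero in general — so the naive statement that each acyclic piece contributes $0$ is false, and this is the point that requires care. The resolution is orientation: in a two-term acyclic twisted complex the pushoff must be taken with the \emph{opposite} orientation (equivalently, the two curves appear with opposite signs in $K_0$, since $[\g_k'] = -[\g_k\text{ with reversed orientation... }]$), so that in $H_1$ the contributions cancel, $[\g_k]+[\g_k']=0$, and correspondingly $wd_X(\g_k)+wd_X(\g_k')=0$ because reversing orientation of a curve negates both its homology class and its winding number. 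Thus I would carefully fix the convention that the canonical acyclic piece built from $\g_k$ uses $\bar\g_k'$ (the reverse pushoff), check that with this convention $f_{k,k+1}$ is still the closed degree-$1$ morphism making $(\g_k,\bar\g_k',f)$ a twisted complex quasi-isomorphic to $0$, and conclude that adding or removing it leaves $([\G],wd_X(\G))$ unchanged. Since the empty complex maps to $0$, every twisted complex quasi-isomorphic to zero maps to $0$.

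The main obstacle I anticipate is precisely pinning down the orientation/sign bookkeeping in move (a): one must be sure that the elementary ``cancelling pair'' in the derived category of twisted complexes is genuinely $\g_k$ together with its \emph{reversed} $C^1$-pushoff (so the homology classes cancel rather than double), and that the invariance of $wd_X$ under orientation reversal is compatible with the splitting $H_1(S\S;\Z)\cong H_1(\S;\Z)\oplus\Z/\chi(\S)$ used to define $\phi$. A secondary, more routine point is to argue that every twisted complex quasi-isomorphic to zero is connected to the empty complex by a finite chain of moves (a), (b) and the $C^1$-perturbation move — this is the standard Gaussian-elimination / homological-perturbation description of the derived category of an $A_\infty$-(pre)category, and I would invoke the corresponding statement from Abouzaid's memoir (in the spirit of \cite[Lemma D.6]{abouzaid2008fukaya} and the surrounding material) rather than reprove it. Once these structural inputs are in place, the proof is just the two invariance checks above.
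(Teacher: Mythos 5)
Your reduction to elementary moves contains a fatal structural claim: that every twisted complex quasi-isomorphic to zero is reachable from the empty complex by cancelling pairs of isotopic (orientation-reversed) pushoffs, changes of $\Delta$, and $C^1$-perturbation. This is false, and the paper's own results show it. By Lemma \ref{cone}, $Cone(c)\colon \a\xrightarrow{c}\b$ is quasi-isomorphic to the single curve $\a\#_c\b$; totalizing this quasi-isomorphism produces an acyclic twisted complex whose underlying curves are $\a$, $\b$ and (a reverse of) $\a\#_c\b$ --- three pairwise non-isotopic curves that cannot be matched into cancelling isotopic pairs. More globally, the pair-of-pants relation $[\a_1]+[\a_2]+[\a_3]=T$ and the whole computation $K_0\cong H_1(S\S;\Z)$ depend on exactly such acyclic complexes built from non-isotopic curves; if your decomposition existed, $K_0$ would instead be the free group on isotopy classes of curves modulo orientation reversal. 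Your moves only change the multiset of isotopy classes of the underlying curves by adding or deleting an isotopic pair, so they generate far too small an equivalence relation. The point you flag as ``routine'' (and propose to outsource to the Gaussian-elimination folklore and \cite[Lemma D.6]{abouzaid2008fukaya}) is therefore not routine --- it is where the argument breaks. Gaussian elimination only cancels a pair when a \emph{component} of $\Delta$ is already an isomorphism, and an acyclic twisted complex over this precategory need not contain any such component.

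The paper's proof avoids any such decomposition. The vanishing of $[\G]\in H_1(\S;\Z)$ is immediate: $HF(\G,\g)=0$ for every simple closed curve $\g$, and the algebraic intersection $[\G]\cdot[\g]$ equals the Euler characteristic of $CF(\G,\g)$, so it vanishes for all $\g$. The winding number is the real content, and it is handled by an explicit geometric construction: since $\G\simeq 0$, the canonical closed morphism $F\in CF^0(\G,\G')$ to the $C^1$-perturbed complex is a boundary, $F=\hat m_1(H)$. The polygons contributing to $\hat m_1(H)$ are put in bijection (via the map $\widetilde\Psi$) with polygons contributing to $\hat m_1(\Delta)$, and these are assembled into a $2$-chain $U$ weighted by the coefficients of $H$ and $\Delta$. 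Lemma \ref{boundary_2_chain} shows $\partial U=\sum_k\g_k$, and Lemma \ref{q_isom_zero} computes $e(U)=0$ from $\hat m_1(\Delta)=0$ and $\hat m_1(H)=F$, which forces $wd_X(\G)\equiv 0 \bmod \chi(\S)$. If you want to salvage your strategy you would need to enlarge your move set to include cone/connect-sum replacements and then track $\phi$ through those --- but that is essentially Section 5.3--5.4 of the paper, not a shortcut around it.
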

\begin{proof}

The vanishing of $[\G]\in H_1(\S;\mathbb{Z})$ can be obtained simply by using the fact that for any simple closed curve $\g$ we have $HF(\G,\g)=0$, and hence
the intersection $[\G]\cdot [\g]$, which is equal to the Euler characteristic of $CF(\G,\g)$ vanishes for every $\g$. This implies $[\G]=0$. However we will need an explicit $2$-chain realising this null homology in order
to study the winding number. This construction will follow Abouzaid similar proof \cite{abouzaid2008fukaya} with simplification coming from the fact that we do not need areas. For our purpose we  use slightly different notation.

Let $(\Gamma,\Delta)$ be a twisted complex where $$\G=(\g_k)_{k\in \Nn}\ ,
 \ \Delta=(\Delta_{k,l})_{k<l}\ .$$
 We define $(\Gamma',\Delta')$ and $F\in CF(\G,\G')$ as in the preceding subsection. By definition of a twisted complex, we have
  $\hat{m}_1(\Delta)=0$.
  
For $K\subset \Nn$, $K=\{k_0<k_1<\dots<k_\nu\}$, we denote by $\mathcal{C}(K)$ the set of sequences of intersection points $(c_\nu,\dots,c_1)$ where $c_j\in \gamma_{k_{j-1}}\cap\g_{ k_j}$.
We will denote by $\nu(K)$ the length of a sequence $C\in \mathcal{C}(K)$,
that is, $1+\nu(K)$ is the cardinality of $K$.
 In this notation we have
$$\hat{m}_1(\Delta)=\mathop{\sum}\limits_{K\subset\Nn,C\in \mathcal{C}(K)}m_{\nu(K)}(C)\ .$$
So the polygons which contribute to  $\hat{m}_1(\Delta)$ are those in
$$\mathcal{M}(x,C)\ \text{for}\ C\in \mathcal{C}(K)\ ,\ K=\{k_0<k_1<\dots<k_\nu\}   \subset \Nn\ ,
\ x\in \g_{k_0}\cap \g_{k_\nu}\ .$$

The assumption $\Gamma$ is quasi-isomorphic to the zero object implies that the closed morphism $F\in CF^0(\G,\G')$ is a boundary, which means that there exists \mbox{$H\in CF^1(\Gamma,\Gamma')$} such that $\hat{m}_1(H)=F$. 

We denote by $\mathcal{C}'(K)$ the set of sequences of intersection points $(c_\nu,\dots,c_1)$ where $c_j'\in \gamma'_{k_{j-1}}\cap\g'_{k_j}$.
The set of polygons contributing to $\hat{m}_1(H)$ are those in 
$$\mc{M}(x,C',h,C)
\text{ where }
\left\{\begin{array}{l}
h\in \gamma_k\cap \gamma'_l\\
C\in \mc{C}(K)\ , \ K=\{k_0<k_1<\dots<k_{\nu}\}\\
C'\in \mc{C}(K')\ , \ K'=\{l_0<l_1<\dots<l_{\mu}\}\\
x\in \gamma_{k_0}\cap \gamma'_{l_{\mu}}\ 
\end{array}\right.
$$
In the case $x=p_j$, we have $k_0=l_{\mu}=j$ and we can write
$$L=K'\cup K=\{l_0<\dots<l_{\mu}=j=k_0<l_{\mu+1}=k_1<\dots<l_{\mu+\nu}=k_{\nu}\}\ .$$
In particular we have $l_0<k_{\nu}$.
Thus the set of polygons contributing to $\langle\hat{m}_1(H),p_j\rangle$ are those in 
$$\mc{M}(p_j,C',h,C)
\text{ where }
\left\{\begin{array}{l}
h\in \gamma_k\cap\gamma_l'\ ,\ k<l\\
C\in \mc{C}(K)\ , \ K=\{j=k_0<k_1<\dots<k_{\nu}=k\}\\
C'\in \mc{C}(K')\ , \ K'=\{l=l_0<l_1<\dots<l_{\mu}=j\}\\
\end{array}\right.
$$
There is a canonical bijection between the sets $\g_k\cap\g_l' $ and $\g_k\cap\g_l$. Let $\overline{h}\in \g_k\cap\g_l$ be the intersection point corresponding to the degree $1$ intersection point  $h\in \g_k\cap\g_l'\subset CF(\g_k,\g_l')$. Then  $\overline{h}$ has degree $0$ in $CF^*(\g_l,\g_k)$ (see Figure \ref{fig:key}).

\begin{figure}[ht]
\def\svgwidth{0.4\textwidth}
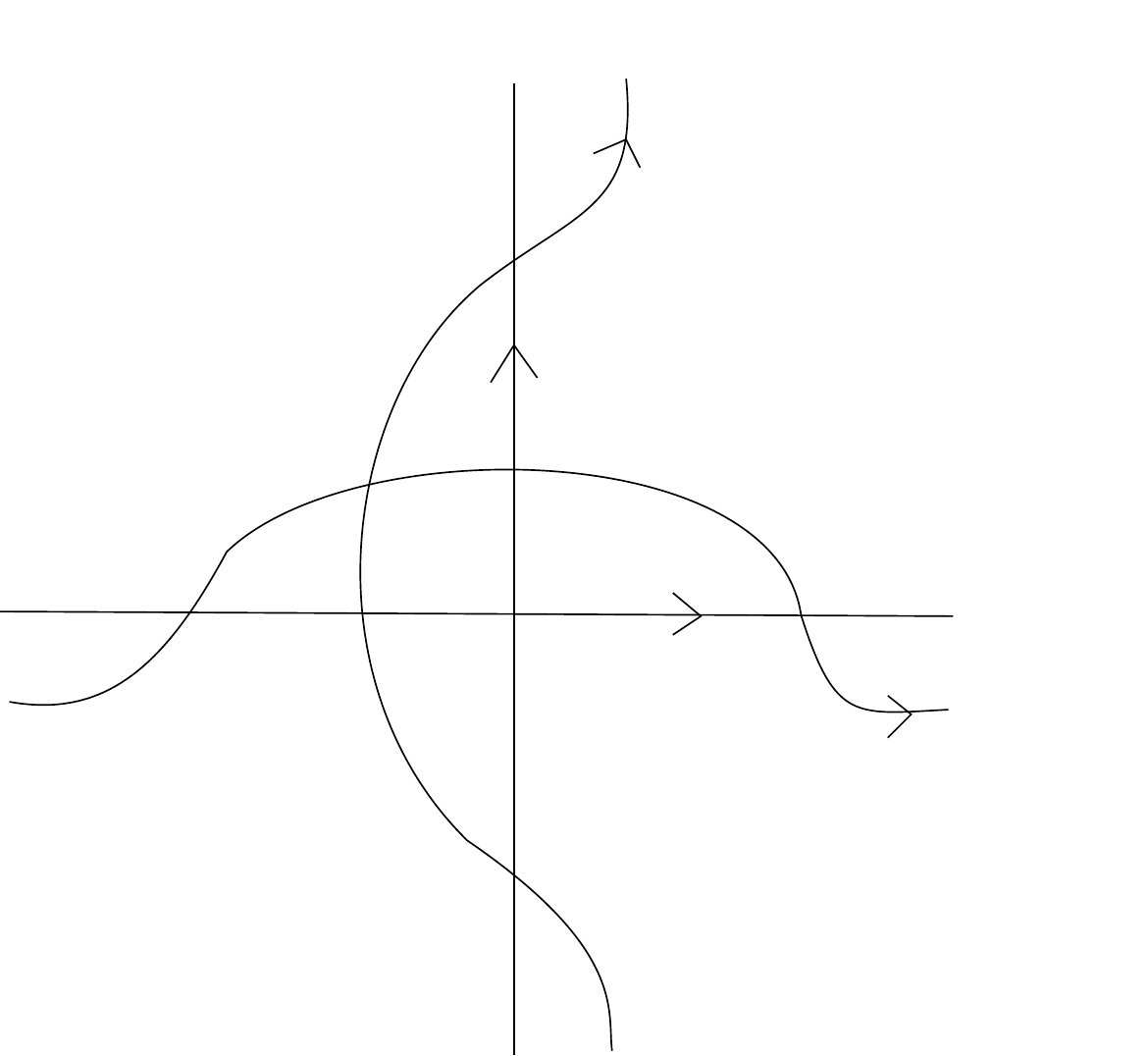
\caption{}
\label{fig:key}
\end{figure}

A key fact here is that  each polygon in the above family corresponds to a unique one in those described previously for contribution to $\hat{m_1}(\Delta)$. More formally, for each $h$, $j$, $K$, $K'$ as above, there exists a  map $\Psi$
\begin{equation*} \mc{M}(p_j,C',h,C)\mathop{\to}\limits^{ \Psi }\mc{M}(\overline h,D=C'\cup C)\end{equation*}
which is defined as follows.

Let $v$ be a polygon  in $\mc{M}(p_j,C',h,C)$. Ordered corners of $v$ are given by the sequence $(p_j,c'_{\mu},\dots ,c'_{1},h,c_{\nu},\dots,c_1)$, then $u=\Psi(v)$ is obtained
by replacing the two consecutive corners $p_j$, $c'_{\mu}$, by the neighbouring corner $c_{\mu}$ and each $c'_j$ to the near point $c_{j} $. Between each of the two shifted points, the boundary of the disc moves from the arc in $\G'$ to the corresponding neighbouring arc in $\Gamma$.
The result is a disc with  boundary on $\Gamma$ and ordered vertices 
$(\overline h, d_{\mu+\nu},\dots,d_1)$, where $d_{\mu+i}=c_i$ for $1\leq i\leq \nu$, and
for $1\leq s\leq \mu$, $d_s\in \g_{l_{s-1}}\cap \g_{l_s}$  is closed to $c'_s$.

Conversely, for $u\in \mc{M}(\overline h,D)$, the choice of a degree $0$ point $p_j\in \g_j\cap \g_j'\subset
CF(\g_j,\g'_j)$ gives a splitting $D=C\cup C'$,  and an element $z$ in the  inverse image  $u^{-1}(\overline p_j)$ determines a unique polygon $v\in \mc{M}(p_j,C',h,C)$. We get a bijection
$\widetilde{\Psi}$
\begin{equation*} \mc{M}(p_j,C',h,C)\mathop{\to}\limits^{ \widetilde\Psi }\mc{\widetilde M}(\overline h,D)=\{(u,z)\ ,u\in \mc{ M}(\overline h,D)\ ,\ z\in u^{-1}( p_j)\}
\end{equation*}

We want to construct from the polygons contributing to $\hat m_1(\Delta)$ a 2-chain $U$ whose boundary is $\mathop{\sum}\limits_{k}\g_k$. Each polygon $u$ will be weighted with an appropriate multiplicity.
 Let $\mathcal{U}$ be the union of all $\mathcal{M}(\overline h,D))$ for all $h$ and $D$ as above. Note that $u$ determines $h$ and $D$. Form the previous analysis of polygons, the degree $1$ element $H$ whose boundary is the canonical quasi-isomorphism $F$ can be written as,
 $$H=\mathop{\mathop{\sum}\limits_{k<l}}\limits_{h\in \g_k\cap\g'_l\,,\, \deg(h)=1}\ \lambda_h\, h\ .$$
 For $u\in \mc{U}$, let $M(u)=\lambda_{h}\,\delta_{D}\,(-1)^{s(u)} $.
 Here $\delta_D$ is the product of the coefficients $\delta_d$ for all points $d$ in $D$.
  The $2$-chain $U$ is defined by
  $$U=\mathop{\sum}\limits_{u\in\mc{U}}M(u)\,\, u.$$
 
  We will achieve the proof with the two following lemmas.
  \end{proof}

  For each degree $0$ intersection point $g\in CF^0(\g_j,\g_j')$
 $$\lan \hat{m}_1(\Delta),\overline g\ran=\mathop{\sum}\limits_{K\subset\Nn,D\in \mathcal{C}(K)} \ \ \mathop{\sum}\limits_{u\in \mathcal{M}(\overline g,D)} \delta_D\, (-1)^{s(u)} \ .$$
 
\begin{lem}\label{boundary_2_chain}
The boundary of the 2-chain $U$ is represented by $\mathop{\sum}\limits_k \g_k.$
\end{lem}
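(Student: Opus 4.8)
The plan is to compute $\partial U$ segment by segment and show that the coefficient in front of each segment equals exactly $1$, so that $\partial U = \sum_k \g_k$. Recall that $U = \sum_{u \in \mathcal{U}} M(u)\, u$ is supported on polygons $u \in \mathcal{M}(\overline{h}, D)$ with boundary on $\Gamma$, and $M(u) = \lambda_h\,\delta_D\,(-1)^{s(u)}$. The boundary of each such polygon runs along arcs of the curves $\g_k$; the contribution of $u$ to the coefficient of a given oriented segment $\sigma \subset \g_k$ of the complement subdivision is $\pm M(u)$ depending on whether $\partial u$ traverses $\sigma$ with or against the orientation of $\g_k$. So the claim reduces to: for each segment $\sigma$ on each $\g_k$, the signed sum $\sum_u \pm M(u)$ over polygons $u$ whose boundary covers $\sigma$ telescopes to $1$.

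First I would fix attention on a segment $\sigma$ lying on $\g_k$ between two consecutive intersection points, and organize the polygons $u \in \mathcal{U}$ covering $\sigma$ according to which corner of $u$ lies at an endpoint of $\sigma$ and which curve the adjacent boundary arc lies on. The key input is the bijection $\widetilde\Psi$ (equivalently the map $\Psi$) from the preceding discussion, which identifies polygons in $\mathcal{M}(p_j, C', h, C)$ — the ones contributing to $\langle \hat m_1(H), p_j\rangle$ — with pairs $(u, z)$, $u \in \mathcal{M}(\overline h, D)$, $z \in u^{-1}(p_j)$. Since $\hat m_1(H) = F = \bigoplus_k f_k$ and $f_k = \sum_{\deg p = 0} p$, for each degree-zero point $p_j \in \g_j \cap \g_j'$ the coefficient $\langle \hat m_1(H), p_j\rangle$ equals $1$. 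Unwinding this identity through $\widetilde\Psi$ and the weights $\lambda_h$, $\delta_D$, $(-1)^{s(u)}$ gives a relation among the $M(u)$'s that is precisely the telescoping statement needed for the segments adjacent to $p_j$. I would then propagate this along $\g_k$: moving from one segment of $\g_k$ to the next across an intersection point $c \in \g_k \cap \g_l$, the relation $\hat m_1(\Delta) = 0$ (the twisted complex condition) gives the matching cancellation, so the coefficient $1$ is constant along all of $\g_k$.

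Concretely the steps are: (1) express $\partial U = \sum_k \sum_\sigma n_{k,\sigma}\,\sigma$ and note $n_{k,\sigma} = \sum_{u} \varepsilon(u,\sigma) M(u)$ with $\varepsilon(u,\sigma) = \pm 1$; (2) for a segment $\sigma$ one of whose endpoints is a degree-zero point $p_j \in \g_j \cap \g_j'$, use $\widetilde\Psi$ together with $\langle \hat m_1(H), p_j \rangle = 1$ and the sign bookkeeping relating $s(v)$ for $v \in \mathcal M(p_j, C', h, C)$ to $s(u)$ for $u = \Psi(v)$ to show $n_{k,\sigma} = 1$; (3) for an arbitrary segment, slide to a neighbouring one across an intersection point $c$ and invoke $\langle \hat m_1(\Delta), \cdot\rangle = 0$ to see $n_{k,\sigma}$ does not change; (4) conclude $n_{k,\sigma} = 1$ for every $k$ and $\sigma$, hence $\partial U = \sum_k \g_k$. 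The main obstacle is step (2)–(3): getting the signs to line up. One must check that the contribution $s(u)$ of a polygon (marked points plus orientation-disagreement count) transforms correctly under $\Psi$ when two boundary arcs are pushed from $\Gamma'$ to $\Gamma$ — equivalently across the local model of Figure \ref{local} — and that the orientation sign $\varepsilon(u,\sigma)$ with which $\partial u$ covers $\sigma$ combines with $(-1)^{s(u)}$ to produce exactly the telescoping cancellation rather than an accidental extra sign. This is the same kind of careful sign chase as in Abouzaid's proof \cite{abouzaid2008fukaya}, simplified here because no area weights intervene.
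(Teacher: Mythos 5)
Your step (2) is exactly the paper's argument: the identity $\sum_u M(u)\,n_u(p_j)=\langle\hat m_1(H),p_j\rangle=1$, obtained from the bijection $\widetilde\Psi$ together with the sign relation $s(v)=s(u)\deg_z(u)$, is the entire content of the proof. What you overlook is that your step (3) is unnecessary: by construction of the perturbation $\g_k'$ (property (1) in the definition of $C^1$-close twisted complexes), \emph{every} segment of $\g_k$ delimited by intersection points with the other curves of $\G$ contains a degree-zero point of $\g_k\cap\g_k'$, and $n_u$ is locally constant on each such segment. So the computation at the points $p_j$ already determines the boundary coefficient on every segment, and there is nothing to propagate.

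Moreover, step (3) as you state it would not go through. The jump of the boundary multiplicity of $U$ across a point $c\in\g_k\cap\g_l$ is a signed count of corners at $c$ of polygons $u\in\mc U$, and these polygons carry the weights $\lambda_h$ coming from $H$ in addition to the weights $\delta_D$ coming from $\Delta$; the relevant sum is not of the form $\sum\delta_C(-1)^{s(u)}$ computed by $\hat m_1(\Delta)$. The identity that kills this jump is the vanishing of $\langle\hat m_1(H),g\rangle$ at off-diagonal intersection points $g\in\g_k\cap\g_l$, which holds because $\hat m_1(H)=F$ is supported on $\oplus_j CF(\g_j,\g_j')$ --- this is precisely the computation the paper performs in the subsequent lemma on the winding number, not a consequence of $\hat m_1(\Delta)=0$. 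Since the propagation step is redundant once the placement of the $p_j$ is noted, this misattribution does not sink the proof, but if you keep the propagation formulation you must replace the cited identity.
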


\begin{proof}
If $u$ is a polygon with a boundary edge on $\g$, and $p$ is a point on $\g$, then we have a local degree  for the boundary of $u$  counting with sign
the points in the inverse image $u^{-1}(p)$, which we denote by $n_u(p)$.

Let $p\in \g_j$, then the required statement is equivalent to showing that $$\mathop{\sum}\limits_{u\in \mc{U}}M(u)n_u(p)=1.$$ The value of $n_u(p)$ is locally constant on each segment delimited by intersection points with other curves in $\G$, and this segment contains
a degree zero intersection point $p_j\in CF(\g_j,\g'_j)$.
It is enough to show that the coefficient of $p_j$ in the expression for $\hat{m}_1(H)$ equals $\mathop{\sum}\limits_{u\in \mc{U}}M(u)n_u(p_j)$. 
We have $n_u(p_j)=\sum_{z\in u^{-1}(p_j)}\deg_z(u)$ where the local degree is $\pm 1$ depending if $\partial u$ is locally oriented or not.
The bijection $\widetilde{\Psi}$ associates to $(u,z)$ a unique polygon $v\in\mc(p_j,C',h,C)$.
The polygon $v$  contributes a sign $s(v)$ to $\hat{m}_1(H)$. From the definition of signs in higher products, we have $s(v)=s(u)\deg_z(u)$. We can now compute
\begin{eqnarray*}
\sum_uM(u)n_u(p_j)&=&\sum_{u,z\in p^{-1}(p_j)}M(u)\deg_z(u)\\
&=&\sum _{u,z\in p^{-1}(p_j)}\lambda_h\delta_D (-1)^{s(u)}\deg_z(u)\\
&=&\sum_v\delta_{C'} \lambda_h\delta_C (-1)^{s(v)}\\
&=&\langle \hat m_1(H),p_j\rangle=1\\
\end{eqnarray*}
This completes the proof.
\end{proof}

In the following proof we will compute the winding number \cite{chillingworth1972windingI}. Note that our winding number $wd_X$ counts the rotation of the tangent vector relative to $X$ rather than the opposite.
It will be convenient to use the Euler measure. Here we follow \cite[Section 4]{lipschitz}.
The Euler measure of a surface $S$ with corners on the boundary is $\frac{1}{2\pi}$ times the
integral over $S$ of the curvature of a metric on $S$ for which the boundary  is geodesic and the corners  are right angles. We use the notation $e(S)$. An important property
is additivity for gluing along segments of the boundary. In the case of smooth boundary it coincides with the Euler characteristic. For a $n$-gon $D$ with convex corners, the value is $e(D)=1-\frac{n}{4}$.

\begin{lem}\label{q_isom_zero}
If $\Gamma$ is quasi-isomorphic to the zero object, then the winding number of $\Gamma$ vanishes, that is, $$wd_X(\Gamma)=0.$$
\end{lem}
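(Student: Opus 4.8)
The plan is to run a Gauss--Bonnet computation on the $2$-chain $U=\sum_{u\in\mc U}M(u)\,u$ constructed in Lemma \ref{boundary_2_chain}. Since $\partial U=\sum_k\g_k$ and $wd_X$ is additive over the components of a $1$-cycle,
$$wd_X(\G)=\sum_k wd_X(\g_k)=wd_X(\partial U),$$
and since isotoping a curve over $z$ does not change $wd_X$ modulo $\chi(\S)$ we may first push all the $\g_k$ off $z$, so that $z$ lies in no polygon of $U$. Equip $\S$ with a metric for which every $\g_k$ is a closed geodesic and every crossing is a right angle --- the normalisation built into the Euler measure. Each $u\in\mc M(\overline h,D)$ is then a $(1+|D|)$-gon with geodesic sides and right-angle convex corners, and $u^{*}X$ is a nowhere-zero vector field on the disc, so the total rotation of $\partial u$ relative to $X$, corner jumps included, equals $\chi(D)=1$. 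Assembling the $\partial u$ with the weights $M(u)$, every segment of every $\g_k$ is covered with total multiplicity $1$, so the smooth arcs contribute exactly $wd_X(\partial U)$, while the convex corners contribute $\tfrac14\sum_u M(u)\,n_u$ with $n_u=1+|D_u|$ the number of corners of $u$. Hence
$$wd_X(\G)=\sum_u M(u)-\tfrac14\sum_u M(u)\,n_u=\sum_u M(u)\,e(u)=e(U).$$

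The next step is to check $\sum_u M(u)=0$. As $u$ determines its output corner $\overline h$ and the sequence $D$, and $M(u)=\lambda_h\,\d_D\,(-1)^{s(u)}$, grouping the polygons of $\mc U$ by $\overline h$ and using the identity for $\lan\hat m_1(\Delta),\overline g\ran$ displayed just before the lemma, each group contributes $\lambda_h$ times a coefficient of $\hat m_1(\Delta)$; since $(\G,\Delta)$ is a twisted complex these vanish, giving $\sum_u M(u)=0$. Therefore
$$wd_X(\G)=e(U)=-\tfrac14\sum_u M(u)\,|D_u|\ ,$$
that is, $wd_X(\G)$ equals, up to sign, the total turning accumulated at the convex corners of the polygons forming $U$, all of which lie at crossing points $\g_k\cap\g_l$. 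It remains to see that this integer is a multiple of $\chi(\S)$.

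This last point is the crux, and is where the remaining twisted-complex data is used. The plan is to regroup the corner turnings crossing by crossing: at each $p\in\g_k\cap\g_l$ the four local multiplicities of $U$ differ by $\pm1$ across the two branches of $\partial U$, so their sum is divisible by $4$, which together with the coherent sign rule of the higher products shows each local contribution is an integer; the global total is then tied to $0$ modulo $\chi(\S)$ by combining $\hat m_1(\Delta)=0$ with the full content of $\hat m_1(H)=F$ (Lemma \ref{boundary_2_chain} being only its degree-zero-output shadow), in the spirit of the index/Euler-measure identities of \cite[Section 4]{lipschitz}. I expect this final bit of bookkeeping --- upgrading ``the residual corner term is an integer'' to ``it is divisible by $\chi(\S)$'' --- to be the main obstacle; everything leading to it is a formal consequence of Lemma \ref{boundary_2_chain} and Gauss--Bonnet.
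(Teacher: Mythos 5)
Your reduction of $wd_X(\G)$ to the Euler measure $e(U)=\sum_u M(u)\bigl(1-\tfrac14 n_u\bigr)$, and your proof that the constant term $\sum_u M(u)$ vanishes (grouping over the output corners $\overline h$ and invoking $\hat m_1(\Delta)=0$), both match the paper's argument. But the proof is not finished: you stop at $wd_X(\G)=-\tfrac14\sum_u M(u)\,|D_u|$ and openly defer the step that actually makes the lemma true, namely why this residual corner term vanishes. This is exactly where the hypothesis that $\G$ is quasi-isomorphic to zero must be used a second time, and the route you sketch (local multiplicities differing by $\pm1$, divisibility by $4$, then somehow by $\chi(\S)$) is not the right one and would not obviously close. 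The correct completion is to regroup the sum crossing by crossing: for each $g\in\g_k\cap\g_l$ with $k<l$, the polygons $u\in\mc{U}$ having $g$ as a corner correspond bijectively (by the same shifting construction $\widetilde{\Psi}$ used in Lemma \ref{boundary_2_chain}, now applied at the corner $g$ rather than at an interior preimage of $p_j$) to the polygons contributing to the coefficient of the nearby point of $\g_k\cap\g_l'$ in $\hat m_1(H)$; hence $\sum_{u\,:\,g\in D_u}M(u)=\langle\hat m_1(H),g\rangle$. Since $\hat m_1(H)=F$ and $F$ lies in $\bigoplus_j CF(\g_j,\g_j')$, it has no component at any crossing between distinct curves, so each of these sums is exactly zero. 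This gives $e(U)=0$ on the nose, not merely modulo $\chi(\S)$, and the lemma follows.

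A smaller point: you cannot arrange that ``$z$ lies in no polygon of $U$'' by isotoping the curves; $z$ is a point of $\S$ and will in general sit inside regions of nonzero multiplicity. What saves the Gauss--Bonnet count is that each passage of a polygon over the singular point of $X$ changes the rotation number by a multiple of $\chi(\S)$, and $wd_X(\G)$ is only defined in $\Z/\chi(\S)$ to begin with; this is how the paper states the conclusion.
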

\begin{proof}
We compute the Euler measure for the $2$-chain $U$:
\begin{eqnarray*}
e(U)&=&\sum_{u\in \mc{U}}M(u)e(u)\\
&=&\sum_{u\in \mc{U}}\ \lambda_h\delta_D(-1)^{s(u)}(1-\frac{1}{4}|D|)
\end{eqnarray*}
The vanishing of $\hat m_1(\Delta)$ implies
$$\sum_{u\in \mc{U}}\lambda_h\delta_D(-1)^{s(u)}=0\ .$$
We now have
$$e(U)=\frac{-1}{4}\sum_{k<l,g\in \g_k\cap\g_l}\ 
\sum_{u | g\in D} \lambda_h\delta_D(-1)^{s(u)}\ .$$
After describing the  polygons contributing to $\langle \hat m_1(H),g\rangle$, we get,
with computation similar to the previous lemma,
$$\sum_{u | g\in D} \lambda_h\delta_D(-1)^{s(u)}=\langle \hat m_1(H),g\rangle=0\ $$
The vanishing comes from the equality $\hat m_1(H)=F$ since $F\in \oplus_j CF(\gamma_j,\gamma_j')$.
We have obtained that the Euler number of $U$ vanishes. Since the boundary of $U$ has no corners
this implies that the winding number vanishes up to a multiple of the Euler characteristic corresponding to the polygons going over the singular point of the vector field $X$.
\end{proof}

\subsection{Cones and Connect sums}\label{cones}
In this section, we discuss the connection between twisted complexes and connect sums of curves on $\Sigma$, to be used later. We begin with the following definition.
\begin{defn}
If $\a$ and $\b$ are curves which intersect at a point $c$, then $\a\#_c\b$ is the immersed oriented curve obtained by resolving $c$.
\end{defn}
Note that this curve is only well-defined up to isotopy or equivalently, up to quasi-isomorphism in the Fukaya category.
\begin{figure}[ht]
\def\svgwidth{0.45\textwidth}
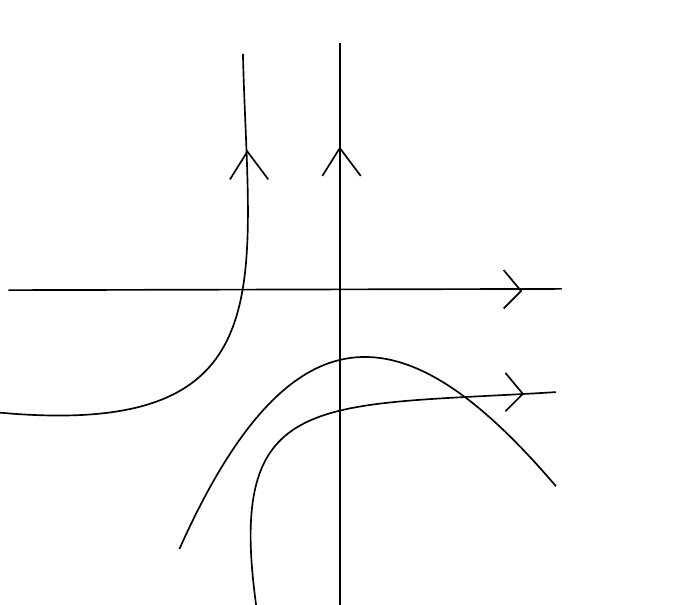

\caption{Connect sum of curves $\a$ and $\b$}
\label{connectsum}

\end{figure}

Since the category $Tw(Fuk(\S))$ is also an $A_{\infty}$ pre-category, for use in the subsequent paragraphs we state the formula for the product map $\hat{m}_2$: if $(A,\Delta^A),(B,\Delta^B)$ and $(C,\Delta^C)$ is a transverse triple of twisted complexes then $$\hat{m}_2(G,F)=\mathop{\sum}\limits_{K,J,I}m_{|K|+|J|+|I|+2}(\Delta_K^C,g_{l,k},\Delta_J^B,f_{i,j},\Delta_I^A)$$
for morphisms $F=(f_{i,j})\in \mc{M}or(A,B)$ and $G=(g_{l,k})\mc{M}or(B,C)$.
\begin{lem}\label{cone}
Let $\a$ and $\b$ be oriented immersed unobstructed curves that intersect transversally and minimally, and let the point $c$ be a degree 1 intersection between $\a$ and $\b$. Then $\a\#_c\b$ is quasi-isomorphic in $Tw(Fuk(\S))$ to the twisted complex $$Cone(c): \a\mathop{\longrightarrow}\limits^{c}\b.$$
\end{lem}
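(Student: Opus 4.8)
The plan is to produce an explicit quasi-isomorphism between the one-object twisted complex $\a\#_c\b$ (viewed as an object of $Tw(Fuk(\S))$ after a small isotopy making everything transverse) and the two-term twisted complex $Cone(c):\a\xrightarrow{c}\b$, and then invoke the characterization of quasi-isomorphisms of twisted complexes via the maps $\hat m_2(\,\cdot\,,F)$ and $\hat m_2(F,\,\cdot\,)$ (the analogue of \cite[Lemma D.6]{abouzaid2008fukaya} used already in the $C^1$-close twisted complex lemma). First I would set up the local picture near $c$: since $\a,\b$ intersect minimally and transversally at $c$, and $c$ has degree $1$, the resolution $\a\#_c\b$ agrees with $\a\cup\b$ outside a small disc $D_c$ around $c$, and inside $D_c$ it replaces the crossing by the oriented smoothing compatible with the orientations of $\a$ and $\b$ (Figure \ref{connectsum}). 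I would perturb $\a\#_c\b$ slightly so that it is transverse to both $\a$ and $\b$; near $c$ this produces exactly two intersection points of $\a\#_c\b$ with $\a$ and two with $\b$ — call the relevant degree-$0$ points $a\in\a\cap(\a\#_c\b)$ and $b\in(\a\#_c\b)\cap\b$ as in the figure — together with one more intersection point on each of $\a,\b$ coming from the rounding, and away from $D_c$ the intersections of $\a\#_c\b$ with any test curve $\G$ are in canonical bijection with those of $\a\cup\b$ with $\G$.

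Next I would write down the candidate morphisms. Let $F=(f_\a,f_\b)\in \mc{M}or\big(\a\#_c\b,\,Cone(c)\big)$ with $f_\a\in CF^0(\a\#_c\b,\a)$ represented by $a$ and $f_\b\in CF^0(\a\#_c\b,\b)$ represented by $b$; and let $G=(g_\a,g_\b)$ in the opposite direction with $g_\a\in CF^0(\a,\a\#_c\b)$ and $g_\b\in CF^0(\b,\a\#_c\b)$ the corresponding near points. I would then check $\hat m_1(F)=0$: this unwinds to $m_1(f_\a)=0$, $m_1(f_\b)=0$, and the compatibility $m_2(c,f_\a)\pm m_1(f_\b)=0$ together with the fact that the only polygon with boundary on $\a\#_c\b$, $\a$, $\b$ through $a$, $c$, $b$ is the small triangle inside $D_c$; this is a purely local sign computation entirely analogous to the one in the proof of part (b) of the $C^1$-close twisted complex lemma (the triangles at $c$ in Figure \ref{local}). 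The same local analysis shows $\hat m_1(G)=0$. Then I would compute $\hat m_2(F,G)$ and $\hat m_2(G,F)$: away from $D_c$ both reduce to products of canonically-paired points and contribute the identity (up to the $m_2$ composition of $C^1$-close quasi-isomorphisms, exactly as in Proposition \ref{jointly_prop} and Proposition \ref{C^1}), while inside $D_c$ the extra bigon between the rounding points accounts for the remaining contribution, so that $\hat m_2(G,F)$ and $\hat m_2(F,G)$ are each the canonical self-quasi-isomorphism of the respective object (the analogue of $f_{1,1'}$). By the twisted-complex analogue of \cite[Lemma D.6]{abouzaid2008fukaya}, for any unobstructed curve $\b'$ transverse to all curves involved the maps $\hat m_2(\,\cdot\,,F)\colon CF(Cone(c),\b')\to CF(\a\#_c\b,\b')$ and $\hat m_2(F,\,\cdot\,)\colon CF(\b',\a\#_c\b)\to CF(\b',Cone(c))$ then induce isomorphisms in homology — for test curves $\b'$ jointly transverse outside the relevant bigons this is an isomorphism of complexes, and the general case follows by interposing a sequence of isotopies as in Theorem \ref{isotopic} and Proposition \ref{jointly_outside}. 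Hence $F$ is a quasi-isomorphism and $\a\#_c\b\simeq Cone(c)$ in $Tw(Fuk(\S))$.

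The main obstacle I expect is keeping the signs and the $A_\infty$/twisted-complex bookkeeping straight in the identity $\hat m_1(F)=0$ and in evaluating $\hat m_2(G,F)$, $\hat m_2(F,G)$: one has to be careful that the marked point conventions (the contribution $s(u)$ counting marked points on $\partial u$) and the orientation-disagreement contributions cancel correctly at the triangle and bigon inside $D_c$, and that the degrees of the auxiliary rounding points are as claimed so that $Cone(c)$ really is a twisted complex (i.e. $c$ has degree $1$, which is exactly the hypothesis). A secondary point requiring care is verifying that no higher polygons ($\hat m_{\ge 3}$ terms) contribute to $\hat m_1(F)$ or to $\hat m_2(G,F)$; this follows, as in the lemma on $C^1$-close curves in Section \ref{quasi}, because $\a\#_c\b$ is $C^1$-close to $\a\cup\b$ away from $D_c$ and the only non-trivial local behavior is confined to $D_c$, so the "no non-trivial higher products" argument applies verbatim and one reduces cleanly to $m_1$ and $m_2$.
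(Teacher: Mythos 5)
Your proposal follows essentially the same route as the paper's proof: the near intersection points $a$ and $b$ created by the resolution give explicit closed morphisms (closedness via minimal intersection for the $m_1$-terms and a cancelling pair of triangles at $c$ for $m_2(c,a)$), higher products are excluded because $\a\#_c\b$ is $C^1$-close to $\a\cup\b$ away from the resolution disc, and the quasi-isomorphism is concluded by showing the composition $\hat m_2(b,\hat m_2(a,\cdot))$ acts as an isomorphism on $CF^*(\g,\a\#_c\b)$ for test curves $\g$ (which the paper isolates as the subsequent lemma, via $m_3(c,a,x)=\pm x'$ followed by $m_2(b,x')=\pm x$). The only discrepancy, which is harmless, is that you package $b$ as a degree-$0$ component $f_\b\in CF^0(\a\#_c\b,\b)$ of a two-component morphism $F$, whereas $b$ naturally has degree $0$ in the opposite direction $CF^0(Cone(c),\a\#_c\b)$; the paper instead takes the morphism into the cone to be the single point $a$ (with vanishing $\b$-component) and uses $b$ only for the reverse morphism.
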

\begin{proof}
Let $a$ and $b$ be intersections of $\a\#_c\b$ with curves $\a\&\b$ respectively. As in Figure \ref{connectsum}, $a\in CF^0(\a\#_c\b,\a)$ this can be considered as $a\in CF^0(\a\#_c\b,Cone(c))$. Showing that $a$ is a closed element with respect to $\hat{m}_1$, is equivalent to showing     $$\begin{array}{c}
                                                                                                    m_1(a)=0 \\
                                                                                                    m_2(c,a)=0.
                                                                                                  \end{array}
$$
The first equality follows from the fact that $\a\&\b$ intersect minimally, so there can be no bigons with boundary on $\a\#_c\b$ and $\a$.
To show $m_2(c,a)=0$, we observe that there is a signed contribution of two triangles $u$ and $v$ which adds up to zero.
 The marked points have a contribution of 2 to $s(v)$ and 1 to $s(u)$, or a contribution of 3 to $s(v)$ and none to $s(u)$ (see Figure \ref{fig:markedpoints}). The remaining two cases are similar with $u\& v $ swapped. Either way one has $s(u)\equiv s(v)+1 \,(mod\,2)$ and the contributions cancel. Similarly, one may prove that $b$ is a closed element in $CF^0(Cone(c),\a\#_c\b)$. To show that $Cone(c)$ is quasi-isomorphic to $\a\#_c\b$, by Proposition 3.1, we need to show that $m_2(a,\cdot)$ is an isomorphism. Its injectivity follows from the following composition being an isomorphism:
$$\hat{m}_2(b,\hat{m}_2(a,\cdot)): HF^*(\g,\a\#_c\b)\to HF^*(\g,\a\#_c\b),$$ which is a consequence of the next lemma.
Based on similar arguments one can prove surjectivity of $m_2(a,\cdot)$ and even the isomorphism $m_2(\cdot,a)$.
\end{proof}

\begin{figure}
\begin{picture}(0,110)
 \put(-135,0){\def\svgwidth{0.35\textwidth}
 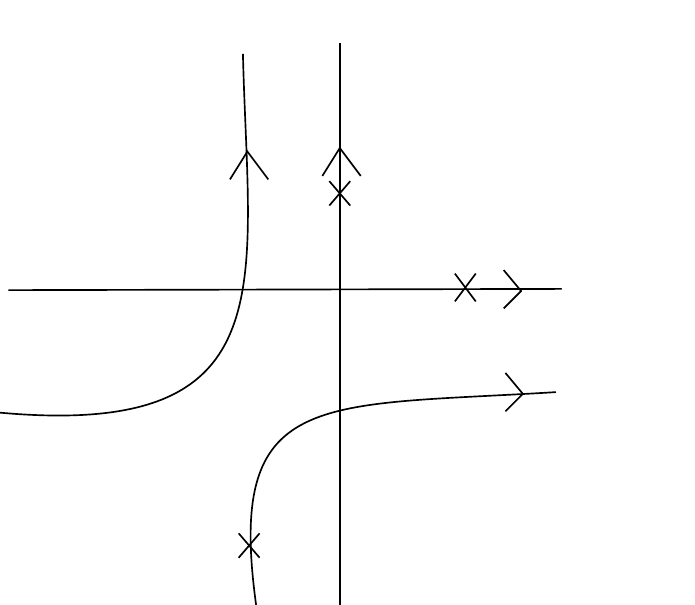} 

 \put(10,0){\def\svgwidth{0.35\textwidth}
 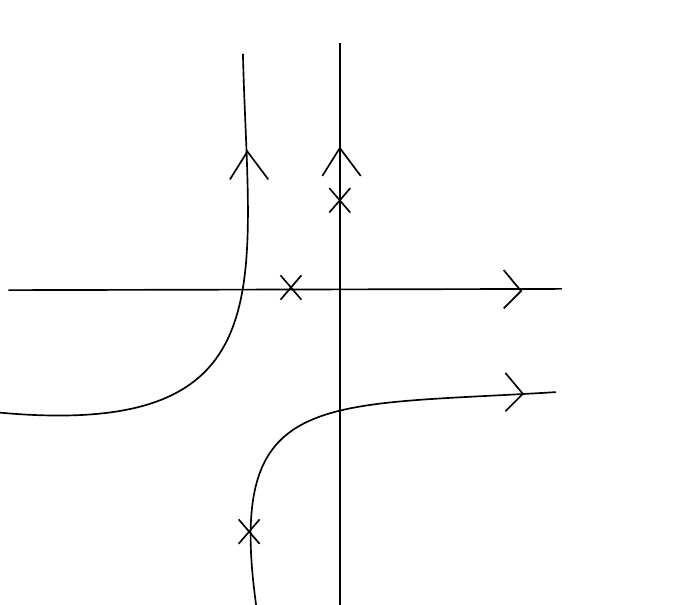} 
\label{markedpoints}
\end{picture}

\caption{Contributions of marked points to sign}
\label{fig:markedpoints}
\end{figure}

\begin{lem}
For a curve $\a\#_c\b$, which is $C^1$-close to $\a\cup \b$ away from a neighborhood of $c$, there is the following isomorphism of $\mathbb{Z}$-modules
$$\hat{m}_2(b,\hat{m}_2(a,\cdot)): CF^*(\g,\a\#_c\b)\to CF^*(\g,\a\#_c\b).$$\end{lem}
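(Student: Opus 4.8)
The plan is to expand both twisted products explicitly and to show that the resulting endomorphism of $CF^*(\g,\a\#_c\b)$ equals $\pm\mathrm{Id}$ plus a nilpotent correction. First I would exploit the hypothesis: choose a representative of $\g$ transverse to $\a$, $\b$ and $\a\#_c\b$ and disjoint from the chosen neighbourhood of $c$. Then each point of $\g\cap(\a\#_c\b)$ lies either on the arc of $\a\#_c\b$ running parallel to $\a$ (the \emph{$\a$-side}) or on the arc parallel to $\b$ (the \emph{$\b$-side}), which gives an identification of $\Z$-modules
$$CF^*(\g,\a\#_c\b)\ \cong\ CF^*(\g,\a)\oplus CF^*(\g,\b)\ =\ CF^*(\g,Cone(c)),$$
namely the one already used in the proof of Lemma~\ref{cone}.

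Next I would unfold $\hat m_2$. Since $Cone(c)$ carries twisted differential given by the single morphism $c\in CF^1(\a,\b)$ while $\g$ and $\a\#_c\b$ carry none, and since $c$ is only defined as a morphism $\a\to\b$ (equivalently, the twisted differential of $Cone(c)$ is strictly triangular), the defining sums truncate after one insertion of $c$:
\begin{align*}
\hat m_2(a,x)&=m_2(a,x)\ \oplus\ m_3(c,a,x)\ \in\ CF^*(\g,\a)\oplus CF^*(\g,\b),\\
\hat m_2\big(b,\,y_\a\oplus y_\b\big)&=m_3(b,c,y_\a)\ +\ m_2(b,y_\b)\ \in\ CF^*(\g,\a\#_c\b).
\end{align*}
Composing, the map in question sends a generator $x$ to $m_3\big(b,c,m_2(a,x)\big)+m_2\big(b,m_3(c,a,x)\big)$.

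The geometric core is a local count near $c$ and near $x$. For $x$ on the $\a$-side there is a thin triangle giving $m_2(a,x)=\pm\bar x$, where $\bar x\in\g\cap\a$ is the partner of $x$, followed by a thin quadrilateral sweeping across $c$ giving $m_3(b,c,\bar x)=\pm x$, while the other combination contributes nothing among small polygons; for $x$ on the $\b$-side the roles of the triangle and the quadrilateral are interchanged and one gets $m_2\big(b,m_3(c,a,x)\big)=\pm x$. Keeping track of the sign rule $s(u)$ — the contributions of marked points and of orientation mismatches of the outgoing curve, exactly as in the proof of Lemma~\ref{cone} — shows that each $x$ is returned with a consistent sign, so along the diagonal the map is $\pm\mathrm{Id}$.

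It remains to deal with the remaining polygons. Any $(k+1)$-gon with the prescribed corners that is not one of the thin models above must cross a compact portion of $\g$, and hence has Euler measure bounded away from that of the thin models; taking $\a\#_c\b$ sufficiently $C^1$-close to $\a\cup\b$ excludes them, and in any case they can only lower a filtration by Euler measure, so they assemble into a nilpotent operator $N$. Thus the composition is $\pm\mathrm{Id}+N$ with $N$ nilpotent, which is invertible over $\Z$, giving the asserted isomorphism of $\Z$-modules. The step I expect to be most delicate is the local bookkeeping around $c$: pinning down precisely which thin triangle and which thin quadrilateral occur in each of the two cases and checking that their signs combine to a uniform $\pm1$, since an error there would destroy invertibility over $\Z$ (while remaining harmless over $\mathbb{F}_2$).
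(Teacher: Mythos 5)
Your proof takes essentially the same route as the paper's: identify $CF^*(\g,\a\#_c\b)$ with $CF^*(\g,\a)\oplus CF^*(\g,\b)$ via $C^1$-closeness, unfold $\hat m_2$ through the cone, and verify by a local polygon count near $c$ that each generator $x$ is returned with sign $\pm1$. The paper only writes out the case where the partner point $x'$ lies on $\b$ (cutting the triangle $u$ of Lemma \ref{cone} into a square giving $m_3(c,a,x)=\pm x'$ and a triangle giving $m_2(b,x')=\pm x$), and then asserts that "in the simplest case" these are the only contributions; you treat both the $\a$-side and $\b$-side explicitly, which is a genuine improvement in completeness. One caveat on your handling of the remaining polygons: the claim that they "have Euler measure bounded away from that of the thin models" and therefore strictly lower a filtration is not correct, since by additivity the Euler measure of any immersed convex $n$-gon equals $1-\frac{n}{4}$ regardless of whether it is thin, so Euler measure cannot distinguish the thin models from other polygons with the same number of corners. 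The exclusion of extra contributions must therefore rest on your other remark --- that for $\a\#_c\b$ sufficiently $C^1$-close to $\a\cup\b$, with $\g$ chosen to intersect minimally and away from the neighbourhood of $c$, no other polygons with the prescribed corners exist --- which is precisely the content of the paper's "simplest case" assumption, so the overall argument stands.
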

\begin{proof}
To every intersection point $x\in \g\cap \a\#_c\b$, by $C^1$-closeness, corresponds another intersection point $x'$ between $\g$ and $\a$ or $\b$ (see Figure \ref{connectsum}). Assume $x'$ lies on $\b$, then $\g$ divides the triangle $u$, into a square $u_1$ and a triangle $u_2$. We have
$$\begin{array}{rcl}
    m_3(c,a,x) &=& \pm x'  \,\,\,\,\&\\
    m_2(b,x') & = & \pm x.
  \end{array}
$$
In the simplest case these are the only contributions to the composition and thus establish the desired isomorphism.
\end{proof}
Iterative application of Lemma \ref{cone} gives the following:
\begin{prop}\label{prop cones twisted} Let $\a$ and $\b$ be transversally and minimally intersecting curves.
If $\{c_i\}_{i=1}^{n}$ and $\{b_j\}_{j=1}^{m}$ are the natural bases of $CF^ 1(\a,\b)$ and $CF^1(\a,\b[1])$ respectively, then the Dehn twist $\tau_{\b}^{-1}(\a)$ is quasi-isomorphic to the twisted complex
$$\a\mathop{\xrightarrow{\hspace*{3.2cm}}}\limits^{(c_1,\ldots,c_n,b_1,\ldots,b_m)} \b^{\oplus n}\oplus \b[1]^{\oplus m}.$$
\end{prop}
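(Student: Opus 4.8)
The plan is to reduce the statement to an iterated application of Lemma~\ref{cone}, organizing the Dehn twist $\tau_\b^{-1}(\a)$ as a sequence of connect-sum resolutions performed one intersection point at a time. Recall that $\tau_\b^{-1}(\a)$ is obtained geometrically by cutting $\a$ at each of its intersection points with $\b$ and inserting a copy of $\b$ (with the appropriate orientation) at each such point; up to isotopy this is precisely an iterated connect sum of $\a$ with copies of $\b$. Since $\a$ and $\b$ intersect transversally and minimally, the intersection points are partitioned by sign/degree: the degree~$1$ points of $CF^1(\a,\b)$ are the $c_i$, while the degree~$1$ points of $CF^1(\a,\b[1])$ are the $b_j$ (these are the degree~$0$ points of $CF(\a,\b)$ viewed with shifted grading, i.e.\ the points where one must connect-sum a shifted copy $\b[1]$ to respect orientations). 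So the first step is to verify carefully that $\tau_\b^{-1}(\a)$, up to isotopy, equals the curve $\bigl(\cdots((\a \#_{c_1}\b)\#_{c_2}\b)\cdots\bigr)\#_{b_m}\b[1]$ obtained by resolving all $n+m$ intersection points, with shifts dictated by orientation; here $\b[1]$ just records that the connect sum uses the opposite orientation of $\b$, which is the grading-shift convention.

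Next I would run the induction. Lemma~\ref{cone} states that for a single degree~$1$ intersection $c$ between unobstructed curves, $\a\#_c\b$ is quasi-isomorphic to $Cone(c)\colon \a\xrightarrow{c}\b$. To iterate, apply this to the partially resolved curve $\a' := \a\#_{c_1}\cdots\#_{c_{k-1}}\b$ and the next point $c_k$: one gets $\a'\#_{c_k}\b \simeq Cone(c_k)\colon \a'\to \b$ in $Tw(Fuk(\S))$. Under the quasi-isomorphism $\a'\simeq (\a \to \b^{\oplus(k-1)})$ from the inductive hypothesis, the twisted complex $Cone(c_k)$ unfolds to $\a \to \b^{\oplus(k-1)}\oplus\b$ with structure morphism $(c_1,\ldots,c_{k-1},c_k)$ — here one uses that $c_k$, being $C^1$-close to a genuine intersection of $\a$ with the last $\b$-summand, contributes only to the differential from $\a$ and not between the $\b$'s, by the minimality of intersection and the absence of higher polygons established in Section~\ref{quasi}. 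Continuing through all the $c_i$ and then all the $b_j$ (the latter feeding in shifted summands $\b[1]$ because of the orientation reversal) produces the asserted twisted complex
$$\a\xrightarrow{(c_1,\ldots,c_n,b_1,\ldots,b_m)} \b^{\oplus n}\oplus\b[1]^{\oplus m}.$$

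The main obstacle is the bookkeeping of the orientations and gradings, i.e.\ showing that the $n$ points forced to carry an unshifted $\b$ are exactly the degree~$1$ generators of $CF^1(\a,\b)$ and the remaining $m$ points carry $\b[1]$ and index $CF^1(\a,\b[1])$. This is where the sign of intersection and the local model near each resolved point matter: at a degree~$1$ intersection the connect-sum respects the given orientation of $\b$, whereas at a degree~$0$ intersection one must reverse it, which is encoded as $\b[1]$. A secondary technical point is that at each inductive step the newly resolved point $c_k$ (resp.\ $b_j$) must be verified to be a closed morphism of the enlarged twisted complex and to contribute nothing off-diagonal; this follows as in the proof of Lemma~\ref{cone} together with the vanishing of higher products for $C^1$-close configurations, but one should check that inserting successive copies of $\b$ keeps the relevant local configurations $C^1$-close so that no spurious polygons appear. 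Once these orientation/grading and off-diagonal-vanishing points are settled, the statement follows by induction on $n+m$.
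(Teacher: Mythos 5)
Your proposal takes essentially the same route as the paper, which justifies this proposition with the single line ``Iterative application of Lemma~\ref{cone} gives the following''; your induction on the $n+m$ resolved intersection points, with $\b$ at the degree~$1$ points of $CF^1(\a,\b)$ and $\b[1]$ at those of $CF^1(\a,\b[1])$, is exactly the intended argument. In fact you supply more detail than the paper does, correctly flagging the two points the paper leaves implicit: the orientation/grading bookkeeping and the need to keep successive copies of $\b$ in $C^1$-close position so that no extra polygons contribute off-diagonally.
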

\noindent Note: Here $\b[1]$ denotes the curve obtained by changing the orientation of $\b$.
\subsection{Relations from mapping class group}
Next we seek to understand the kernel of the map $K_0(Fuk(\S))\to H_1(\S)$. We show that this is the subgroup in K-theory generated by collections of curves which bound a subsurface.
\begin{lem}\label{T is well-defined}
If $\g_1$, $\g_2$ are simple closed curves which bound homeomorphic submanifolds of $\S$, then they represent the same class in $K_0(Fuk(\S))$.
\end{lem}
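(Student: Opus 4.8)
The plan is to combine the change of coordinates principle for simple closed curves with the presentation of Dehn twists by mapping cones from Proposition~\ref{prop cones twisted}. A simple closed curve bounding an embedded subsurface of $\S$ is separating, so the hypothesis says precisely that $\g_1$ and $\g_2$ cut $\S$ into pieces of the same genera; by the change of coordinates principle they lie in a common orbit of the mapping class group. I would then reduce, via this principle together with connectivity of the relevant separating-curve configurations, to the case of a single \emph{elementary move} $\d\rightsquigarrow\tau_c^{\pm1}(\d)$, where $c$ meets $\d$ transversally and minimally and both $\d$ and $\tau_c^{\pm1}(\d)$ are separating; it suffices to show each such move preserves the class in $K_0(Fuk(\S))$.

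For one move I would put $\d$ and $c$ in minimal position and apply Proposition~\ref{prop cones twisted} with $\a=\d$, $\b=c$. Letting $n$ and $m$ denote the ranks of $CF^1(\d,c)$ and $CF^1(\d,c[1])$, the proposition presents $\tau_c^{-1}(\d)$ as $\mathrm{Cone}\big(\d\to c^{\oplus n}\oplus c[1]^{\oplus m}\big)$, so in the Grothendieck group $[\tau_c^{-1}(\d)]=n[c]+m[c[1]]-[\d]$. The decisive point is that $\d$, being separating, is null-homologous, hence $\widehat i(\d,c)=0$; since the $\Z/2\Z$-degree of an intersection point is its sign, the number $n$ of positive intersection points of $\d$ with $c$ equals the number $m$ of negative ones. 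As $[c[1]]=-[c]$ in $K_0$ — reversal of orientation is the shift, which negates classes — the term $n[c]+m[c[1]]$ cancels, leaving $[\tau_c^{-1}(\d)]=-[\d]$, and the same computation applies verbatim to $\tau_c$. Thus along any chain of elementary moves from $\g_1$ to $\g_2$, the classes $[\g_1]$ and $[\g_2]$ agree up to a sign.

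The main obstacle is exactly that sign: one must show the accumulated factor is $+1$ rather than $(-1)^{(\text{length of the chain})}$. The minus sign above is an artifact of the orientation with which the twisted complex of Proposition~\ref{prop cones twisted} represents $\tau_c^{-1}(\d)$, so the substantive work is to fix compatible orientations throughout — for instance, orienting each separating curve as the oriented boundary of the subsurface it bounds — and to check that the elementary moves respect them, so that every move is exactly class-preserving. I expect this orientation bookkeeping to be routine but the delicate heart of the argument. As an alternative that avoids enumerating moves, one may express each $\g_i$ directly as an iterated mapping cone over a symplectic basis of curves inside the subsurface it bounds, following one fixed combinatorial recipe determined by the homeomorphism type; the two resulting twisted complexes are then formally identical, and their $K_0$ classes coincide by the same cancellation $n[c]+m[c[1]]=0$ forced by $\widehat i=0$ at each stage — with the same orientation care required.
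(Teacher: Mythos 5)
Your route is the same as the paper's: reduce, via the change of coordinates principle, to showing that a single Dehn twist $\tau_c^{\pm1}$ applied to a separating curve $\d$ preserves the class; apply Proposition~\ref{prop cones twisted}; and observe that $n=m$ because $\d$ is null-homologous, so that $n[c]+m[c[1]]=(n-m)[c]=0$. That cancellation is exactly the paper's argument, and your identification of $n$ and $m$ with the numbers of positive and negative intersection points is correct.

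The one genuine gap is the residual sign you flag yourself: you arrive at $[\tau_c^{-1}(\d)]=-[\d]$ and defer the fix to ``orientation bookkeeping.'' That bookkeeping would not help, because the stray sign multiplies $[\d]$, not $[c]$, so it is insensitive to how you orient the separating curves; and in fact there is no sign to fix once the conventions are read off correctly. In this paper the class of a twisted complex $\G=(\g_k)_k$ is by definition the plain sum $\sum_k[\g_k]$ --- shifts are realized as orientation reversals of individual entries, which is where the relations $[\g[1]]=-[\g]$ live --- so Proposition~\ref{prop cones twisted} yields $[\tau_c^{-1}(\d)]=[\d]+n[c]+m[c[1]]=[\d]$ directly. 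Your $-[\d]$ comes from importing the convention $[\mathrm{Cone}(A\to B)]=[B]-[A]$ while treating the first slot of the twisted complex as unshifted; the two readings are incompatible, and the paper's is the one forced by compatibility with the map to homology: for a \emph{non-separating} $\a$ and any $c$ with $n=m$, the curve $\tau_c^{-1}(\a)$ is homologous to $\a$, so its $K_0$-class must map to $[\a]$ and not $-[\a]$ in $H_1(\S;\Z)$, which pins the coefficient of the first entry to $+1$. With that settled, each elementary move preserves the class on the nose, no sign accumulates along the chain, and your argument closes.
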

\begin{proof}
Let $f$ be the homeomorphism between subsurfaces mapping $\g_1$ to $\g_2$. Since any homeomorphism can be written as a product of Dehn twists it suffices to show for a closed curve $\a$ in $\S$, the Dehn twist of $\g_1$ with respect to $\a$ has the same class in $K$-theory as $\g_1$. That is, one must show that $\tau_{\a}^{-1}(\g_1)$ and $\g_1$ are quasi-isomorphic. This follows using Proposition \ref{prop cones twisted}. Indeed, if $\g_1$ is separating, the intersection number of any curve $\a$ with $\g_1$ vanishes, hence the twisted complex (which is quasi-isomorphic to $\tau_{\a}^{-1}(\g_1)$) constitutes of just one term $[\g_1]$, therefore $\tau_{\a}^{-1}(\g_1)$ and $\g_1$ represent the same class in K-theory.
\end{proof}
The previous lemma generalizes to the case where $\G_1$ and $\G_2$ are a union of curves bounding homeomorphic submanifolds of $\S$, then $\G_1$ and $\G_2$ represent the same class in $K_0(Fuk(\S))$.
Let $T$ denote the class of a curve $\g$ bounding a subsurface of genus one. This class is well-defined: consider the complex $CF^*(\g,\g')$, where the curve $\g'$ is a push-off of $\g$ in $\Sigma$ and has two intersection points $p$ and $q$, so that $$CF^*(\g,\g')\cong \mathbb{Z}\lan p,q \ran.$$ There are two bigons, one with a positive and other with a negative coefficient. Thus $m_1=0$, and $$HF^*(\g,\g')\cong \mathbb{Z}p\oplus \mathbb{Z}q\cong \mathbb{Z}^2.$$ Thus $T$ is not the same as the class of zero in $K_0(Fuk(\Sigma))$. Also by the above Lemma if we transform this $\g$ (via homeomorphism) to another curve $\g''$ bounding another genus one subsurface, the class of these curves remains unchanged.

\begin{figure}
\begin{picture}(0,120)
 \put(-135,30){\def\svgwidth{0.35\textwidth}
 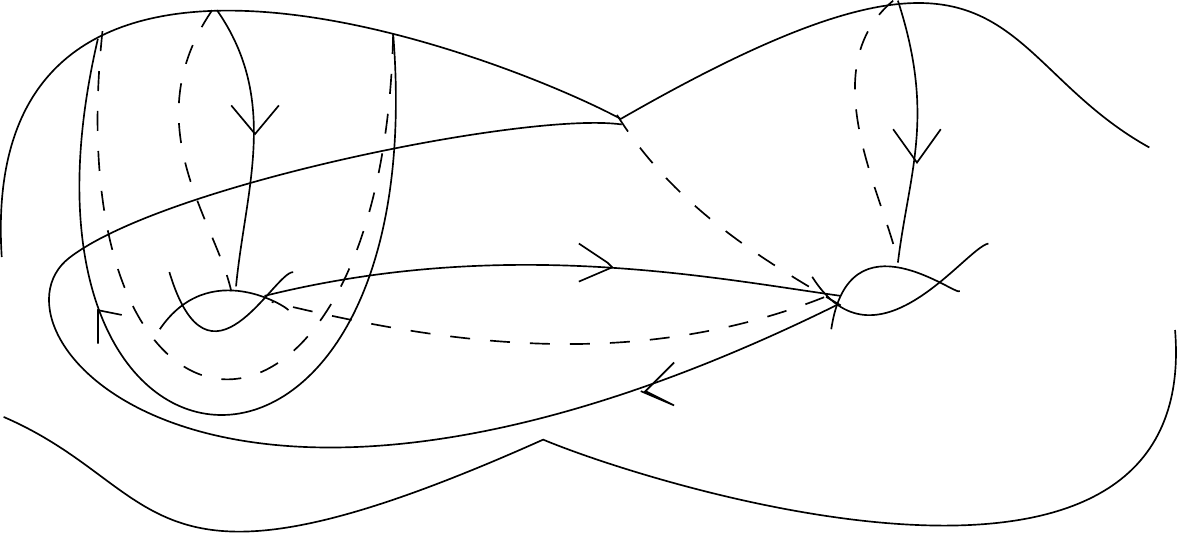} 
 \label{fig:Case_1}
 \put(10,30){\def\svgwidth{0.35\textwidth}
 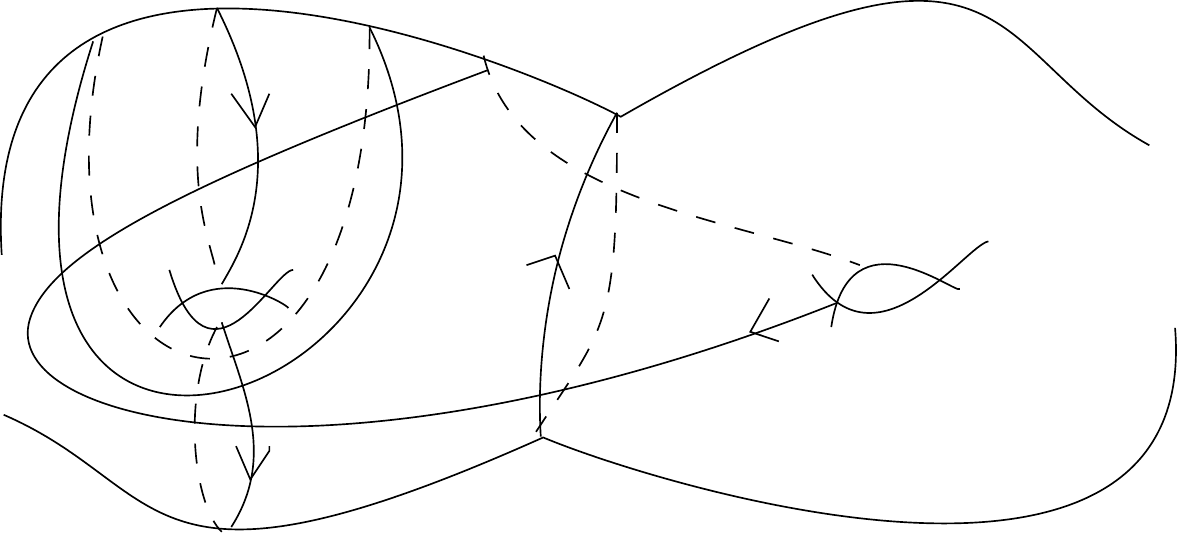} 
 \label{fig:Case_2}
 \put(-90,10){Case 1}
 \put(65,10){Case 2}

\end{picture}

\caption{Case 1: Non-separating curves, Case 2: One separating curve}
\label{fig:Case_1_2}
\end{figure}

\begin{lem}\label{pop lemma}
If $\a_1,\a_2 \& \a_3$ bound a pair of pants, then in K-theory
$$[\a_1]+[\a_2]+[\a_3]=T.$$
\end{lem}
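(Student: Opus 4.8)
The plan is to read off the relation from the cone calculus of Section~\ref{cones}, after normalising the pair of pants by the mapping class group. Orient $\a_1,\a_2,\a_3$ as the boundary of the pair of pants $P$ that they bound, so that $[\a_1]+[\a_2]+[\a_3]=0$ in $H_1(\S;\Z)$; since $T$ is represented by a separating curve, the $H_1$-components of both sides of the asserted identity vanish, and the whole content of the statement is a ``winding-number'' equality — exactly the information that twisted complexes are designed to detect. Two preliminary facts will be used repeatedly. First, $[\g[1]]=-[\g]$ in $K_0$, immediate from Lemma~\ref{cone} applied to a small push-off of $\g$ (equivalently from the two bigons generating $CF^*(\g,\g')$). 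Second, for \emph{any} simple closed curve $\a$ one has $\sum_{i}[\tau_\a^{-1}(\a_i)]=\sum_i[\a_i]$ in $K_0$: by Proposition~\ref{prop cones twisted} each $[\tau_\a^{-1}(\a_i)]$ equals $[\a_i]+(\a_i\cdot\a)[\a]$, and $\sum_i(\a_i\cdot\a)=\bigl([\a_1]+[\a_2]+[\a_3]\bigr)\cdot[\a]=0$. Since $MCG(\S)$ is generated by Dehn twists, it follows that the class $[\a_1]+[\a_2]+[\a_3]$ is a homeomorphism invariant of the configuration, so it is enough to compute it on one representative of each mapping-class-group orbit of pants, i.e.\ on the two configurations of Figure~\ref{fig:Case_1_2}.

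Consider first a pants in which one boundary curve, say $\a_3$, is separating (Case~2 of the figure). Passing to the side of $\a_3$ that contains $P$ and using that $P$ has genus $0$ and $\chi(P)=-1$, one sees that $\a_3=\partial S$ for a one-holed torus $S$ and that $P$ is obtained from $S$ by deleting a regular neighbourhood of a non-separating simple closed curve $a\subset S$. Cutting $S$ along $a$ replaces $a$ by the two remaining boundary circles of $P$, which are isotopic to $a$ but with opposite induced orientations; hence up to isotopy $\{\a_1,\a_2\}=\{a,a[1]\}$, so $[\a_1]+[\a_2]=[a]+[a[1]]=0$ in $K_0$. On the other hand $[\a_3]=[\partial S]=T$ by the discussion of the class $T$ preceding the lemma. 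Therefore $[\a_1]+[\a_2]+[\a_3]=T$ in this case.

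For a pants with all three $\a_i$ non-separating (Case~1 of the figure) one proceeds in the same spirit, now constructing the ambient one-holed torus by hand. Choose a properly embedded arc in $P$ joining $\a_1$ to $\a_2$ and close it up through the connected surface $\S\setminus P$ to obtain a simple closed curve $\d$ with $i(\d,\a_1)=i(\d,\a_2)=1$ and $i(\d,\a_3)=0$. Since $\d$ and $\a_1$ form a chain, the boundary $\gamma=\partial\nu(\d\cup\a_1)$ of a regular neighbourhood bounds a one-holed torus, so $[\gamma]=T$. On the other hand, resolving the intersection points of $\d$ with $\a_1$ and $\a_2$ and invoking Lemma~\ref{cone} (equivalently, running the computation through the length-one twisted complex quasi-isomorphic to $\tau_\d^{-1}(\a_1)$ supplied by Proposition~\ref{prop cones twisted}), and choosing orientations so that the auxiliary class $[\d]$ is fed in once with each sign and cancels, one identifies $[\gamma]$ with $[\a_1]+[\a_2]+[\a_3]$. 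Comparing the two expressions for $[\gamma]$ yields the identity. By the mapping-class-group invariance noted above it is in fact enough to treat $g(\S)=2$, where $\S=P\cup P'$ is a union of two pairs of pants and $\gamma$ can be exhibited explicitly as drawn in Figure~\ref{fig:Case_1_2}.

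The main obstacle is the orientation and sign bookkeeping in Case~1: one must arrange the orientations of $\d$ and of the bands used in the two resolutions so that the auxiliary term $[\d]$ genuinely drops out, leaving precisely $[\a_1]+[\a_2]+[\a_3]$, and one must check that the curve produced by these resolutions really is isotopic to a boundary curve of a one-holed torus (equivalently: is separating, with winding number congruent modulo $\chi(\S)$ to that of a genus-one handle) rather than, for instance, bounding a four-holed sphere. The homological part of every step is automatic from $[\a_1]+[\a_2]+[\a_3]=0$; what the cone calculus contributes, and what makes the statement nontrivial, is the winding-number equality $wd_X(\a_1)+wd_X(\a_2)+wd_X(\a_3)\equiv wd_X(\partial(\text{genus-one handle}))$, both sides being $-\chi(P)=-\chi(\text{one-holed torus})$ modulo $\chi(\S)$.
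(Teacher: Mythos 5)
Your reduction step and your Case~2 both contain genuine gaps. First, the claim that mapping-class-group invariance of $[\a_1]+[\a_2]+[\a_3]$ reduces the lemma to ``the two configurations of Figure~\ref{fig:Case_1_2}'' is false: $MCG(\S)$-orbits of embedded pairs of pants are classified by the homeomorphism type of the complement, so besides the connected-complement configuration and the configuration where one boundary curve cuts off a one-holed torus there are (at least) the orbits where the separating curve cuts off a higher-genus piece, where two of the $\a_i$ cobound a positive-genus subsurface, and where all three $\a_i$ separate. Your Case~2 argument covers only the first of these: the assertion that the side of $\a_3$ containing $P$ ``is a one-holed torus $S$'' does not follow from $\chi(P)=-1$, since that side also contains whatever is glued to $\a_1$ and $\a_2$; the identification $\{\a_1,\a_2\}=\{a,a[1]\}$, and hence the cancellation $[\a_1]+[\a_2]=0$, holds only when $\a_1$ and $\a_2$ cobound an annulus outside $P$. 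The paper needs separate arguments exactly here: a Case~1-style resolution picture when $\a_3$ bounds higher genus, and an induction over pants decompositions combined with $\chi(\S)T=0$ (Lemma~\ref{subgroup lemma}) when all three curves separate. None of these orbits is addressed in your proposal.

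Second, in Case~1 the crux of the lemma is asserted rather than proven. You produce $\g=\partial\nu(\d\cup\a_1)$ with $[\g]=T$ and then state that resolving the intersections of $\d$ with $\a_1$ and $\a_2$ ``identifies $[\g]$ with $[\a_1]+[\a_2]+[\a_3]$'' once the auxiliary $[\d]$ cancels. But $\a_3$ never appears in the resolutions you perform, no explicit sequence of cones is written down, and the isotopy between the resolved curve and $\g$ (as opposed to some other curve in the same homology class) is precisely what you yourself flag as ``the main obstacle'' and leave unverified --- yet that identification \emph{is} the lemma in this case, since the homological part of the identity is trivially $0=0$. The paper's proof supplies exactly this missing verification: it exhibits two explicit chains of resolutions, $\a_2\#\a_3\#\g'$ and $\a_1[1]\#\a_4\#\g'$ with $\a_4$ bounding a one-holed torus and $\g'$ an auxiliary non-separating curve, checks pictorially that the resulting curves are isotopic, and reads off $[\a_1]+[\a_2]+[\a_3]=[\a_4]=T$ from Lemma~\ref{cone}. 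Some concrete verification of this kind cannot be avoided; as it stands your argument establishes the relation only for the single orbit where $\a_1$ and $\a_2$ cobound an annulus and $\a_3$ bounds a one-holed torus.
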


\begin{proof}
Case 1: Suppose all three curves are non-separating curves. Let $\a_4$ be a curve bounding a torus with essential intersection points with $\a_2$. One can find a curve $\g$ as seen in figure. The idea is to successively add $\a_2$ to $\a_3$ and these to the curve $\g$ on one hand, and $\a_1[1]$ to $\a_4$ and then $\g$ on the other hand, to obtain isotopic curves. Given here, is a pictorial proof (for higher genus surfaces, see Figure \ref{fig:proof1}) of the isotopy of the following curves:
$$[\a_2]+[\a_3]+[\g]=-[\a_1]+[\a_4]+[\g]$$
$$\Rightarrow [\a_1]+[\a_2]+[\a_3]=[\a_4]=T,$$
 whereas just the curves are indicated for a surface of genus 2 (see Figure \ref{fig:genus2}).
\begin{figure}
\begin{picture}(0,260)

 \put(-135,170){\def\svgwidth{0.35\textwidth}
 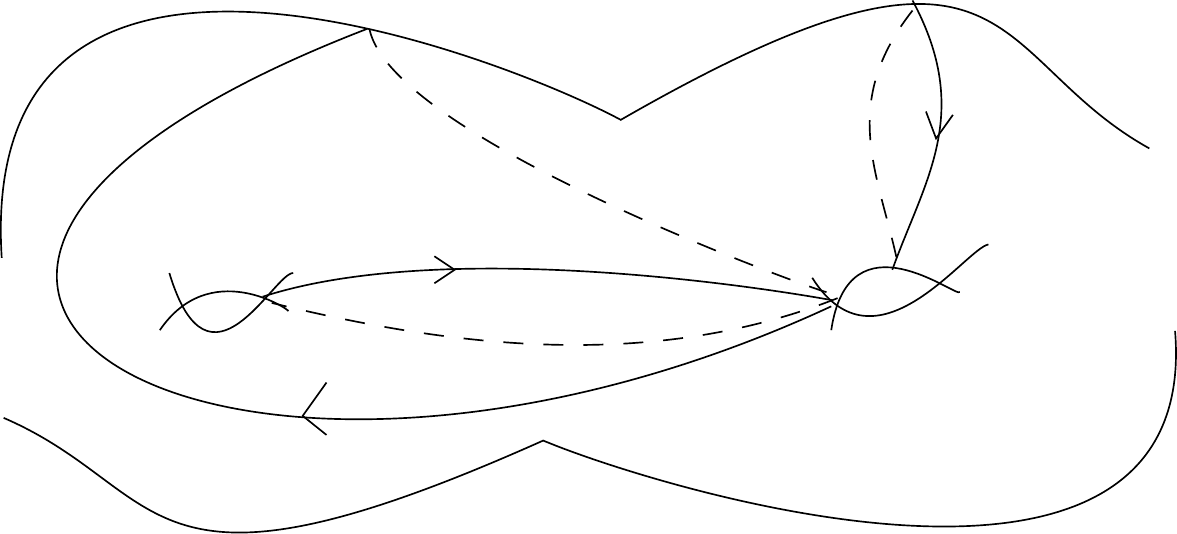} 

 \put(10,170){\def\svgwidth{0.35\textwidth}
 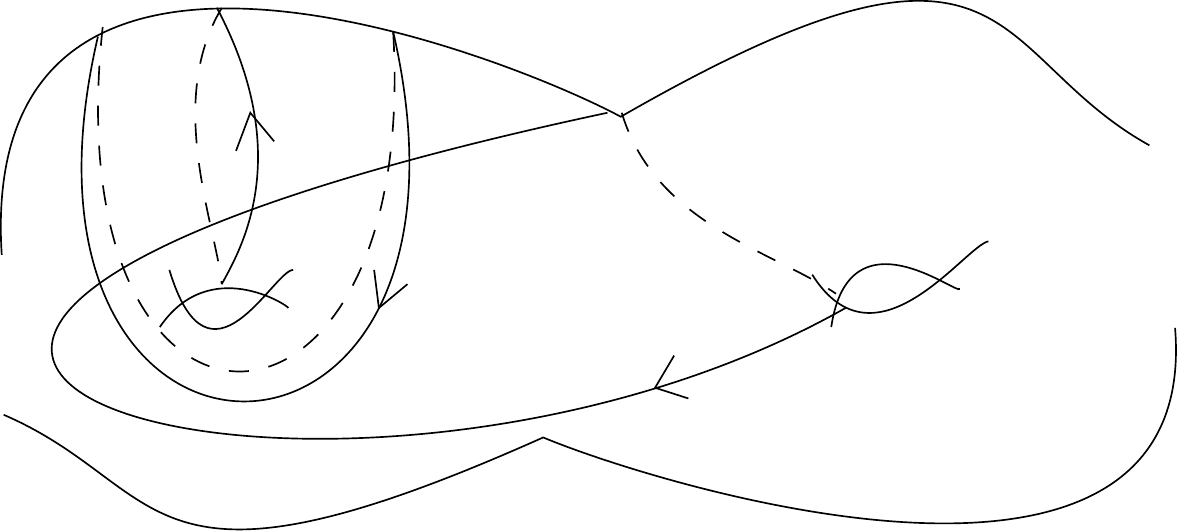} 

 \put(-135,90){\def\svgwidth{0.35\textwidth}
 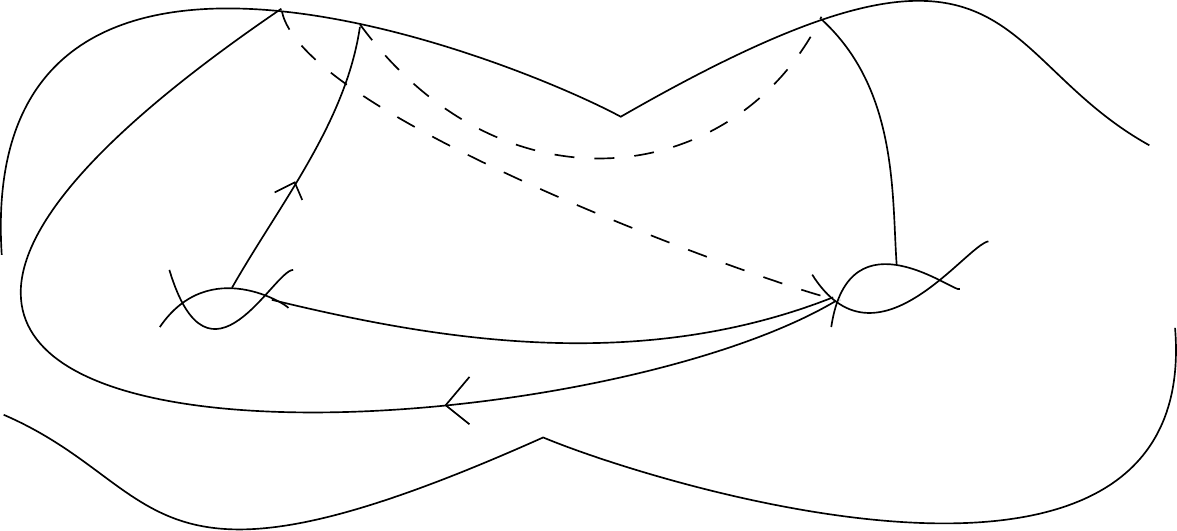} 

 \put(10,90){\def\svgwidth{0.35\textwidth}
 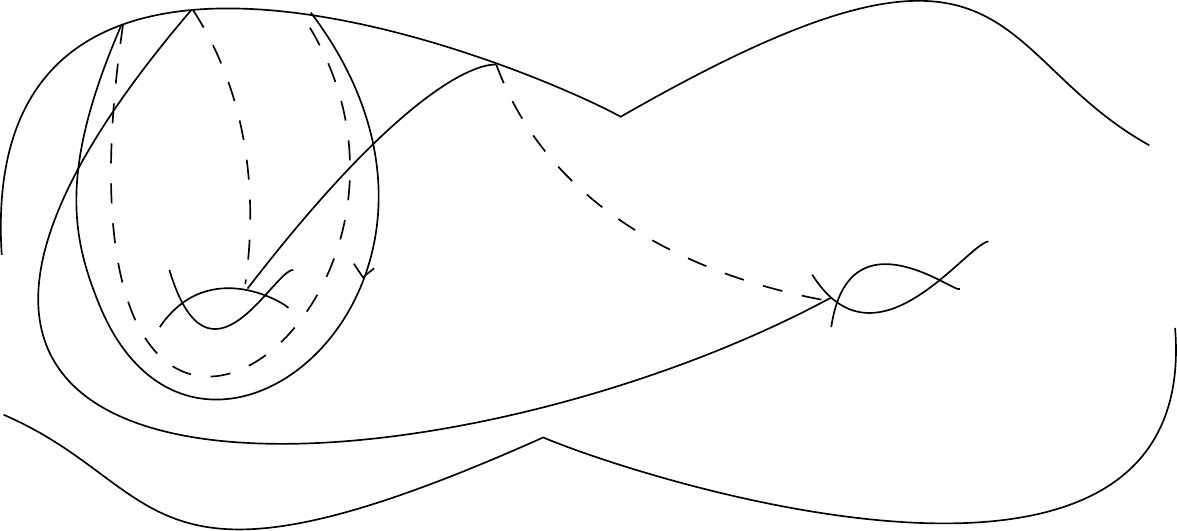} 

 \put(-135,10){\def\svgwidth{0.35\textwidth}
 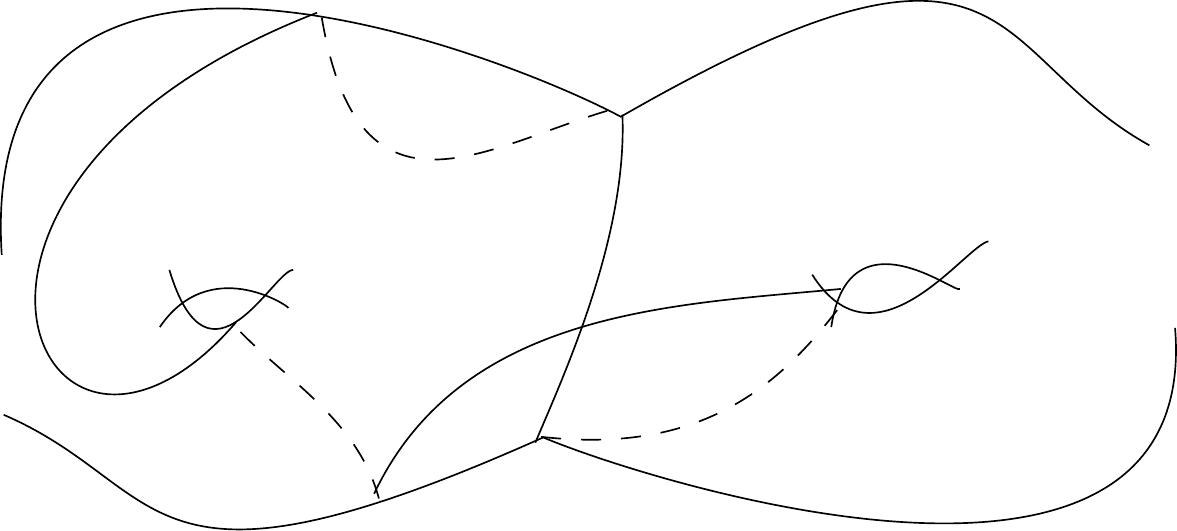} 

 \put(10,0){\def\svgwidth{0.5\textwidth}
 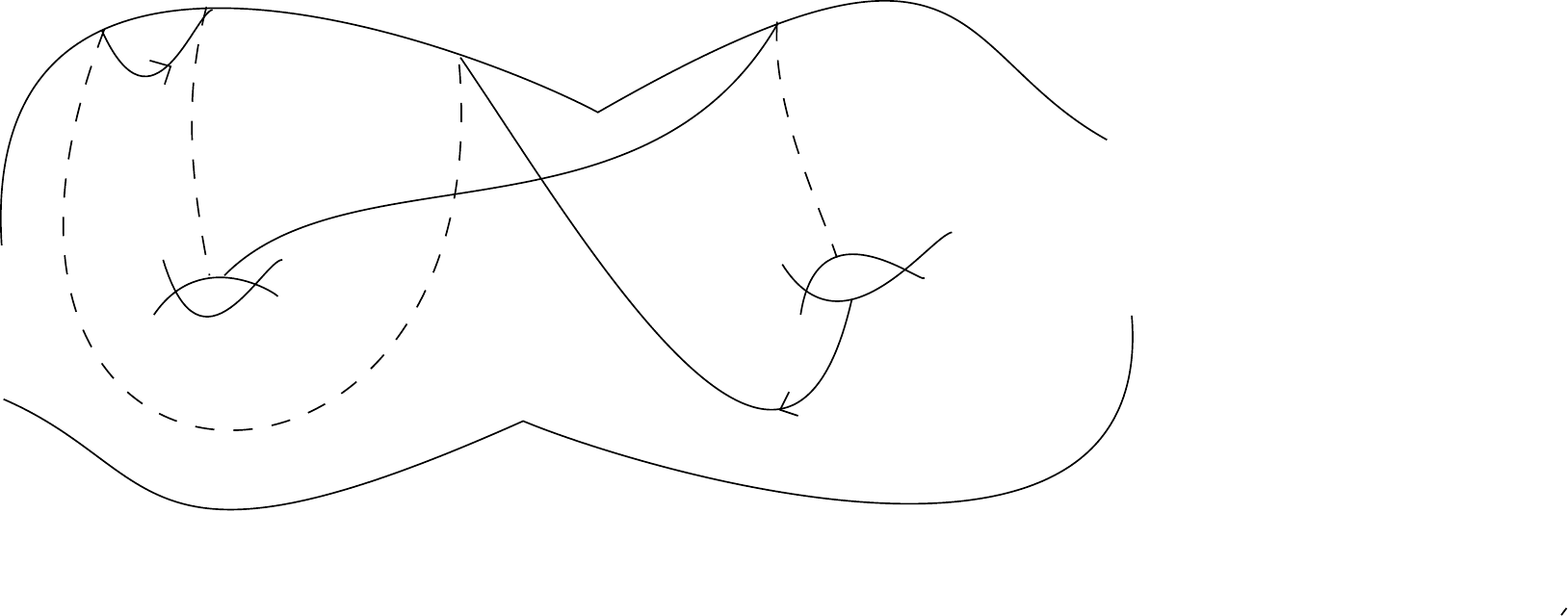} 

 \put(-100,-5){$[\a_2]+[\a_3]+[\g]$}
 \put(30,-5){$-[\a_1]+[\g]+[\a_4]$}

\end{picture}

\caption{Pictorial proof in the case of non-separating curves}
\label{fig:proof1}
\end{figure}

\noindent Case 2: One of the curves, say $\a_3$ bounds a subsurface.
Firstly we consider the situation where $\a_3$ bounds a subsurface of genus 1. This implies that $\a_1$ and $\a_2$ bound a cylinder and so their sum is zero by admissibility (see Remark \ref{admissibility}) and thus $[\a_3]=T$. In case $\a_3$ bounds a surface of higher genus, one can find curves $\a_4$ and $\g$ and show using similar arguments as in Case 1, the desired equality holds. A pictorial proof follows for the case of higher genus (see Figure \ref{fig:proof2}).
\begin{figure}
\begin{picture}(0,80)

 \put(-85,0){\def\svgwidth{0.5\textwidth}
 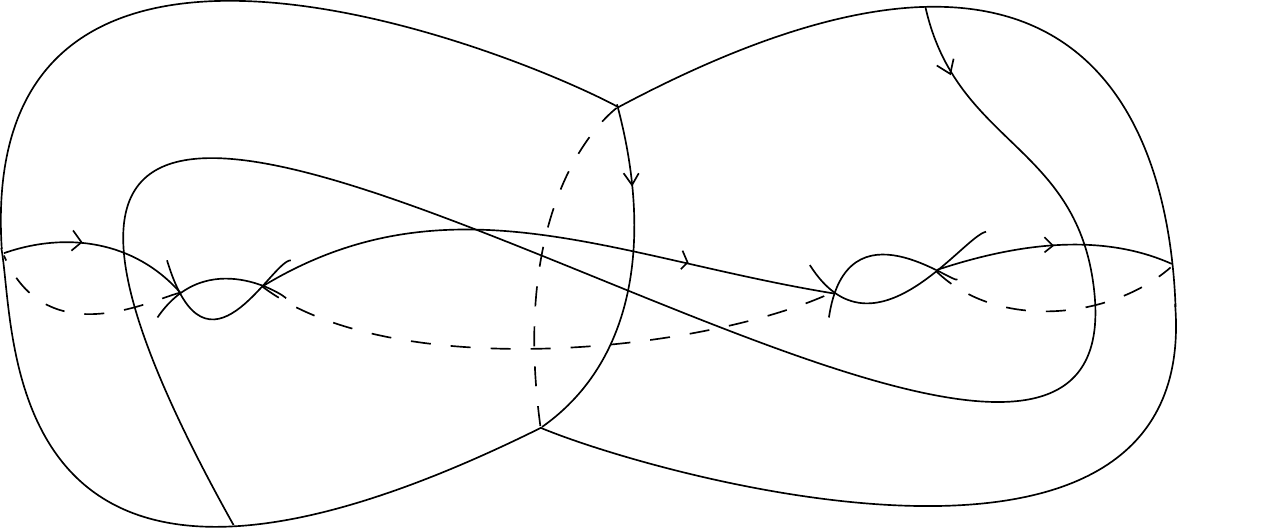} 

\label{genus2}
\end{picture}

\caption{Non-separating curves on a  surface of genus two.}
\label{fig:genus2}
\end{figure}
\begin{figure}
\begin{picture}(0,260)

 \put(-135,170){\def\svgwidth{0.35\textwidth}
 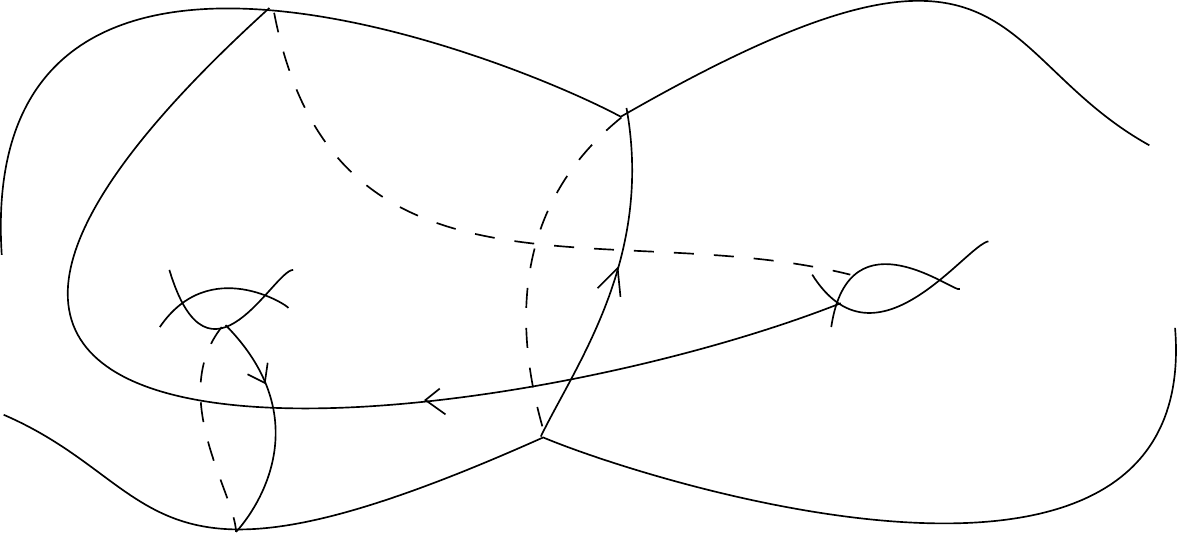} 

 \put(10,170){\def\svgwidth{0.35\textwidth}
 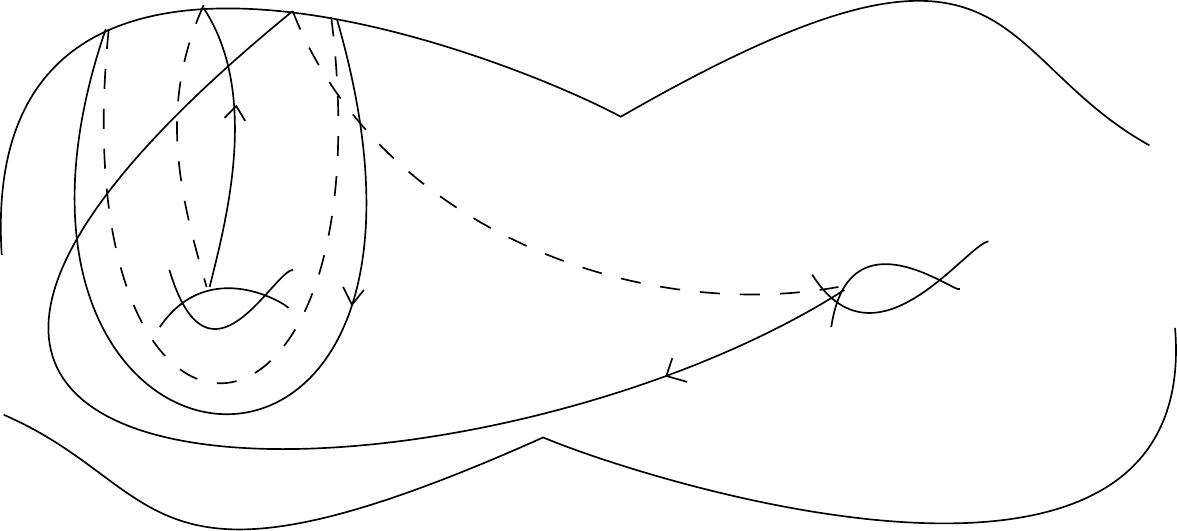} 

 \put(-135,90){\def\svgwidth{0.35\textwidth}
 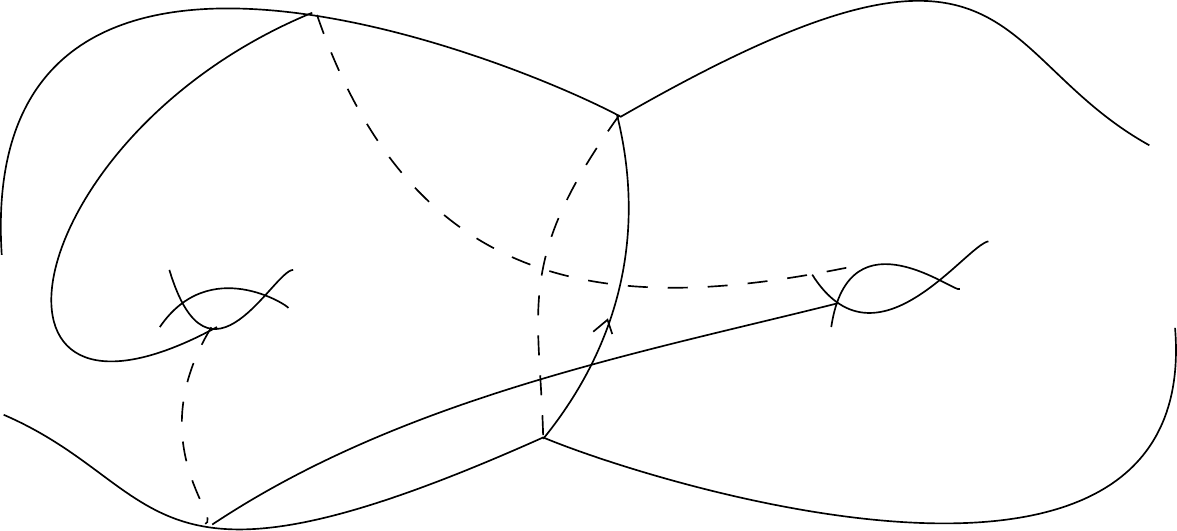}

 \put(10,90){\def\svgwidth{0.35\textwidth}
 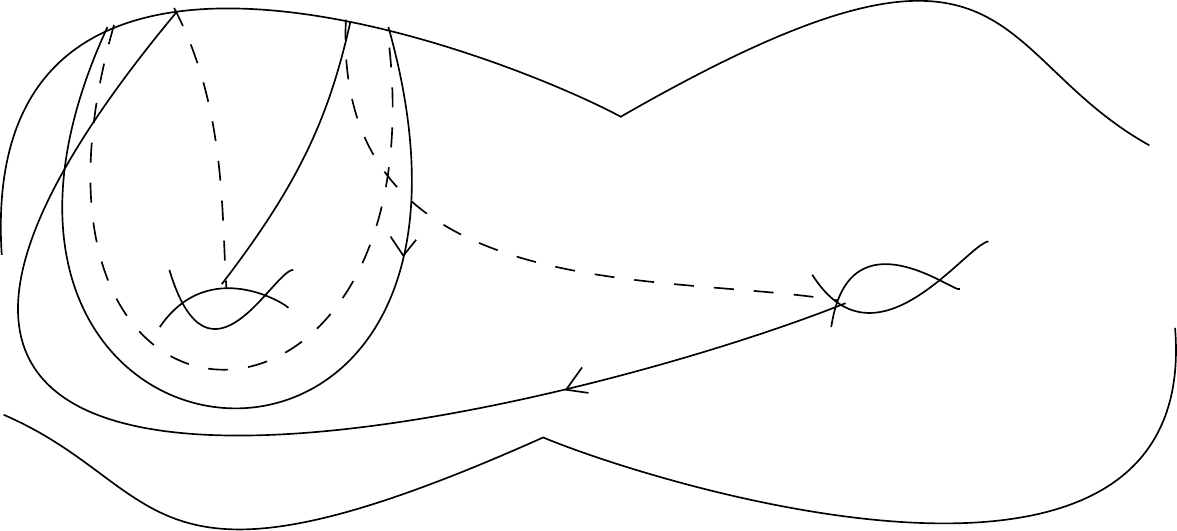} 

 \put(-135,10){\def\svgwidth{0.35\textwidth}
 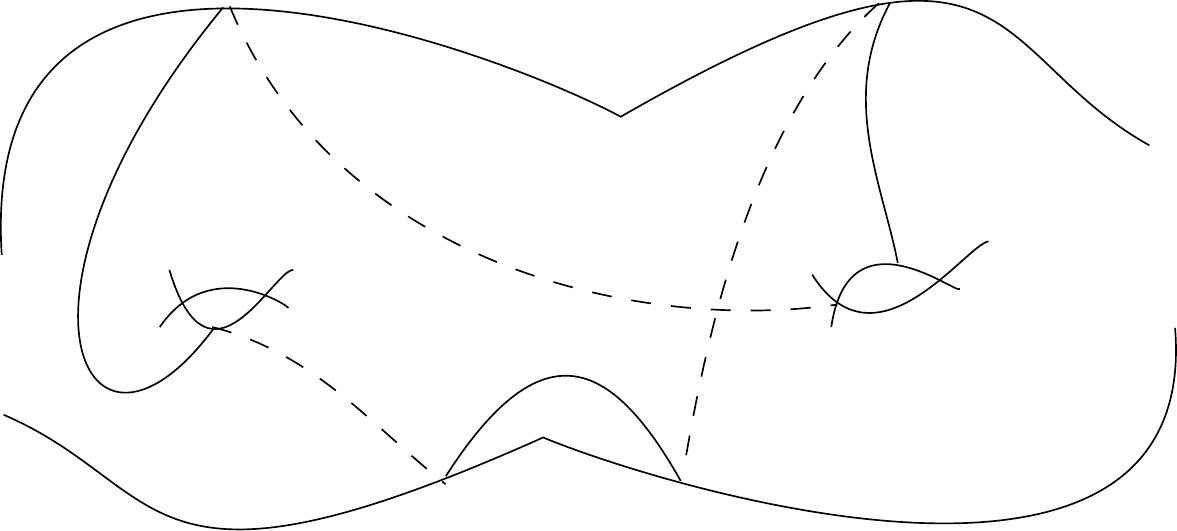} 

 \put(10,0){\def\svgwidth{0.5\textwidth}
 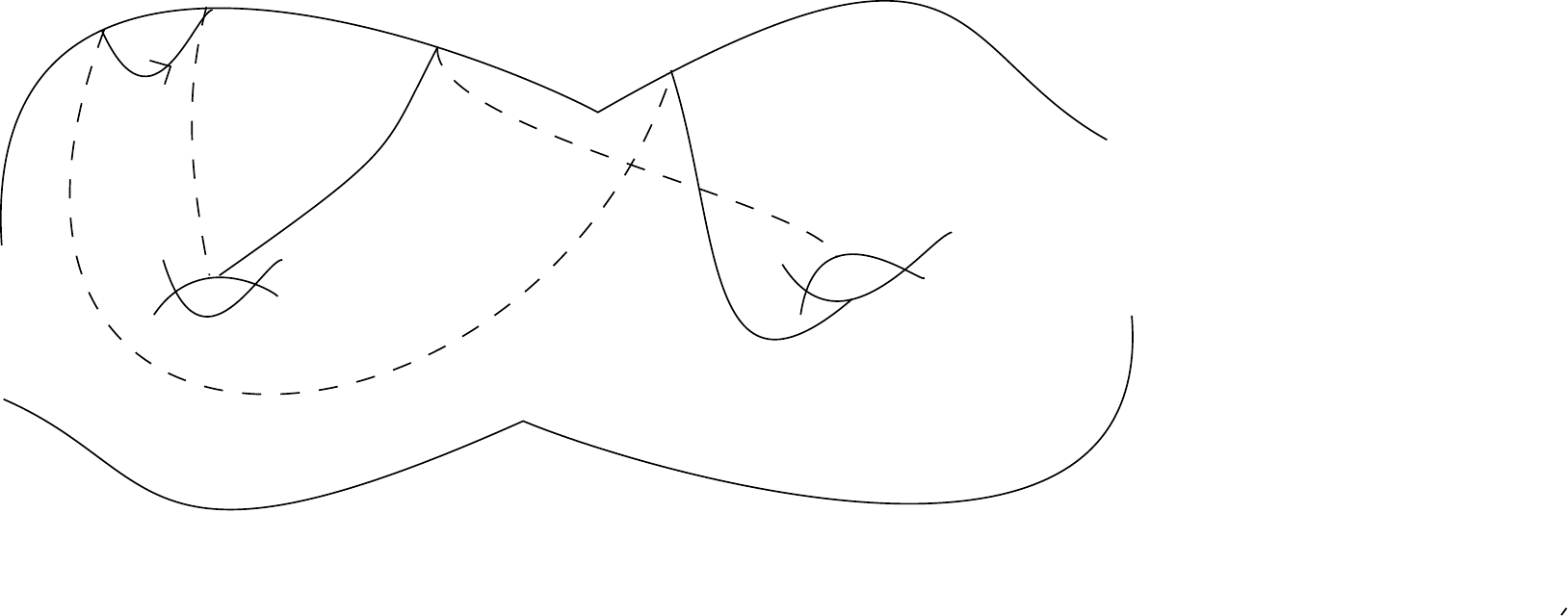}

 \put(-100,-5){$[\a_2]+[\g]+[\a_3]$}
 \put(30,-5){$-[\a_1]+[\g]+[\a_4]$}

\end{picture}

\caption{Pictorial proof for the case of one separating curve}
\label{fig:proof2}
\end{figure}

\noindent Case 3: Each $\a_i$ bounds a subsurface $\Sigma_i$. Now, if $\a_i$ bounds a torus, clearly $[\a_i]=T.$ If $\Sigma_i$ is a subsurface of higher genus, then one can decompose it into pairs of pants. One can show by induction that $$[\a_i]=\chi(\Sigma_i) T,$$ (indeed, as in Case 2, for each pair of pants having exactly one separating curve, the sum of classes of bounding curves equals $T$). By additivity of Euler characteristic and the next lemma, we can deduce the following $$[\a_1]+[\a_2]+[\a_3]=\chi(\Sigma) T+T=T.$$
\begin{figure}
\begin{picture}(0,90)

 \put(-85,0){\def\svgwidth{0.5\textwidth}
 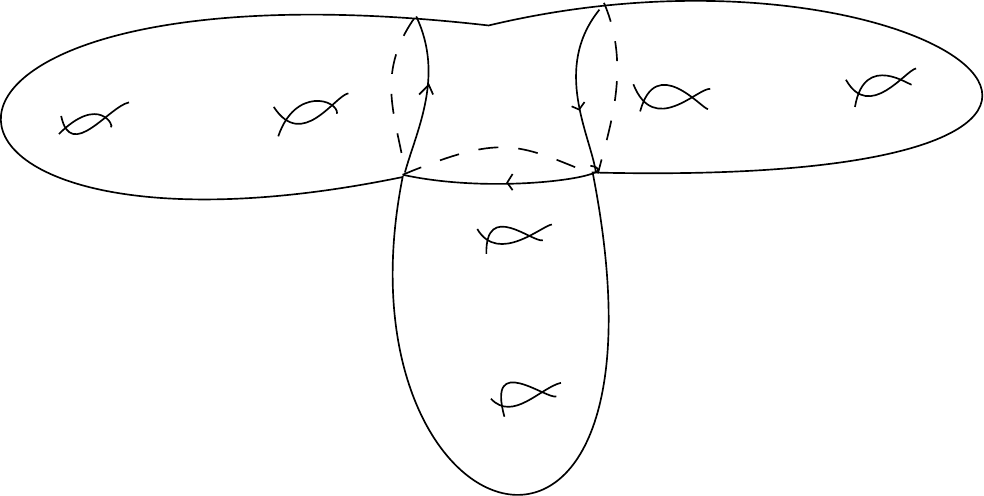} 

\end{picture}

\caption{Case 3: Each curve bounds a subsurface.}
\label{fig:each_separating}
\end{figure}

\end{proof}

\begin{lem}\label{subgroup lemma}
The subgroup of $K_0(Fuk(\Sigma))$ generated by $T$ maps isomorphically to $\mathbb{Z}/\chi(\Sigma)\mathbb{Z}$ under the winding number map. In particular,
$$\chi(\Sigma)T=0.$$
\end{lem}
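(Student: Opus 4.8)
The plan is to combine two inputs: the image of $T$ under the homomorphism $\Phi\colon K_0(Fuk(\S))\to H_1(\S;\mathbb{Z})\oplus\Z/\chi(\S)$ induced by $\phi$ (well defined thanks to the vanishing results for zero objects proved above), and the relation $\chi(\S)\,T=0$ in $K_0(Fuk(\S))$. Together these pin down $\langle T\rangle$ exactly. Throughout I will use the pants relation $[\a_1]+[\a_2]+[\a_3]=T$ only in the case where $\a_1,\a_2,\a_3$ are non-separating, so that no circularity with the separating case — which itself invokes the present lemma — arises.

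\emph{Image of $T$.} First I would take a representative $\g$ of $T$ bounding a genus-one subsurface $\S_1$, chosen with the zero $z$ of $X$ outside $\S_1$. Since $\g$ is separating, $[\g]=0$ in $H_1(\S;\mathbb{Z})$; and since $X$ is a non-vanishing vector field on $\S_1$, the Poincar\'e--Hopf relation for a surface with boundary — the turning numbers of the boundary components relative to a non-vanishing field sum to the Euler characteristic — gives $wd_X(\g)=\chi(\S_1)=-1$. Hence $\Phi(T)=(0,-1)$, and since $\chi(\S)=2-2g\le-2$ the class $-1$ generates $\Z/\chi(\S)$; so $\Phi$ restricts to a surjection of $\langle T\rangle$ onto $0\oplus\Z/\chi(\S)\cong\Z/\chi(\S)$.

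\emph{The relation $\chi(\S)\,T=0$.} For $g\ge2$ one may choose a pants decomposition $\d_1,\dots,\d_{3g-3}$ of $\S$ in which every curve is non-separating (a classical fact about closed surfaces); it has $2g-2$ pairs of pants $P_1,\dots,P_{2g-2}$, and every boundary circle of every $P_j$ is one of the $\d_i$, hence non-separating. Applying the pants relation to each $P_j$,
$$[\partial_1 P_j]+[\partial_2 P_j]+[\partial_3 P_j]=T,\qquad 1\le j\le 2g-2,$$
with boundary orientations, and summing over $j$, the right-hand side becomes $(2g-2)\,T$, while on the left each $\d_i$ occurs as a boundary circle exactly twice, carrying its two opposite induced orientations, so its two contributions cancel, $[\d_i]+[\d_i[1]]=[\d_i]-[\d_i]=0$ — reversing the orientation of a curve is the shift functor, hence negates classes in the Grothendieck group. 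Thus $(2g-2)\,T=0$, that is, $\chi(\S)\,T=0$.

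\emph{Conclusion and main difficulty.} Since $\chi(\S)\,T=0$, the group $\langle T\rangle$ is cyclic of order dividing $|\chi(\S)|$, and by the first step $\Phi$ maps it onto the cyclic group $\Z/\chi(\S)$ of order $|\chi(\S)|$; a surjection between cyclic groups whose target is at least as large as the source is an isomorphism, so $\langle T\rangle\cong\Z/\chi(\S)$ via the winding-number component of $\Phi$, which is the assertion. I expect the middle step to be the delicate one: it requires choosing the pants decomposition so as to stay within the non-separating case of the pants relation, and it rests on the two standard inputs used there — the existence of an all-non-separating pants decomposition for $g\ge2$, and the identity $[\d[1]]=-[\d]$ in $K_0(Fuk(\S))$.
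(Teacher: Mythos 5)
Your proof is correct, but it reaches the relation $\chi(\S)T=0$ by a genuinely different route than the paper. The paper's argument is: for even genus, take a curve $\g$ splitting $\S$ into two homeomorphic halves, use the pants lemma (Case 3, via the inductive sub-claim $[\a_i]=\chi(\S_i)T$) to get $[\g]=\tfrac{\chi(\S)}{2}T$, then use Lemma \ref{T is well-defined} to find a homeomorphism swapping the two halves, which forces $[\g]=[\g[1]]$; combined with $[\g]+[\g[1]]=0$ this yields $-\chi(\S)T=0$ (odd genus is handled with a pair of separating curves). Your argument instead sums the non-separating pants relation over an all-non-separating pants decomposition and lets the $2(3g-3)$ oppositely oriented boundary contributions cancel in pairs, giving $(2g-2)T=0$ directly. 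This buys two things: it stays entirely within Case 1 of the pants lemma, so it visibly avoids any circularity with the separating-curve cases (which in the paper lean on the present lemma for their final step), and it treats all genera uniformly. Your surjectivity step is also sharper than the paper's: rather than asserting the existence of bounding curves realizing every residue mod $\chi(\S)$, you compute $wd_X(T)=\pm 1$ once via Poincar\'e--Hopf on the genus-one piece and note that $\pm1$ generates $\Z/\chi(\S)$. The only ingredients you use beyond the paper's are standard: the existence of a pants decomposition of a closed genus $g\ge 2$ surface with all curves non-separating (equivalently, a bridgeless trivalent dual graph), and the identity $[\d[1]]=-[\d]$, which the paper itself invokes in its own proof of this lemma.
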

\begin{proof} One can always find curves bounding a sub-surface with Euler characteristic between 0 and $\chi(\Sigma)-1.$ Moreover, one can always find a bounding curve with winding number equal to the Euler characteristic of this subsurface modulo $\chi(\Sigma)$. Hence the winding number map is surjective.
Let $\Sigma $ have even genus. Let there be a curve $\gamma$, that separates the surface into homeomorphic submanifolds. By the previous lemma we have the relation $$[\gamma]=-\frac{\chi(\Sigma)}{2} T.$$
Using Lemma \ref{T is well-defined}, if we consider a homeomorphism that permutes these two subsurfaces, then $\g=\g[1]$ and in the $K$-group, $$-\chi(\Sigma)T=[\g]+[\g[1]]=0.$$
If $\Sigma $ has odd genus, we consider a pair of separating curves to obtain the same relation. This relation gives us the desired result.
\end{proof}

\begin{prop}
If $\a_1$ and $\a_2$ are two homologous immersed curves such that $$wd_X(\a_1)=wd_X(\a_2)$$
then they represent the same class in $K_0(Fuk(\Sigma)).$\end{prop}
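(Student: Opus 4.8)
The plan is to express $[\a_1]-[\a_2]\in K_0(Fuk(\S))$ as $\pm\chi(S)\,T$ for a suitable subsurface $S\subseteq\S$ by means of a pair of pants decomposition, and then to use the equality $wd_X(\a_1)=wd_X(\a_2)$ together with Lemma \ref{subgroup lemma} to force $\chi(S)$ to be divisible by $\chi(\S)$, which makes the class vanish.

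First I would reduce to a disjoint embedded multicurve. Put $\a_1,\a_2$ in minimal position and view $\mu_0=\a_1\sqcup\a_2[1]$ as a twisted complex with zero differential, so that $[\mu_0]=[\a_1]+[\a_2[1]]=[\a_1]-[\a_2]$ in $K_0$ (recall $[\g[1]]=-[\g]$). Resolving an intersection point $c$ of degree $1$ between two strands $\a,\b$ of the current multicurve replaces $\a\sqcup\b$ by $\a\#_c\b$, which by Lemma \ref{cone} is quasi-isomorphic to $Cone(c):\a\xrightarrow{c}\b$; since the $K_0$-class of a twisted complex is the sum of the classes of its underlying curves, each such resolution leaves the class in $K_0$ unchanged. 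Doing this for all intersections (the self-intersections being treated in the same spirit, or by first passing to a twisted complex of simple closed curves as in Abouzaid), one obtains a disjoint embedded oriented multicurve $\mu$ with $[\mu]=[\a_1]-[\a_2]$ in $K_0$ and, by the hypothesis, $[\mu]=0$ in $H_1(\S;\Z)$. Being null-homologous, $\mu$ bounds a compact subsurface $S\subseteq\S$, i.e. $\partial S=\mu$ as oriented $1$-manifolds.

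Next I would carry out the pair of pants computation. Choose a pair of pants decomposition of $S$, cutting along disjoint interior circles into $p=-\chi(S)$ pairs of pants $P_1,\dots,P_p$, and orient the three boundary circles of each $P_i$ as $\partial P_i$. The pair of pants lemma gives $\sum_j[\partial_jP_i]=T$ in $K_0$ for every $i$. Summing over $i$ and regrouping by circle, each interior circle bounds two of the pieces with opposite induced orientations, so its two contributions cancel via $[\delta]+[\delta[1]]=0$, while each component of $\mu$ occurs exactly once with its $\partial S$-orientation; hence
$$[\a_1]-[\a_2]=[\mu]=pT=-\chi(S)\,T$$
(up to an overall sign coming from the orientation convention in the pair of pants lemma). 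Applying $wd_X$, which descends to $K_0$ because it vanishes on zero objects (Lemma \ref{q_isom_zero}), and using $wd_X(\a_1)=wd_X(\a_2)$, we get $\chi(S)\,wd_X(T)\equiv 0$ in $\Z/\chi(\S)$. By Lemma \ref{subgroup lemma} the element $wd_X(T)$ generates $\Z/\chi(\S)$, hence is invertible there, so $\chi(S)=m\,\chi(\S)$ for some $m\in\Z$; then $[\a_1]-[\a_2]=\pm m\,\chi(\S)\,T=0$ since $\chi(\S)\,T=0$ by the same lemma, and $[\a_1]=[\a_2]$ in $K_0(Fuk(\S))$.

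The step I expect to be the main obstacle is the reduction in the second paragraph: one has to ensure the resolutions stay within the class of unobstructed curves (this is why $\a_1,\a_2$ are first put in minimal position, so that no resolution produces a null-homotopic or fishtail component), deal cleanly with self-intersections, and keep careful track of orientations so that the interior circles of the pants decomposition really do cancel in pairs when the pair of pants relations are summed. Granting Lemmas \ref{cone}, \ref{subgroup lemma}, \ref{q_isom_zero} and the pair of pants lemma, the rest is elementary arithmetic in $\Z/\chi(\S)$.
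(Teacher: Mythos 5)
Your endgame is the same as the paper's: reduce $[\a_1]-[\a_2]$ to an integer multiple of $T$, then use $wd_X(\a_1)=wd_X(\a_2)$ together with Lemma \ref{subgroup lemma} (so that $wd_X(T)$ is a unit in $\Z/\chi(\S)$ and $\chi(\S)T=0$) to force that multiple to vanish; your appeal to Lemma \ref{q_isom_zero} for descent of $wd_X$ to $K_0$ is also the right citation. The reduction step is where you genuinely diverge. The paper takes an element $\phi$ of the Torelli group with $\phi(\a_1)=\a_2$, factors it into separating twists and bounding-pair maps, and uses Lemma \ref{cone} and Proposition \ref{prop cones twisted} to see that each generator shifts the class by a multiple of $T$. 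You instead smooth $\a_1\sqcup\a_2[1]$ to an embedded null-homologous multicurve and run a pair-of-pants computation on a bounding surface. Your route has the virtue of not needing transitivity of the Torelli action on a homology class of \emph{immersed} curves (a weak point of the paper's own argument, which is really only justified for simple closed curves), but it buys this at the cost of a heavier surgery step.

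That surgery step is where the genuine gaps are. First, the iterated resolution is not covered by Lemma \ref{cone}: that lemma treats two distinct unobstructed curves in minimal position, and a self-intersection point of a single immersed curve is not even a morphism in the precategory (a curve is never transverse to itself). Worse, after resolving one point of $\a\cap\b$ the remaining points of $\a\cap\b$ become self-intersections of $\a\#_c\b$, so the iteration hits the deferred case immediately; making this rigorous is essentially Abouzaid's theorem that every unobstructed immersed curve is quasi-isomorphic to a twisted complex of embedded ones, which this paper does not prove. Second, a null-homologous embedded oriented multicurve $\mu$ need not bound an embedded subsurface: the weight function $w$ on $\pi_0(\S\setminus\mu)$ with $\partial\bigl(\sum_F w_F F\bigr)=\mu$ can take values outside $\{0,1\}$ (two coherently oriented parallel copies of a separating curve already do this). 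You must either decompose $\mu$ as the disjoint union of the boundaries of the level sets $S_j=\overline{\{w\geq j\}}$ and apply your pants argument to each $S_j$, or carry out the computation for weighted $2$-chains with $\chi$ replaced by the Euler measure; the conclusion $[\mu]=-e\,T$ with $e$ the total (weighted) Euler characteristic survives, and the final arithmetic is unaffected. You should also note that disc components of the bounding surface are excluded by unobstructedness and that annulus components contribute $0$, since your count of $-\chi(S)$ pairs of pants tacitly assumes neither occurs.
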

\begin{proof}
The Torelli group acts on our set of curves. This group is generated by Dehn twists about separating curves and by bounding twists. Let $\phi$ be an element of this group that maps $\a_1$ to $\a_2$. Using the previous lemma, it suffices to show that the difference between classes of $\a_1$ and $\a_2$ can be expressed as a multiple of $T$.
For genus two, there are no separating curves to define a bounding twist, so it is enough to consider $\phi $ to be a Dehn twist about a separating curve $\epsilon$. Thus $$[\a_2]=[\phi (\alpha_1)]=[\alpha_1],$$ where the second equality holds due to the non-empty intersection of $\epsilon$ with $\alpha_1$ and Lemma \ref{cone}. In the case of higher genus, each pair of bounding curves gives $$[\d_i]-[\d'_j]=\chi(\Sigma_j) T$$ where $\Sigma_j$ is the embedded subsurface bounded by $\d_j$ and $\d'_j$. Thus $\a_1$ and $\a_2$ have the same class in $K_0(Fuk(\Sigma))$ modulo $T$, considering their winding numbers are given to be equal.
\end{proof}
\begin{figure}
\begin{picture}(0,90)

 \put(-135,0){\def\svgwidth{0.8\textwidth}
 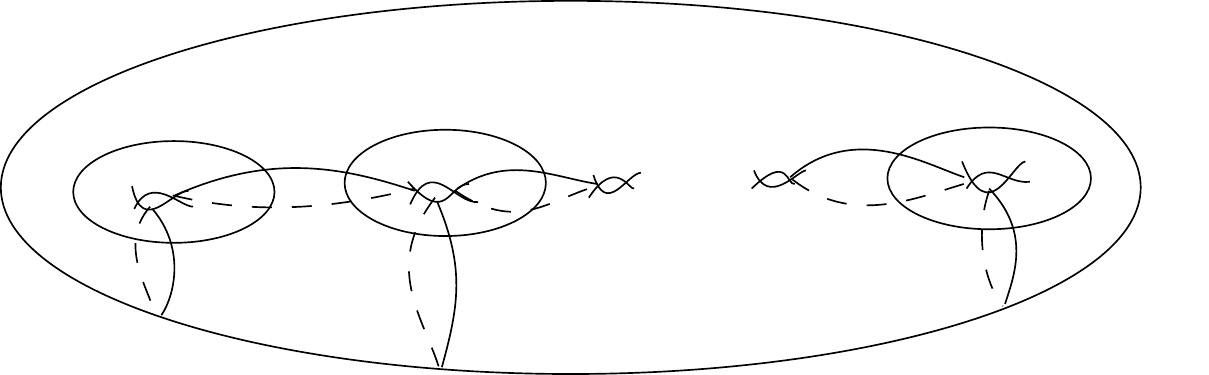}

\end{picture}

\caption{Lickorish generators of the mapping class group}
\label{fig:lickorish}
\end{figure}

\emph{Proof of Theorem 1.1.} We have constructed the following map $$K_0(Fuk(\Sigma))\to H_1(\Sigma)\oplus \mathbb{Z}/\chi(\Sigma)\mathbb{Z}$$
which was surjective to begin with. To show injectivity, we show that any twisted complex having a trivial cohomology class lies in the subgroup generated by $T$. The separating curves we know by Lemma \ref{pop lemma}, are mapped to the subgroup generated by $T$.  Assume that $[\Gamma]$ can be expressed as a sum of classes of non-separating curves. Since the mapping class group acts transitively on isotopy classes of non-separating curves, Proposition \ref{prop cones twisted} implies that the non-separating curves appearing in $[\Gamma]$ can be written as a linear combination of Lickorish generators \cite{lickorish1964finite} seen in Figure \ref{fig:lickorish}. If the curves $\a_i$ and $\b_i$ are a basis for $H_1(\Sigma,\mathbb{Z})$, and $\g_i$ bounds a pair of pants with two of these basis elements, then $$[\a_i]-[\a_{i+1}]+[\g_i]=T.$$ In particular, if $[\Gamma]$ represents the trivial class in homology, then we can decompose it as a linear combination of the class of curves bounding pairs of pants, which implies that $[\Gamma]$ indeed lies in the subgroup generated by $T$.\hspace*{5.8cm}$\square$

\section{Action of the mapping class group}

We define in this section a natural mapping class group action on $Fuk(\Sigma)$. The main result here is that this action is faithful.
\begin{defn}
Let $f\in MCG(\Sigma)$, this mapping class acts on the set of objects of $Fuk(\Sigma)$ in the natural manner. For a pair of curves $\a,\b \subset \Sigma$, the mapping class $f$ acts on the morphisms as follows, $$f\cdot HF(\a,\b)=HF(f(\a),\b).$$
\end{defn}
We begin by stating a well-known geometric interpretation of Floer homology.
\begin{lem}
Let $\a,\b$ be closed curves in $\S$, intersecting transversally. Then, $$\dim_{\mathbb{Z}}{HF(\a,\b)}=i(\a,\b).$$
\end{lem}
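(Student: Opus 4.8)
The plan is to reduce to geodesic (equivalently, minimal‑position) representatives, show that the Floer differential then vanishes for geometric reasons, and read off the dimension at the chain level.

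\emph{Step 1: reduction to nice representatives.} Since $HF(\a,\b)$ is defined, the pair $(\a,\b)$ is admissible, and by Theorem~\ref{E:isotopy} the group $HF(\a,\b)$ depends only on the isotopy (free homotopy) classes of $\a$ and $\b$. Fix a hyperbolic metric on $\S$ (possible since the genus is $>1$) and replace $\a,\b$ by their geodesic representatives $\a_0,\b_0$. These are unobstructed, because a complete geodesic in $\widetilde\S=\mathbb{H}^2$ is properly embedded; they are automatically transverse, since two distinct geodesics through a point of $\S$ have distinct tangent directions; and the pair $(\a_0,\b_0)$ is admissible (two non‑freely‑homotopic geodesics cannot cobound an Euler‑zero $2$‑chain with coefficients all of one sign), so Theorem~\ref{E:isotopy} gives $HF(\a,\b)\cong HF(\a_0,\b_0)$. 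Moreover geodesic representatives realize the geometric intersection number, so $|\a_0\cap\b_0| = i(\a,\b)$.

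\emph{Step 2: no bigons, hence $m_1=0$.} I claim $\mc{M}(q,p)=\emptyset$ for all $p,q\in \a_0\cap\b_0$. Given a bigon $u\colon\mathbb{D}\to\S$, lift it (using that $\mathbb{D}$ is simply connected) to $\widetilde u\colon\mathbb{D}\to\mathbb{H}^2$. The clockwise and anticlockwise boundary arcs of $\widetilde u$ are connected, hence map into single connected components of the preimages of $\a_0$ and $\b_0$, that is, into two complete geodesics $\widetilde\a_0,\widetilde\b_0\subset\mathbb{H}^2$. They are distinct (otherwise $\a_0$ and $\b_0$ would share a geodesic arc, violating transversality, or coincide, violating admissibility), and they both pass through $\widetilde u(1)$ and $\widetilde u(-1)$, the distinct lifts of $p\neq q$. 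But two distinct complete geodesics of $\mathbb{H}^2$ meet in at most one point, a contradiction. Thus the differential $d=m_1$ on $CF^*(\a_0,\b_0)$ is zero. (Alternatively one may cite the bigon criterion together with the universal‑cover argument ruling out immersed bigons, as in \cite{de2014combinatorial}.)

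\emph{Step 3: conclusion.} Since $d=0$, we get $HF^*(\a_0,\b_0)=CF^*(\a_0,\b_0)$, the free $\mathbb{Z}$-module on $\a_0\cap\b_0$, of rank $|\a_0\cap\b_0|=i(\a,\b)$; combined with Step 1 this gives $\dim_{\mathbb{Z}}HF(\a,\b)=i(\a,\b)$.

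The step I expect to be the main obstacle is Step 2: one must rule out \emph{immersed} bigons, not merely embedded ones, for curves that need not be simple. The unobstructedness hypothesis is precisely what makes the lifts well behaved, and together with hyperbolicity of $\S$ it closes the gap; a secondary point, already flagged in Step 1, is verifying that the geodesic representatives remain admissible so that Theorem~\ref{E:isotopy} applies.
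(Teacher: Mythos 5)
The paper states this lemma without proof, as a ``well-known'' fact, so your argument supplies content the paper omits; the hyperbolic-geodesic route is the natural one, and Steps 2--3 are correct as far as they go: lifting a putative bigon to $\mathbb{H}^2$ and using that two distinct complete geodesics meet in at most one point does kill the differential, and once $d=0$ the rank of $HF$ is just the number of intersection points.

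The genuine gap is that Step 1 silently excludes exactly the case the paper is most careful about: curves that are freely homotopic (or, more generally, powers of a common primitive class, or each other with reversed orientation). For such $\a,\b$ the geodesic representatives have the same image, so they are not transverse and cannot serve as the representatives $\a_0,\b_0$; any perturbation reintroduces bigons, so Step 2 fails as well. Worse, the conclusion itself changes in this case: as the paper notes right after the lemma, admissibility forbids disjoint representatives of isotopic curves (they would cobound a cylinder, an Euler-zero $2$-chain with coefficients of a single sign), so the relevant ``geometric intersection number'' is $2$, not $0$, and $HF(\a,\b)$ has rank $2$ --- computed in the paper's next lemma from the two intersection points of a push-off, whose two bigons contribute with opposite signs so that $d=0$ anyway. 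Your argument as written would output the standard $i(\a,\b)=0$ and a vanishing $HF$, both wrong under the paper's conventions, so this case must be treated separately. A secondary point: your claim that geodesics ``realize the geometric intersection number'' should be phrased against the paper's nonstandard definition of $i$ (minimum over admissible, no-bigon representatives), since the two notions diverge precisely in the degenerate case above.
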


Here $i(\a,\b)$ is the geometric intersection number of $\a$ and $\b$. This is defined as the minimal intersection number between any two isotopic representatives $\a', \b'$ of curves $\a \& \b$ with no bigons occurring between them. When $\a$ and $\b$ are isotopic, their geometric intersection number due to admissibility, is at least two. One has the following simple lemma.

\begin{lem}
The Floer homology for curves $\a$ and $\b$ is trivial if and only if $\a \& \b$ are non-isotopic and have disjoint representatives.
\end{lem}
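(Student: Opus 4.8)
The plan is to derive the statement formally from the two preceding lemmas, with the geometric intersection number as the bridge. Since all Floer complexes in this setting are free $\Z$-modules, $HF(\a,\b)$ is trivial if and only if $\dim_\Z HF(\a,\b)=0$, and by the lemma $\dim_\Z HF(\a,\b)=i(\a,\b)$ this holds precisely when $i(\a,\b)=0$. So the whole statement reduces to the purely topological equivalence: $i(\a,\b)=0$ if and only if $\a$ and $\b$ are non-isotopic and admit disjoint representatives.

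For that I would simply unwind the definition of $i(\a,\b)$ recalled just above: it is the minimum of $|\a'\cap\b'|$ over admissible pairs $(\a',\b')$ with $\a'$ isotopic to $\a$, $\b'$ isotopic to $\b$, and no bigon occurring between them. Hence $i(\a,\b)=0$ means exactly that some such $\a',\b'$ are disjoint, i.e. that $\a$ and $\b$ have disjoint representatives. The implication ``$\a,\b$ non-isotopic and disjoint $\Rightarrow HF(\a,\b)=0$'' is then immediate, since disjoint representatives realise $i(\a,\b)=0$. For the reverse implication, assume $HF(\a,\b)=0$; then $i(\a,\b)=0$, so disjoint representatives exist, and if in addition $\a$ were isotopic to $\b$ the remark preceding this lemma would give $i(\a,\b)\geq 2$, a contradiction. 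Therefore $\a$ and $\b$ are non-isotopic, which finishes the equivalence.

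I do not anticipate a genuine obstacle here: all the content lies in the identification $\dim_\Z HF(\a,\b)=i(\a,\b)$ and in the already-recorded fact that two isotopic admissible curves must intersect at least twice; the rest is bookkeeping with definitions. If one wished to make the argument self-contained, the single point worth spelling out is precisely that last fact, namely that two disjoint isotopic essential curves cobound an embedded annulus in $\S$ — an Euler zero $2$-chain all of whose coefficients have the same sign — which is exactly what the admissibility condition of Definition \ref{admissibility} forbids.
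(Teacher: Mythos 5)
Your proposal is correct in substance but follows a genuinely different route from the paper. You derive the statement formally from the preceding dimension lemma $\dim_{\mathbb Z} HF(\a,\b)=i(\a,\b)$, pushing everything into the topological equivalence ``$i(\a,\b)=0$ iff disjoint representatives exist and the curves are non-isotopic,'' and then closing the isotopic case with the already-recorded fact that admissibility forces $i(\a,\b)\geq 2$ for isotopic curves. The paper instead argues directly at the chain level in each of the two relevant configurations: for isotopic curves it exhibits the two bigons whose signed contributions cancel, so $d=0$ and $HF$ has rank $2$; for disjoint non-isotopic curves it perturbs to representatives with two intersection points joined by a single bigon, so that each generator is simultaneously closed and exact and $HF=\{0\}$. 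Your approach is more economical and makes the logical dependence on the dimension lemma explicit, but it inherits that lemma's status (it is only stated, not proved, in the paper), whereas the paper's bigon count is self-contained. One wrinkle worth flagging in your version: your opening claim that $HF(\a,\b)$ is trivial iff $\dim_{\mathbb Z}HF(\a,\b)=0$ ``since the Floer complexes are free $\mathbb Z$-modules'' is not quite a justification --- freeness of the chain modules does not by itself rule out torsion in homology, so rank zero does not formally imply triviality. The paper's direct argument closes exactly this point by showing that for suitable representatives the differential sends one generator to $\pm$ the other, so the homology vanishes on the nose; if you keep your route, you should either invoke that explicit computation for the $i=0$ case or note that the bigon differentials here always have coefficients $\pm 1$, so no torsion can appear.
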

\begin{proof}
If two curves $\a\&\b$ on a closed surface are isotopic, they have two bigons forming between them, which occur each with a positive and a negative coefficient. The differential vanishes, and the rank of $HF(\a,\b)$ is 2. If $\a\& \b $ are disjoint, one can choose representatives with a non-trivial intersection. This gives only one bigon between two intersection points which are closed and are at the same time exact. Thus $HF(\a,\b)=\{0\}$.
\end{proof}
 We now prove faithfulness of the action of $MCG(\S)$ on the Fukaya category $Fuk(\S)$. Considering two admissible essential closed curves $\g_1$ and $\g_2$, the following lemma is key.
\begin{lem}
For curves $\g_1,\g_2$ as above, if the following condition for Floer homology holds
$$HF(f(\g_1),\g_2)=HF(\g_1,\g_2),$$ then the curve $f(\g_1)$ is isotopic to the curve $\g_1$, for some $f\in MCG(\S).$
\end{lem}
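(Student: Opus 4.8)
As the statement is applied in the proof of Theorem~1.2, one fixes the essential simple closed curve $\g_1$ and reads the hypothesis as: $HF(f(\g_1),\g_2)\cong HF(\g_1,\g_2)$ for \emph{every} curve $\g_2$ for which both $(\g_1,\g_2)$ and $(f(\g_1),\g_2)$ are admissible pairs. The first step is to replace Floer homology by intersection numbers: by the preceding lemma $\dim_{\mathbb{Z}}HF(\a,\b)=i(\a,\b)$, so the assumed isomorphism gives $i(f(\g_1),\g_2)=i(\g_1,\g_2)$. Since by Definition~\ref{admissibility} any two non-homologous curves form an admissible pair, this equality holds for \emph{every} essential simple closed curve $\g_2$ whose homology class avoids the finite set $E=\{\pm[\g_1],\pm[f(\g_1)]\}\subset H_1(\S;\mathbb{Z})$.

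The second step is to feed this into the classical fact that the isotopy class of an essential simple closed curve on a closed surface is completely determined by its geometric intersection numbers with all essential simple closed curves (part of Thurston's theory of measured laminations; see \cite{fathi1991travaux}). The efficient way to pass from the weaker statement that $i(f(\g_1),\cdot)=i(\g_1,\cdot)$ away from the classes in $E$ to the full statement is to recall that geometric intersection number extends to a continuous function on measured laminations and that the simple closed curves whose homology class avoids $E$ are still dense in $\mathcal{PML}(\S)$; hence $i(f(\g_1),\cdot)=i(\g_1,\cdot)$ on all of $\mathcal{ML}(\S)$, in particular on every essential simple closed curve, and the classical result then forces $f(\g_1)$ to be isotopic to $\g_1$. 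Alternatively, if $f(\g_1)\not\simeq\g_1$ one constructs by hand a distinguishing curve $c$ --- crossing, in the surface cut along one of the two curves, a minimal-position essential arc of the other while remaining disjoint from it, so that $i(\cdot,c)$ vanishes on one of $\g_1,f(\g_1)$ and is positive on the other --- and then moves $[c]$ off $E$ by twisting along curves disjoint from $\g_1\cup f(\g_1)$ or by band-summing $c$ with small loops, using that $g\ge2$ always leaves room.

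I expect the only real difficulty to be in making the second step airtight, i.e. verifying that restricting to $\g_2$ with $(\g_1,\g_2)$ and $(f(\g_1),\g_2)$ both admissible --- equivalently, to $\g_2$ outside two homology classes --- still leaves a family of test curves rich enough for the classical determination theorem to apply. Once that is granted, the passage through $\dim_{\mathbb{Z}}HF=i$ and the invocation of \cite{fathi1991travaux} are both immediate.
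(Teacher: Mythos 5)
Your argument is correct in outline but takes a genuinely different route from the paper's. The paper argues locally and by hand: it sets $\g_1=\a_1$, tests against the curve $\g$ bounding the genus-one subsurface containing $\a_1$ (forcing $HF(f(\a_1),\g)=0$) and against the dual curve $\b_1$ (forcing $\dim HF(f(\a_1),\b_1)=1$), concludes that $f(\a_1)$ can be isotoped into that one-holed torus and is therefore isotopic to $\a_1+q\b_1$ for some $q\in\Z$, and finally kills $q$ by comparing $HF(f(\a_1),\a_1)$ with $HF(\d_q,\a_1)$ via $i(\d_q,\a_1)=q$; the remaining generators are treated ``similarly''. You instead argue globally: translate the hypothesis into $i(f(\g_1),\cdot)=i(\g_1,\cdot)$ on all test curves whose homology class avoids the finite set $E$, and invoke the classical injectivity of the intersection-number marking. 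What your route buys is uniformity and generality — it handles every essential simple closed curve $\g_1$ at once, with no choice of convenient subsurface — at the cost of heavier input from Thurston theory. The step you rightly flag as delicate does close, and I would commit to the density version rather than the hand-built distinguishing curve (the latter can fail when $\g_1\cup f(\g_1)$ fills $\S$, leaving no auxiliary curve disjoint from both to twist along): the curves avoiding $E$ always contain a full mapping class group orbit (separating curves when $[\g_1]\neq 0$, which is the case needed in the faithfulness theorem; non-separating ones when $[\g_1]=0$), any such orbit is dense in $\mathcal{PML}(\S)$ by minimality of the action, and continuity plus homogeneity of $i$ on $\mathcal{ML}(\S)$ then upgrade the equality to all simple closed curves, where the standard determination theorem of \cite{fathi1991travaux} applies. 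One small caution: the identification $\dim HF=i$ from the preceding lemma is off by $2$ when the two curves are isotopic, but your exclusion of $\g_2$ homologous to $\pm[\g_1]$, $\pm[f(\g_1)]$ already sidesteps this.
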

\begin{proof}
Let us take $\g_1$ to be the first homology generator $\a_1$ and let $\g_2$ be a curve $\g$ bounding the subsurface of genus one containing $\a_1.$
By the previous lemma we have $$HF(\a_1,\g)=\{0\},$$ which implies by hypothesis, $$HF(f(\a_1),\g)=\{0\}.$$
 If $\b_1$ is the dual curve to $\a_1$ among the first homology generators of $\Sigma$, we have $$\dim_{\mathbb{Z}}{HF(\a_1,\b_1)}=1.$$ This gives $$\dim_{\mathbb{Z}}{HF(f(\a_1),\b_1)}=1.$$ Together this implies that $f(\a_1)$ can be isotoped away from $\g$ and lies in the genus one part bounded by $\g$. Thus $f(\a_1)$ must be isotopic to the combination $\a_1+q\b_1$, where $q\in \mathbb{Z}.$ Let this combination be denoted by $\d_q$. In order to show that $f(\a_1)$ is isotopic to $\a_1$, it suffices to show that for $\a'_1$, which is a push-off of $\a_1$,
$$HF(f(\a_1),\a_1)\cong HF(\a'_1,\a_1)\neq HF(\d_q,\a_1) \,\,\,\,\,\,\,\,\,\mbox{if} \,\,\, q\neq 0.$$
The first isomorphism follows from the geometric intersection number, $i(f(\a_1),\a_1)=i(\a'_1,\a_1)=0.$ The second assertion is clear using the simple observation that $i(\d_q,\a_1)=q.$ We therefore obtain the desired isotopy between $f(\a_1)$ and $\a_1$ for a fixed $f\in MCG(\S).$
Similarly, a mapping class acts trivially on all essential closed curves $\g_1$ contributing to homology of the surface. This holds true, since we can always find a bounding curve $\g_2$ giving us an isotopy between $f(\g_1)$ and $\g_1$.
\end{proof}

Note that the above statement does not hold true for any separating curve. Indeed, two curves bounding homeomorphic subsurfaces are quasi-isomorphic but never isotopic. The above lemma gives a collection of curves $\a_1,\a_2, \ldots, \a_g\in H_1(\S)$ on which the mapping class group acts trivially. Apart from these curves the Lickorish generators $\g_1,\g_2,\ldots,\g_{g-1}$ are also considered in following paragraphs. We cut the surface along these curves in order to prove faithfulness of the action of $MCG(\S)$ on $Fuk(\S)$.
\begin{theorem}
Let $f\in MCG(\S)$ and suppose that the following isomorphism holds for any admissible essential closed curves $\g_1 \& \g_2$,
$$HF(f(\g_1),\g_2)\cong HF(\g_1,\g_2).$$
Then $f$ is isotopic to the identity in $MCG(\S).$
\end{theorem}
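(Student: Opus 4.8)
\emph{Proof proposal.} The plan is to feed the preceding lemma into the classical Alexander method. That lemma, together with the remark following its proof, shows that the hypothesis forces $f$ to fix, up to isotopy respecting orientations, \emph{every} essential non-separating simple closed curve of $\S$; in particular $f$ fixes up to oriented isotopy each curve of the Lickorish system $\mc C=\{\a_i\}_{i=1}^{g}\cup\{\b_i\}_{i=1}^{g}\cup\{\g_j\}_{j=1}^{g-1}$ of Figure \ref{fig:lickorish}. It is essential here that the degree bookkeeping in the key lemma pins down the \emph{oriented} isotopy class and not merely the unoriented one: a mapping class reversing the orientation of some $\a_i$ would carry $HF^0(\a_i,\b_i)$ to $HF^1(\a_i,\b_i)$, violating the graded isomorphism $HF(f(\a_i),\b_i)\cong HF(\a_i,\b_i)$. (This is also what excludes the hyperelliptic involution when $g=2$.)

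Next I would recall that $\mc C$ is a \emph{filling} system, i.e. $\S\setminus\bigcup_{c\in\mc C}c$ is a finite disjoint union of open discs; this is the standard geometric fact underlying Lickorish's generating set, obtained by cutting $\S$ first along the $\a_i$ and $\b_i$ and then along the $\g_j$. Put the curves of $\mc C$ in pairwise minimal position with no triple intersections. Since $f$ preserves the oriented isotopy class of each member of $\mc C$ and $\mc C$ fills $\S$, the Alexander method (see Farb--Margalit, \emph{A Primer on Mapping Class Groups}, Section 2.3) lets us isotope $f$ so that it carries $\bigcup_{c\in\mc C}c$ onto itself, fixes each oriented curve $c\in\mc C$ setwise with its orientation, and fixes the finite set of intersection points; a further isotopy supported in a neighbourhood of $\mc C$ then makes $f$ the identity on $\bigcup_{c\in\mc C}c$.

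With $f$ normalised in this way, cut $\S$ along $\mc C$. Because $\mc C$ fills, the result is a disjoint union of closed discs $D_1,\dots,D_r$; since $f$ is orientation preserving and is the identity on $\bigcup_{c\in\mc C}c$, it preserves each $D_\ell$ and restricts to a homeomorphism $D_\ell\to D_\ell$ that is the identity on $\partial D_\ell$. By Alexander's trick each such disc homeomorphism is isotopic rel boundary to $\mathrm{id}_{D_\ell}$; re-assembling these isotopies across the arcs of $\mc C$ produces an isotopy from $f$ to $\mathrm{id}_\S$, which is the theorem.

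The main obstacle is the middle step: upgrading ``$f$ fixes each $c\in\mc C$ individually, up to isotopy'' to ``$f$ fixes the whole configuration $\mc C$, with all its intersection points and orientations, after a single ambient isotopy''. This is exactly where the filling property of $\mc C$ is used and where the Alexander method does its work, and it is where one must keep careful track of orientations throughout, since only the oriented (equivalently, $\Z/2\Z$-graded) statement is available from Floer homology. The remaining ingredients---the filling property of the Lickorish curves and Alexander's trick for the disc---are standard.
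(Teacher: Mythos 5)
Your proposal follows essentially the same route as the paper: use the preceding lemma to see that $f$ fixes each Lickorish generator up to isotopy, cut $\S$ along this filling system to obtain a union of discs, and conclude with Alexander's trick. The paper performs the cutting in two stages (first along the $\a_i$, then along the $\b_i$ and $\g_j$) rather than invoking the Alexander method once for the whole configuration, but the underlying argument is the same, and your explicit attention to the normalisation step and to orientations is a welcome tightening of what the paper leaves implicit.
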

\begin{proof}
The idea is to locally reduce to a union of discs and apply the Alexander's trick.
Following the discussion preceding the theorem, we cut the surface along the curves $\a_1, \a_2,\ldots,\a_g$ on which $f$ acts in a trivial manner, obtaining a new surface $\S_0$. This $\S_0$ is in fact a disc with boundary and $2g-1$ holes.
\begin{figure}
\begin{picture}(0,90)

 \put(-120,0){\def\svgwidth{0.63\textwidth}
 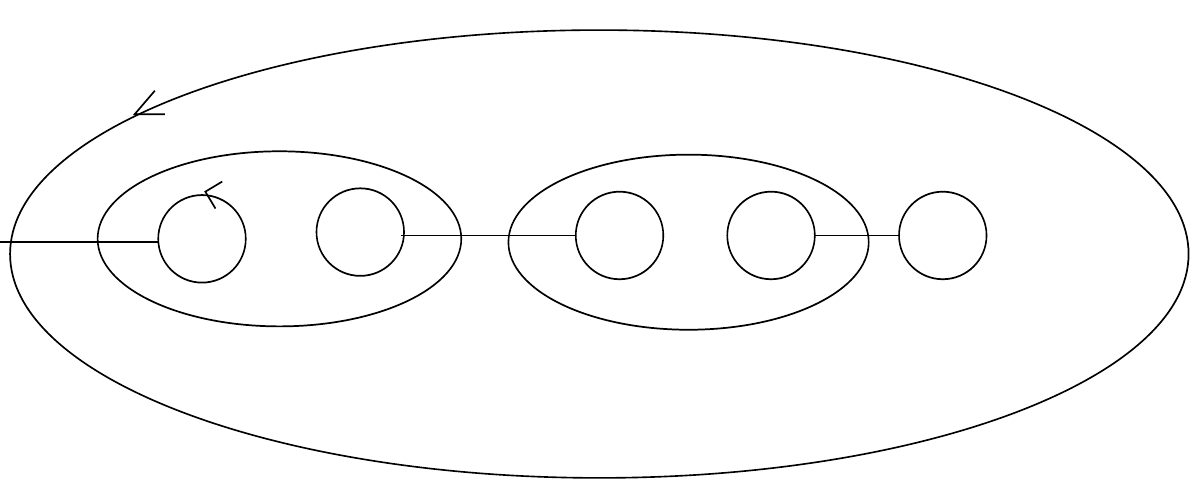} 

\end{picture}

\caption{Cutting along curves $\a_1, \a_2,\ldots,\a_g$}
\label{fig:S_0}
\end{figure}

In $\S_0$, the dual curves $\b_1,\b_2, \ldots, \b_g$ in $H_1(\S)$ can be seen as arcs joining consecutive holes and one joining a hole to the boundary. The remaining Lickorish generators, namely $\g_1,\g_2,\ldots,\g_{g-1}$, encircle pairs of holes in $\S_0$ (see Figure \ref{fig:S_0}). The mapping $f$ restricts to a mapping class $f_0$ in $(\S_0,\partial\S_0)$ fixing the boundary pointwise. Since $f$ acts trivially on the collection of curves $\b_1,\ldots,\b_g,\g_1,\ldots,\g_{g-1}$, by the previous lemma,
$f_0$ also acts trivially on this collection.

\begin{figure}
\begin{picture}(0,90)

 \put(-85,0){\def\svgwidth{0.5\textwidth}
 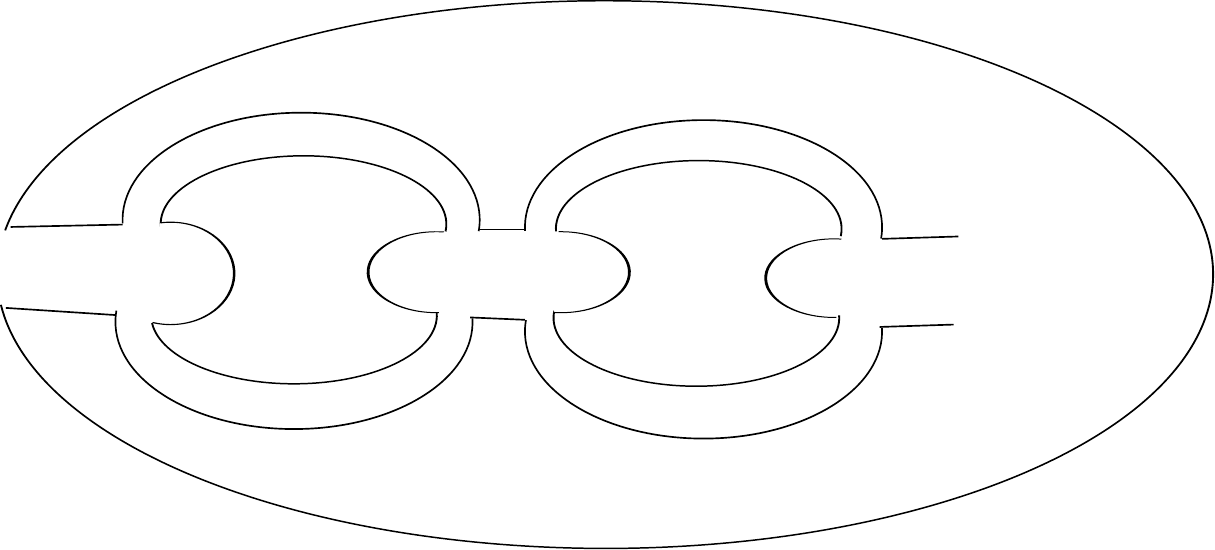} 
\end{picture}

\caption{Cutting along curves $\b_1,\ldots, \b_g,\g_1,\ldots,\g_{g-1}$}
\label{fig:S_1}
\end{figure}
Repeating the same argument as above, we cut along all of these curves (see Figure \ref{fig:S_1}). As a result we obtain a finite union of discs. These are $g$ in number. The mapping class $f_0$ now restricts to $f_1$ acting on this union and fixing the boundary (which is a union of curves) pointwise.
Now each disc in this union has a trivial mapping class group and we are done.
\end{proof}
\begin{appendix}
\section{Maslov index and rigid bigons}\label{appendix_bigons}
The boundary map in lagrangian Floer theory counts rigid holomorphic discs with prescribed boundary conditions.
 Here rigid means that up to parametrisation we get isolated solutions, which is controled by a Maslov index.
 We will give a winding number formula for this Maslov index and show that this counting coincides
 with our bigons in section \ref{Floer}.
 
Let
$(\g_1,\g_2)$ be an admissible pair of curves in the surface $\S$.
For points $p,q\in \g_1\cap \g_2$, a Whitney disc $u$ from $p$ to $q$
is a smooth map from 
the half disc \\$ \mathbb{D}=\{z\in \mathbb{C}\ ,\ |z|=1 \text{ and } \Im(z)\geq 0\}$ to $\S$, which is mapped as follows:
\begin{itemize}
\item
the point $1,-1\in \mathbb{D}$ are mapped to the points $p,q$ respectively,
\item
the clockwise and anti-clockwise boundary paths from $-1$ to $1$ are mapped to subsets of $\g_1$ and $\g_2$ respectively.
\end{itemize}
 The expected dimension of the space of holomorphic discs homotopic to $u$ is the Maslov index $\mu(u)$.
Let $X$ be a non singular vector field on $\S \setminus z$, where $z\in \S\setminus (\g_1\cup\g_2)$ is a generic point for $u$.  Using $X$, the tangent space of $\S\setminus z$ is trivialized. 
We denote by $n_z(u)$ the multiplicity of $u$ at $z$, and by $m_X(u)$ the degree of the loop in $\RP1$ defined in the trivialisation by the tangent line traversing a connected path along the two arcs of the boundary of $u$ in $\RP1$ avoiding the line $\delta$. Here the Maslov cycle $\delta\in \RP1$ is an arbitrary line distinct from the tangent vectors at corners.

\begin{prop}
The Maslov index is given by the formula
$$\mu(u)=m_X(u)+(4-4g)n_z(u)
\ .$$
\end{prop}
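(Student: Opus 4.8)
The plan is to compute the Maslov index $\mu(u)$ via a doubling/gluing argument, expressing it in terms of two geometric quantities: a boundary winding number and the multiplicity at the reference point. First I would recall the standard setup: $\mu(u)$ is the index of the linearized Cauchy–Riemann operator $\bar\partial_u$ on the half-disc $\mathbb D$ with Lagrangian boundary conditions given by $T\g_1$ along one arc and $T\g_2$ along the other, together with the corner conditions at $p$ and $q$. Using the trivialization of $T(\S\setminus z)$ furnished by the nonvanishing vector field $X$, the boundary conditions become a loop of Lagrangian lines in $\mathbb R^2$, i.e.\ a path in $\RP1$, and the Maslov index decomposes into a contribution from the \emph{bulk} (the multiplicity $n_z(u)$, recording how $u$ wraps over the puncture $z$ of the trivialization) and a contribution from the \emph{boundary} (the winding $m_X(u)$ of the tangent line of $\partial u$ relative to $X$, computed away from the Maslov cycle $\delta$).

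The key steps, in order: (1) set up the linearized operator and the Riemann–Roch formula for the half-disc with corners, recording that for a trivial bundle with constant Lagrangian boundary the index is $1$ (a point, i.e.\ the constant bigon); (2) establish additivity of $\mu$ under gluing $2$-chains along boundary segments — this is the analogue of the additivity of the Euler measure used in Section~\ref{infinity} — so that it suffices to verify the formula on a spanning set of homotopy classes; (3) handle the boundary term: for a disc $u$ supported away from $z$, show $\mu(u) = m_X(u)$ by reducing to a genuine winding-number count of the tangent line of $\partial u$ against the chosen trivialization, matching the combinatorial bigon count of Section~\ref{Floer} (this is where the identification with the differential $d$ in Lagrangian Floer theory is pinned down); (4) handle the bulk term: compute $\mu$ for a class that covers a neighborhood of $z$ once with trivial boundary, which by the standard sphere-bubble / branched-cover index computation contributes $2\langle c_1(T\S), [S^2]\rangle = 2\chi(\S) = 4-4g$ per unit of multiplicity; (5) combine (3) and (4) via the additivity of step (2) to get $\mu(u) = m_X(u) + (4-4g)n_z(u)$, noting that $n_z(u)$ does not depend on the generic choice of $z$ within a connected component and $m_X(u)$ is the degree of the loop in $\RP1$ avoiding $\delta$ as described.

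The main obstacle I expect is step (3)–(4) bookkeeping: getting the normalization exactly right so that the constant bigon has the correct index and so that the $\RP1$-winding $m_X(u)$ — which is defined by traversing the two boundary arcs and avoiding the Maslov cycle $\delta$ — agrees, including corner contributions at $p$ and $q$, with the Lagrangian intersection index contributions in the Riemann–Roch formula. One must be careful that the corners are \emph{convex} (the local models near $p$ and $q$ singled out before Definition~\ref{polygons}) so that each corner contributes the expected quarter-turn, and that the factor $4-4g = 2\chi(\S)$ rather than $\chi(\S)$ appears because a sphere's worth of curvature is seen twice by the doubled index. Once the normalization is fixed on the constant bigon and on one bulk-covering class, additivity closes the argument, and comparison with the combinatorial count of rigid bigons in Section~\ref{Floer} (where $\mu(u)=1$ singles out exactly the immersed bigons) shows the two descriptions of the Floer differential coincide.
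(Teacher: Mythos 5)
Your overall strategy --- normalize $\mu$ on a set of generators and propagate by additivity of the index under addition of $2$-chains --- is genuinely different from the paper's argument. The paper instead quotes that $\mu(u)$ equals the degree in $\RP1$ of the boundary Lagrangian loop computed in a trivialization of $u^*(T\S)$, passes to the universal cover $p:\widetilde\S\to\S$ where $T\widetilde\S$ is globally trivial so that $\mu(u)=\mu(\tilde u)=m_{\tilde Y}(\tilde u)$ for a nonvanishing field $\tilde Y$, and then compares the two trivializations $\tilde Y$ and $\tilde X=p^*(X)$: the discrepancy is localized at the points of $p^{-1}(z)$ covered by $\tilde u$, each contributing twice the index of the zero of $X$, i.e.\ $2(2g-2)$, which is exactly the correction term $(4-4g)n_z(u)$. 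Your route is viable in principle, but the two steps that carry all the weight are only asserted, and one of them is misstated.

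Concretely: (a) the additivity of $\mu$ under gluing of $2$-chains, and the fact that $\mu$ depends only on the $2$-chain and not on the representing map, are precisely the nontrivial inputs from Viterbo--Maslov theory; without them you cannot reduce to generators, and once you grant them the paper's direct trivialization comparison is already available and shorter. (b) Your bulk generator is misidentified: on a closed surface of genus $g\geq 2$ there is no sphere to bubble, and the pairing $\langle c_1(T\S),[S^2]\rangle$ does not parse. The correct statement is that adding the fundamental class $[\S]$ to the $2$-chain of $u$ leaves $m_X$ unchanged, increases $n_z$ by one, and increases $\mu$ by $2\langle c_1(T\S),[\S]\rangle=2\chi(\S)=4-4g$. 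Moreover $u-n_z(u)[\S]$ is only a $2$-chain with vanishing coefficient on the region containing $z$ (it may have negative coefficients elsewhere), not an actual disc avoiding $z$, so your step (3) must also be phrased homologically rather than for ``discs supported away from $z$''. Finally, your normalization ``the constant bigon has index $1$'' is off: a constant map is not a bigon from $p$ to $q$; the correct sanity check, made at the end of the paper's proof, is that a small embedded bigon around $z$ has $m_X(u)=4g-3$ and hence $\mu(u)=1$.
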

\begin{proof}
Following \cite{viterbo,floer1988index,robbin_salamon} the Maslov index $\mu(u)$ is equal to the Viterbo-Maslov index which can also be defined as  the degree of a loop in $\RP1$, but in a trivialisation of  the pull back $u^*(T\S)$. Let $p: \widetilde \S \rightarrow \S$ be the universal cover. We consider the lifting of $u$, $\tilde u: \mathbb{D}\rightarrow \widetilde \S$, $u=p\circ \tilde u$ of $u$. 
The tangent space $T\widetilde \S$ being trivial, the Maslov index $\mu(u)=\mu(\tilde u)$ is equal to $m_{\tilde Y}(\tilde u)$ where $\tilde Y$ is a non trivial vector field on
$\widetilde \S$, and $m_Y(\tilde u)$ is the degree in $\RP1$ analogous to $m_X(u)$. Let $\tilde X=p^*(X)$ be the pull back of $X$.
The difference $m_{\tilde X}(\tilde u)- m_{\tilde Y}(\tilde u)$ comes from the singularities of $\tilde X$ at points in $p^{-1}(z)$ where  each one contributes $2(2g-2)$.
We finally get $$\mu(u)=\mu(\tilde u)=m_{\tilde Y}(\tilde u)=m_{\tilde X}(\tilde u)+(4-4g)n_z(u)=m_X(u)+(4-4g)n_z(u)\ .$$
Note that when $u$ is a small embedded bigon around $z$, we have $m_X(u)=4g-3$ which gives the expected value $\mu(u)=1$.
\end{proof}
The boundary map in a Floer complex $CF(\gamma_1,\gamma_2)$ counts up to reparametrisation holomorphic discs with required boundary condition and Maslov index equal to $1$.
It is proved in \cite[Ch12]{de2014combinatorial} that for embedded  curves $\gamma_1$, $\gamma_2$ this counting is equal to the combinatorial one.
Using the universal cover we extend this result to our settings. 
\begin{prop}\label{Fbigon}
If  $\gamma_1$, $\gamma_2$ are admissible unobstructed immersed curves, then for any generators $p,q\in CF(\gamma_1,\gamma_2)$
the boundary of the Floer complex is combinatorial and given by the number of immersed bigons.
\end{prop}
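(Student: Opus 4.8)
The plan is to reduce Proposition \ref{Fbigon} to the embedded case, which is \cite[Ch.~12]{de2014combinatorial}, by passing to the universal cover $p:\widetilde\S\to\S$ --- the one place where unobstructedness is used in an essential way. Equip $\widetilde\S$ with the pulled-back complex structure, so that $p$ is a holomorphic covering, and fix a generic interior point. Since $\g_1,\g_2$ are unobstructed, every lift of $\g_i$ is a properly embedded line in $\widetilde\S$, and any two lifts that meet do so transversally because $\g_1,\g_2$ are transverse. Recall the two sides we wish to compare: the analytic Floer differential counts, up to reparametrisation, $J$-holomorphic Whitney discs $u:\mathbb{D}\to\S$ from $p$ to $q$ of Viterbo--Maslov index $\mu(u)=1$, while the differential of Section \ref{Floer} counts immersed bigons from $p$ to $q$, and by the remark after Figure \ref{bigon} these are precisely the smooth lunes of \cite{de2014combinatorial}.

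First I would establish the bijection. Given a $J$-holomorphic Whitney disc $u$ from $p$ to $q$ with $\mu(u)=1$, lift it to $\widetilde u:\mathbb{D}\to\widetilde\S$ (the half-disc is simply connected); its two boundary arcs then lie on lifts $\widetilde\g_1,\widetilde\g_2$, which are embedded transverse lines, and its two corners are distinct, projecting to the distinct points $p,q$. The map $\widetilde u$ is $J$-holomorphic with compact image, and by the proof of the preceding proposition its Viterbo--Maslov index equals $\mu(u)=1$, the index being computed from the linearised Cauchy--Riemann operator and hence unchanged under the local diffeomorphism $p$. The structure theorem of \cite[Ch.~12]{de2014combinatorial} --- whose proof is local around the image of the disc, so that it applies verbatim with embedded arcs in place of embedded circles --- shows $\widetilde u$ is an immersion whose image is a lune; projecting, $u$ is an immersed bigon from $p$ to $q$. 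Conversely, an immersed bigon $u$ in $\S$ lifts to an immersed bigon $\widetilde u$ in $\widetilde\S$ with boundary on embedded transverse lines; a lune has Maslov index $1$ by \cite{de2014combinatorial}, and pulling the complex structure of $\widetilde\S$ back along the immersion $\widetilde u$ and uniformising the half-disc yields, after reparametrisation, a $J$-holomorphic representative, whose composition with $p$ is a $J$-holomorphic disc in $\S$ of Maslov index $1$. That these constructions are mutually inverse, up to reparametrisation on the analytic side and homotopy through lunes on the combinatorial side, is again \cite[Ch.~12]{de2014combinatorial} transported through $p$; the admissibility hypothesis guarantees (the corollary following Proposition \ref{compact}) that both counts are finite.

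Finally one must match signs: the coefficient $(-1)^{s(u)}$ of the combinatorial differential, with $s(u)$ counting marked points on $\partial u$ together with orientation disagreements along $\g_2$, must coincide with the sign attached to the corresponding holomorphic disc in the oriented moduli space. Since marked points, orientations and local quadrant conditions all lift faithfully, $s(\widetilde u)=s(u)$, so it suffices to compare the two conventions for embedded arcs, which is the orientation computation at the corners and along the boundary carried out as in \cite{de2014combinatorial,seidel2008fukaya}. I expect the main obstacle to be exactly this last bookkeeping, together with confirming carefully that the de Silva--Robbin--Salamon analysis --- automatic regularity in real dimension two, unique continuation, and the identification of a Maslov-$1$ disc with a lune --- is genuinely local, hence unaffected both by the non-compactness of $\widetilde\S$ (harmless here, as every disc we transport already has compact image) and by the replacement of embedded circles with embedded lines; granting this, the proposition follows.
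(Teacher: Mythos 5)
Your argument is essentially the paper's proof: lift a Maslov-index-$1$ holomorphic disc to the universal cover (where the index is preserved), invoke de Silva--Robbin--Salamon's Theorem~12.1 to identify it with a lune, and use the Riemann mapping theorem for the converse. The additional remarks you make about signs, local regularity, and embedded lines vs.\ circles are sensible amplifications, but the core reduction is the same.
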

\begin{proof}
If a disc $u: \mathbb{D}\rightarrow \S$ with required boundary has Maslov index equal to $1$ then its lifting $\tilde u$ to the universal  cover also has Maslov
index equal to $1$. Theorem 12.1 in \cite{de2014combinatorial} states that this is equivalent to $\tilde u$ being homotopic to an immersed bigon (a lune in their terminology).
This shows that all discs involved in the Floer boundary map correspond to immersed bigons. 
Conversely, by the Riemann mapping theorem every bigon has a unique holomorphic representative up to reparametrisation.
\end{proof}

\section{Rigid polygons}\label{appendix_polygons}
The previous result extends to polygons.
Given an admissible sequence of $n$ curves, $n\geq 2$, $\gamma_1,\dots,\gamma_n$, the corresponding higher product in Floer-Fukaya theory is defined by counting rigid holomorphic polygons with required boundary condition. The rigidity is also controlled by the index of the Cauchy-Riemann operator with lagrangian boundary conditions, which should be equal to $3-n$.
 For polygons with strip ends, the relation with the Maslov index
is given in \cite[Lemma 11.7]{seidel2008fukaya}. Following \cite{sarkar}, a combinatorial formula for the index is 
$$Ind(u)=2e(u)-\frac{n-2}{2}\ ,$$
where $e(u)$ is the Euler measure.
\begin{prop}
The number of rigid holomorphic $n$-gons is equal to the number of immersed convex $n$-gons.
\end{prop}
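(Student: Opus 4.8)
The plan is to follow the strategy of Proposition \ref{Fbigon}, lifting to the universal cover and reducing to a combinatorial description of minimal-index holomorphic polygons. First I would record the translation between rigidity and Euler measure. By the dimension count of Section \ref{infinity}, a holomorphic $n$-gon contributing to $m_{n-1}$ is rigid precisely when the index of its Cauchy--Riemann operator equals $3-n$; combined with Sarkar's formula $Ind(u)=2e(u)-\frac{n-2}{2}$ quoted above, this is equivalent to $e(u)=1-\frac{n}{4}$, which is exactly the Euler measure of an embedded convex $n$-gon.

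Next I would pass to the universal cover $p\colon\widetilde{\S}\to\S$. Since the curves $\g_1,\dots,\g_n$ are unobstructed, every component of $p^{-1}(\g_i)$ is a properly embedded line, and any holomorphic $n$-gon $u\colon D\to\S$ with convex corners lifts to a holomorphic $\tilde u\colon D\to\widetilde{\S}$ with boundary on these embedded lifts, with the same multiplicities, Euler measure and index. This reduces the statement to the planar situation of a holomorphic $n$-gon with boundary on finitely many properly embedded arcs. There I would invoke the polygon analogue of Theorem 12.1 of \cite{de2014combinatorial}, made precise through the index computation of \cite{sarkar} and \cite[Lemma 11.7]{seidel2008fukaya}: a holomorphic $n$-gon with convex corners realizing the minimal index $3-n$ is homotopic, through maps respecting the prescribed boundary conditions, to an immersed $n$-gon. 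The underlying reason is that the Euler measure is additive and a holomorphic map can only be branched: for an $n$-gon with convex corners one has $e(u)\le 1-\frac{n}{4}$, and an interior or boundary branch point strictly lowers the value, so attaining the extremal Euler measure $1-\frac{n}{4}$ forces $\tilde u$ to be an unbranched immersion. Hence every rigid holomorphic $n$-gon comes from an immersed convex $n$-gon.

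For the converse, I would argue as in the bigon case using the Riemann mapping theorem: an immersed convex $n$-gon, viewed as a map from $D$ with the $n$ corners as marked points, determines a conformal structure on $D$ making it holomorphic, uniquely up to the automorphisms of $D$ fixing three marked points; by the Euler-measure computation its index is $3-n$, so it is rigid and contributes to $m_{n-1}$. The two assignments are mutually inverse up to homotopy and reparametrization, which yields the claimed equality of counts.

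The main obstacle is exactly the combinatorial input invoked on the universal cover --- that a minimal-index holomorphic $n$-gon with embedded boundary arcs and convex corners is homotopic to an immersed convex $n$-gon. For $n=2$ this is Theorem 12.1 of \cite{de2014combinatorial}; for general $n$ one must either cite the corresponding polygon statement or prove it directly from the upper bound $e(u)\le 1-\frac{n}{4}$ for positive holomorphic $n$-gons with convex corners, the delicate point being the control of branch points along the boundary arcs and at the corners. Once that extremality statement is in hand, the rest of the argument is the same bookkeeping with the Riemann mapping theorem as in the proof of Proposition \ref{Fbigon}.
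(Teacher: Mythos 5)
Your converse direction (an immersed convex $n$-gon gives, via the Riemann mapping theorem, a unique rigid holomorphic representative) agrees with the paper and is fine. The forward direction has two genuine gaps. First, you restrict from the outset to holomorphic $n$-gons \emph{with convex corners} ("any holomorphic $n$-gon $u$ with convex corners lifts\dots", "a holomorphic $n$-gon with convex corners realizing the minimal index\dots"), so you never show that a rigid holomorphic $n$-gon must in fact have convex corners -- but convexity is part of the conclusion, not a hypothesis. The paper treats this separately: a non-convex corner means two adjacent boundary arcs enter the polygon, and one can then produce a one-parameter family of holomorphic discs with the same boundary conditions, so rigidity forces convexity. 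Second, the step you yourself label "the main obstacle" is precisely the content of the proposition. Theorem 12.1 of \cite{de2014combinatorial} is stated only for lunes ($n=2$); there is no polygon analogue to cite, and your proposed substitute -- an extremality bound on the Euler measure with branch points as the defect -- is left unproven. Moreover the inequality as you state it goes the wrong way: for the positive $2$-chain induced by a holomorphic $n$-gon with convex corners, Riemann--Hurwitz gives $e(u)=1-\frac{n}{4}+b$ with $b\geq 0$ counting branching, so branch points \emph{increase} $e(u)$ and rigidity ($b=0$) is the minimal, not maximal, value. A proof whose central step is deferred to an uncited or unproved lemma is not yet a proof.

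For comparison, the paper closes the "rigid implies immersed" step without lifting to the universal cover and without an index inequality: it glues the components of $\S\setminus(\cup_i\g_i)$, with the multiplicities prescribed by $u$, into an abstract surface $S$ carrying a canonical immersion $S\to\S$; the map $u$ factors through $S$, the Riemann mapping theorem identifies $\mathbb{D}$ biholomorphically with $S$, and hence $u$ itself is an immersion. If you prefer your Euler-measure route, you must actually prove the Riemann--Hurwitz identity above for holomorphic polygons (including control of boundary and corner branching) and then observe that index $3-n$ forces $b=0$; that is a legitimate alternative, but it is exactly the work your proposal postpones.
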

\begin{proof}
By the Riemann mapping theorem, each immersed convex $n$-gon is represented by a unique holomorphic map with the required boundary conditions. It remains to show that there are no other rigid holomorphic polygons. Let $u: \mathbb{D}\to \S$ be a holomorphic $n$-gon. This has a non-convex corner if and only if two adjacent curves on the boundary go inside the $n$-gon. In this situation we can build a $1$-dimensional space of homotopic holomorphic discs with the same prescribed boundary conditions. It follows that rigidity implies convexity. 

Moreover, rigid implies immersed. If we consider the surface $S$ built,  by identification on the boundary, from the components of $\S\setminus (\cup_i\gamma_i)$ with multiplicities prescribed by $u$. The map $u$ factors through $S$ which is an abstract disc. Then by the Riemann mapping theorem $\mathbb{D}$ is biholomorphic to $S$. This equivalence composed with the canonical immersion $S\to \S$ is the map $u $ which as a result is also an immersion.

\end{proof}

\end{appendix}

\bibliographystyle{amsalpha}
\bibliography{references}

\providecommand{\bysame}{\leavevmode\hbox to3em{\hrulefill}\thinspace}
\providecommand{\MR}{\relax\ifhmode\unskip\space\fi MR }
\providecommand{\MRhref}[2]{%
  \href{http://www.ams.org/mathscinet-getitem?mr=#1}{#2}
}
\providecommand{\href}[2]{#2}
\begin{thebibliography}{DSRS14}

\bibitem[Abo08]{abouzaid2008fukaya}
Mohammed Abouzaid, \emph{On the {F}ukaya categories of higher genus surfaces},
  Advances in Mathematics \textbf{217} (2008), no.~3, 1192--1235.

\bibitem[BN07]{bar2007fast}
Dror Bar-Natan, \emph{Fast khovanov homology computations}, Journal of Knot
  Theory and Its Ramifications \textbf{16} (2007), no.~03, 243--255.

\bibitem[Bou10]{bouchair2009cat}
Abderrahmane Bouchair, \emph{Cat{\'e}gorification des coefficients de la
  matrice de la repr{\'e}sentation de {B}urau}, Annales de la facult{\'e} des
  Sciences de Toulouse \textbf{XIX} (2010), no.~3-4, 811--829.

\bibitem[Chi72]{chillingworth1972windingI}
D.R.J. Chillingworth, \emph{Winding numbers on surfaces, i}, Mathematische Annalen
  \textbf{196} (1972), no.~3, 218--249.

\bibitem[DSRS14]{de2014combinatorial}
Vin De~Silva, Joel Robbin, and Dietmar Salamon, \emph{Combinatorial floer
  homology}, vol. 230, American Mathematical Society, 2014.

\bibitem[Efm11]{alex2009proof}
Aleksander Efimov, \emph{A proof of Kontsevich-Soibelman conjecture}, vol. 202, Sbornik Math. 2011,
  pp.~527--546.

\bibitem[Flo88]{floer1988index}
Andreas Floer, \emph{A relative morse index for the symplectic action}, Comm.
  Pure Appl. Math. \textbf{41} (1988), no.~4, 393--407.

\bibitem[FLP91]{fathi1991travaux}
Albert Fathi, Francois Laudenbach, and Valentin Po{\'e}naru, \emph{Travaux de
  thurston sur les surfaces, soci{\'e}t{\'e} math{\'e}matique de france, paris,
  1991, s{\'e}minaire orsay, reprint of travaux de thurston sur les surfaces,
  soc. math. france, paris, 1979 [mr 82m: 57003]}, Ast{\'e}risque (1991),
  no.~66-67.

\bibitem[Fuk93]{fukaya1993morse}
Kenji Fukaya, \emph{Morse homotopy, a-infinity category and floer homologies},
  Proceeding of Garc Workshop on Geometry and Topology, Seoul National Univ,
  1993.

\bibitem[Fuk01]{fukaya2001floer}
\bysame, \emph{Floer homology and mirror symmetry i}, AMS IP STUDIES IN
  ADVANCED MATHEMATICS \textbf{23} (2001), 15--44.

\bibitem[Kon94]{kontsevich1994homological}
Maxim Kontsevich, \emph{Homological algebra of mirror symmetry}, arXiv preprint
  alg-geom/9411018 (1994).

\bibitem[KS01]{kontsevich2001homological}
Maxim Kontsevich and Yan Soibelman, \emph{Homological mirror symmetry and torus
  fibrations}, Symplectic geometry and mirror symmetry, World Scientific, 2001,
  pp.~203--263.

\bibitem[KS02]{khovanov2002quivers}
Mikhail Khovanov and Paul Seidel, \emph{Quivers, floer cohomology, and braid
  group actions}, Journal of the American Mathematical Society \textbf{15}
  (2002), no.~1, 203--271.

\bibitem[Lic64]{lickorish1964finite}
William~BR Lickorish, \emph{A finite set of generators for the homeotopy group
  of a 2-manifold}, Mathematical Proceedings of the Cambridge Philosophical
  Society, vol.~60, Cambridge University Press, 1964, pp.~769--778.

\bibitem[Lip06]{lipschitz}
Robert Lipschitz, \emph{A cylindrical reformulation of heegaard-floer
  homology}, Geometry and Topology \textbf{10} (2006), 955–--1096.

\bibitem[RS93]{robbin_salamon}
Joel Robbin and Dietmar Salamon, \emph{The maslov index for paths}, Topology
  \textbf{32} (1993), no.~4, 827--844.

\bibitem[Sar11]{sarkar}
Sucharit Sarkar, \emph{Maslov index formula for whitney $n$-gons}, Journal of
  Symplectic Geometry \textbf{9} (2011), no.~2, 251--270.

\bibitem[Sei08]{seidel2008fukaya}
Paul Seidel, \emph{Fukaya categories and picard-lefschetz theory}, vol.~10,
  European Mathematical Society, 2008.

\bibitem[Sei11]{seidel2011homological}
\bysame, \emph{Homological mirror symmetry for the genus two curve}, Journal of
  Algebraic Geometry \textbf{20} (2011), no.~4, 727--769.

\bibitem[Vit87]{viterbo}
Claude Viterbo, \emph{Intersection de sous-variétés lagrangiennes,
  fonctionnelles d’action et indice des systèmes hamiltoniens}, Bulletin de
  la Soci{\'e}t{\'e} Math{\'e}matique de France \textbf{115} (1987), 361--390.

\end{thebibliography}

\end{document}